\documentclass[12pt,reqno]{amsart}
\usepackage[T1]{fontenc}
\usepackage[utf8]{inputenc}
\usepackage{lmodern}
\usepackage[margin=1in]{geometry}
\usepackage{amsmath,amssymb,mathtools,microtype}
\usepackage{booktabs,array,longtable,enumitem,needspace,etoolbox}
\usepackage{tikz,tikz-cd,listings}
\usetikzlibrary{arrows.meta,positioning}
\usepackage[bookmarksnumbered,hidelinks]{hyperref}
\hypersetup{pdftitle={Recognizing oriented knots from finite quotients with one marked element},
 pdfauthor={Indranil Biswas and Shrinit Singh}}
\DeclareMathOperator{\Inn}{Inn}

\DeclareMathOperator{\GL}{GL}
\DeclareMathOperator{\PSL}{PSL}
\DeclareMathOperator{\diag}{diag}
\DeclareMathOperator{\lk}{lk}

\DeclareMathOperator{\im}{im}
\DeclareMathOperator{\id}{id}

\DeclareMathOperator{\coeff}{coeff}

\newcommand{\R}{\mathbb R}

\newcommand{\Z}{\mathbb Z}
\newcommand{\C}{\mathbb C}
\newcommand{\Zh}{\widehat{\mathbb Z}}
\newcommand{\cl}[1]{\overline{#1}}
\newcommand{\Pk}[1]{\mathcal P(#1)}
\newcommand{\Qk}[1]{\mathcal Q(#1)}
\newcommand{\Rk}[1]{\mathcal R(#1)}
\newcommand{\Wk}[1]{\mathcal W(#1)}
\newcommand{\Qwk}[1]{\mathcal Q_w(#1)}
\newcommand{\into}{\hookrightarrow}
\newcommand{\onto}{\twoheadrightarrow}
\numberwithin{equation}{section}
\setlist[enumerate,1]{label=\textup{(\roman*)},leftmargin=2.3em,itemsep=3pt}
\setlist[itemize,1]{leftmargin=2em,itemsep=2pt}
\newtheorem{thm}{Theorem}[section]
\newtheorem{lem}[thm]{Lemma}
\newtheorem{prop}[thm]{Proposition}
\newtheorem{cor}[thm]{Corollary}
\newtheorem{maintheorem}{Theorem}

\theoremstyle{definition}

\theoremstyle{remark}

\BeforeBeginEnvironment{thm}{\Needspace{5\baselineskip}}
\BeforeBeginEnvironment{lem}{\Needspace{5\baselineskip}}
\BeforeBeginEnvironment{prop}{\Needspace{5\baselineskip}}
\BeforeBeginEnvironment{cor}{\Needspace{5\baselineskip}}
\BeforeBeginEnvironment{maintheorem}{\Needspace{6\baselineskip}}
\BeforeBeginEnvironment{defn}{\Needspace{4\baselineskip}}
\BeforeBeginEnvironment{ex}{\Needspace{4\baselineskip}}
\BeforeBeginEnvironment{rem}{\Needspace{4\baselineskip}}
\title[One marked element and oriented knots]
 {Recognizing oriented knots from finite quotients
 with one marked element}
 
\author{Indranil Biswas}
\address{Department of Mathematics, Shiv Nadar Institution of Eminence Deemed to be University, Dadri, Greater Noida 201314, India}
\email{indranil.biswas@snu.edu.in, indranil29@gmail.com}

\author{Shrinit Singh}
\address{International Centre for Theoretical Sciences--Tata Institute
of Fundamental Research, Bangalore 560089, India}
\email{shrinit.singh@icts.res.in, shrinitsingh@gmail.com}
\date{}
\subjclass[2020]{57K10, 20E18, 57K30}
\keywords{Oriented knot, profinite completion, peripheral system,
JSJ decomposition, prime decomposition, colouring polynomial, marked element}
\begin{document}

\begin{abstract}
For an oriented tame knot $K$, let $G_K$ be its group, with positive meridian $\mu_K$ and preferred longitude $\lambda_K$. We prove that the profinite completion $\widehat{G_K}$, marked by the conjugacy class of $\mu_K\lambda_K$, determines $K$ up to oriented equivalence. In fact, every isomorphism preserving this mark preserves the meridian and longitude with the same conjugator. The proof combines Xu's genuineness theorem with profinite JSJ theory and simultaneous boundary realization, and applies to satellite and composite knots. Consequently, Eisermann's colouring polynomials over all finite groups determine oriented knots.
\end{abstract}

\maketitle
\raggedbottom

\section{Introduction}\label{sec:introduction}

Let $K$ be an oriented tame knot in the oriented $3$-sphere. Choose a
closed tubular neighbourhood $\nu K$, a solid torus with core $K$, and put
\[
 E_K\ =\ S^3\setminus\operatorname{int}(\nu K),\qquad G_K\ =\ \pi_1(E_K).
\]
The positive meridian $\mu_K$ bounds a normal disc meeting $K$ positively
once. The preferred longitude $\lambda_K$ follows $K$ positively once and
has linking number zero with it. Transporting both boundary curves to a
basepoint along one path gives commuting elements of $G_K$; changing
that path conjugates both by the same element. Our conventions are
specified in Section~\ref{sec:topology}.

The discrete peripheral system $(G_K,\,\mu_K,\,\lambda_K)$ determines the
oriented knot, by Waldhausen's realization theorem and meridional filling
\cite[Corollary~6.5]{Waldhausen}; see also \cite[Section~2]{Eisermann}.
The abstract group does not suffice: the square and granny knots have
isomorphic groups. We ask how much of this peripheral information must
be retained when the group is observed through its finite quotients.

Write
\[
 \widehat G\,\ =\,\ \varprojlim_{N\trianglelefteq_fG}G/N,
\]
where $N\,\trianglelefteq_f\, G$ means that $N$ is a normal subgroup of finite index.
Every homomorphism from $G$ to a finite group extends uniquely to a
continuous homomorphism from $\widehat G$. Knot groups are residually
finite, so they embed in their completions; we use the same symbols for
peripheral elements and their images. Define
\[
 \Pk K\ :=\ (\widehat{G_K},\,\mu_K,\,\lambda_K),\quad
 w_K\ :=\ \mu_K\lambda_K,\quad \Wk K\ :=\ (\widehat{G_K},\, w_K).
\]
An isomorphism of ordered markings is a continuous isomorphism $\Phi$
for which
\begin{equation}\label{eq:exact-marking}
 \Phi(\mu_K)\ =\ g\mu_Lg^{-1},\quad
 \Phi(\lambda_K)\ =\ g\lambda_Lg^{-1}
\end{equation}
for one $g\,\in\,\widehat{G_L}$. An isomorphism of one-element markings
requires $\Phi(w_K)\,=\,gw_Lg^{-1}$. 
Throughout, $K\,\simeq\, L$ means an orientation preserving homeomorphism of
$S^3$ taking $K$ to $L$ and preserving their knot directions, equivalently
they are oriented ambient isotopic. Our first result is a recognition theorem with both orientations retained.

\begin{maintheorem}\label{thm:main}
For oriented tame knots $K,\,L\,\subset\, S^3$, the following three statements are
equivalent:
\begin{enumerate}
\item $K\ \simeq\ L$;

\item $\Wk K\ \cong\ \Wk L$;

\item $\Pk K\ \cong\ \Pk L$.
\end{enumerate}
\end{maintheorem}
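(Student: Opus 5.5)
The easy implications come first. For (i)$\Rightarrow$(iii), an oriented equivalence $K\simeq L$ restricts to an orientation preserving homeomorphism $E_K\to E_L$ taking $\mu_K,\lambda_K$ to $\mu_L,\lambda_L$ up to a common change of basepoint path. The induced isomorphism $G_K\to G_L$ therefore satisfies \eqref{eq:exact-marking} with one conjugator, and so does its profinite completion. For (iii)$\Rightarrow$(ii), if $\Phi$ conjugates $\mu_K,\lambda_K$ by the same $g$, then it conjugates their product $w_K$ by that $g$. All the content is in (ii)$\Rightarrow$(i).

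I would prove (ii)$\Rightarrow$(i) in two stages. Stage one upgrades (ii) to (iii): given $\Phi$ with $\Phi(w_K)=gw_Lg^{-1}$, show that $\Phi$ carries the pair $(\mu_K,\lambda_K)$ to $g(\mu_L,\lambda_L)g^{-1}$. After composing with an inner automorphism, take $g=1$.

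\begin{itemize}
\item The closed subgroup $\cl{\langle w_K\rangle}$ sits in the profinite peripheral subgroup $\cl{P_K}\cong\Zh^2$ (or $\Zh$ for the unknot, which is detected by $\widehat{G}\cong\Zh$). For nontrivial knots, peripheral subgroups are malnormal-type, self-normalizing maximal abelian subgroups in the profinite completion; this follows from profinite JSJ theory (Wilton--Zalesskii).
\item Since $w_K$ is nontrivial and primitive in $P_K$, its centralizer in $\widehat{G_K}$ should be exactly $\cl{P_K}$. Hence $\Phi(\cl{P_K})=\cl{P_L}$, giving a $\Zh$-linear map $A\in\GL_2(\Zh)$ in the bases $(\mu,\lambda)$ with $A(1,1)^T=(1,1)^T$.
\item Abelianization pins down more. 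Under $\widehat{G}\to\Zh$ the meridian maps to $1$ and the longitude to $0$. The image of $w$ is $1$, so the induced automorphism of $\Zh$ is the identity, and $\Phi(\mu_K)\equiv\mu_L$ modulo $\ker$.
\item Combined with $A(1,1)^T=(1,1)^T$, this forces $A=\begin{pmatrix}1&0\\ c&1\end{pmatrix}$-type constraints. The longitude spans $\cl{P}\cap\overline{[G,G]}$, so it is sent to a unit multiple $u\lambda_L$, and $w\mapsto w$ forces $u=1$ and $\mu\mapsto\mu$.
\end{itemize}

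Stage two deduces (i) from (iii). I would invoke Xu's genuineness theorem, which I take to say that a profinite isomorphism preserving peripheral structure is induced, up to conjugation, by a homeomorphism or is ``genuine'' in the sense of $\Zh^\times$-twists. Combined with profinite JSJ theory and simultaneous boundary realization for the satellite and composite pieces, the exact preservation of $\mu$ and $\lambda$ (with unit $1$) then yields a homeomorphism $E_K\to E_L$ respecting meridian and longitude. Waldhausen and meridional filling then give $K\simeq L$ with orientations.

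The main obstacle is stage two in the satellite/composite case. There one must realize the profinite isomorphism piece by piece on the JSJ graph of spaces, with the boundary markings matched simultaneously so that gluings agree. The unit ambiguities ($\pm1$ or $\Zh^\times$ scalings) on each piece also have to be shown to be forced to $1$ by the global preservation of $\mu_K$ and $\lambda_K$. I would first handle hyperbolic and Seifert pieces separately, then propagate the constraint from the root piece containing $\partial E_K$ along edges of the JSJ tree.
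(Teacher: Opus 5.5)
Your Stage one fails at its last step. After the centralizer and abelianization steps you know only that $\Phi(\mu_K)=\mu_L\lambda_L^{\rho}$ and $\Phi(\lambda_K)=\lambda_L^{u}$, with $\rho\in\Zh$ and $u\in\Zh^\times$. The condition $\Phi(w_K)=w_L$ then says exactly $\rho+u=1$, and this does not force $u=1$. For every $u\in\Zh^\times$ the matrix $\left(\begin{smallmatrix}1&0\\1-u&u\end{smallmatrix}\right)$ fixes $(1,1)^T$, has first row $(1,0)$, and sends the longitude to a unit power of itself, so it satisfies every constraint you used. The integral case $u=-1$, namely $\left(\begin{smallmatrix}1&0\\2&-1\end{smallmatrix}\right)$, already changes the ordered pair. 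Knowing $\Phi(\mu_K)\equiv\mu_L$ modulo $\ker\widehat\chi_L$ only places $\Phi(\mu_K)$ in $\mu_L\cl{\langle\lambda_L\rangle}$. Removing that $\lambda_L$-component is the real content of Theorem~\ref{thm:one-recovery}, and it needs two ingredients absent from your proposal.

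\emph{First ingredient: integrality of the boundary matrix.} This comes from the root piece via the rooted JSJ correspondence, case by case.
\begin{itemize}
\item At a hyperbolic root, peripheral unit-regularity gives $A=\eta B$ with $B\in\GL_2(\Z)$, and $A(1,1)^T=(1,1)^T$ forces $\eta=\pm1$.
\item At a prime Seifert root, the characteristic fibre $\mu^{c}\lambda$ with $|c|\ge2$ is carried to a unit power of the target fibre, and that unit is $\pm1$ by comparing abelianizations. This supplies a second integral direction, so Lemma~\ref{lem:integral-vectors} makes $A$ integral.
\item At a composing root the fibre is $\mu$ itself. So $\Phi(\mu_K)$ is a unit power of $\mu_L$, and abelianization makes it exactly $\mu_L$.
\end{itemize}

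\emph{Second ingredient: a filling argument.} Once $A$ is integral, $s=\Phi(\mu_K)$ is an honest slope that closed normally generates $\widehat{G_L}$. Hence the filling $E_L(s)$ has trivial profinite completion, so trivial fundamental group by residual finiteness. The Poincar\'e conjecture then makes it $S^3$, and Gordon--Luecke forces $s$ to be meridional, i.e.\ $u=1$. Without the first ingredient the nonintegral units survive; without the second the case $u=-1$ survives.

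Your centralizer step has the right conclusion for the wrong reason. Primitivity in $P_K$ is not enough. At a Seifert root the fibre element is primitive yet central in the root group: for the trefoil, $\mu^6\lambda=x^2$ is central in $G_T$. Moreover, Seifert boundary subgroups are not malnormal, since distinct conjugates meet in the fibre closure. What works for $\mu_K\lambda_K$ is that it lies outside every Seifert root fibre closure. Together with the profinite-tree argument, this shows it fixes only the root vertex, and the local boundary-intersection results then give $C(w_K)=\cl{P_K}$.

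In Stage two, Xu's genuineness theorem applies only to hyperbolic lattices. Seifert blocks instead require the simultaneous graph-manifold boundary theorem. Its single unit per block is forced to $\pm1$ by integrality, seeded from $A_0=I$ or from hyperbolic interfaces through integral gluing matrices, and collar isotopies then glue the pieces. Your outline of Stage two points in that direction and matches the paper's plan. The decisive missing idea is in Stage one.
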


The conclusion includes the unknot and arbitrary satellite and composite
knots. Its geometric input is the following theorem. Here
$\PSL_2(\C)$ is the orientation-preserving isometry group of hyperbolic
$3$-space; a lattice is a discrete subgroup with finite-volume quotient,
nonuniform means that the quotient is noncompact, and torsion-free means
that no nonidentity element has finite order.

\begin{thm}[Xu's genuineness theorem
{\cite[Definition~6.1 and Proposition~16.3]{Xu26}}]
\label{thm:xu-genuineness}
Let $\Gamma,\,\Delta$ be torsion-free nonuniform lattices in $\PSL_2(\C)$.
Every continuous isomorphism $f\,:\,\widehat\Gamma\,\longrightarrow\,\widehat\Delta$
satisfies the following:
\[
 f\ =\ \Inn(h)\circ\widehat\alpha,
 \quad h\ \in \widehat\Delta,
 \quad\alpha\,:\, \Gamma\,\xrightarrow{\,\,\cong\,\,}\, \Delta.
\]
The map $\widehat\alpha$ is induced by the discrete group isomorphism
$\alpha$, and $\Inn(h)(z)\,=\,hzh^{-1}$.
\end{thm}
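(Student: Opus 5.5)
This theorem is imported from \cite{Xu26}, and the paper applies it as a black box. If I had to reconstruct it, I would go through the profinite machinery for cusped hyperbolic $3$-manifolds and aim at one structural fact. Any isomorphism $\widehat\Gamma\to\widehat\Delta$ is first shown to be a ``$\Zh^\times$-twist'' of a genuine one, and the twist is then shown to be trivial.

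\textbf{Step 1: reduce to $3$-manifolds and peripheral structure.} Write $M=\mathbb H^3/\Gamma$ and $N=\mathbb H^3/\Delta$. These are finite-volume cusped hyperbolic $3$-manifolds, and they are aspherical, so any discrete isomorphism is induced by a homotopy equivalence. By Mostow--Prasad rigidity, such an isomorphism is realized by an isometry. Next I would invoke the profinite detection of peripheral subgroups due to Wilton--Zalesskii. It says that $f$ carries each cusp subgroup closure $\overline{P}\cong\Zh^2$ of $\widehat\Gamma$ onto a conjugate of a cusp subgroup closure of $\widehat\Delta$. It also induces a bijection between the cusps.

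\textbf{Step 2: $\Zh^\times$-regularity.} Following Liu's theory of profinite almost rigidity, I would show that $f$ is \emph{$\Zh^\times$-regular}. This means there is one $\kappa\in\Zh^\times$ with two properties. On $H_1(\,\cdot\,;\Zh)$, $f$ agrees with $\kappa$ times an isomorphism defined over $\Z$. On each cusp torus, the induced map $\Zh^2\to\Zh^2$ lies in $\kappa\cdot\GL_2(\Z)$ up to conjugation. The tool here is the profinite detection of the Thurston norm and of fibred classes via twisted Alexander polynomials over finite quotients. Together with Liu's finiteness theorem, this leaves only finitely many candidate $N$, and it shows that $f$ is a $\kappa$-multiple of a genuine map on the virtual homology of finite covers.

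\textbf{Step 3 (main obstacle): $\kappa=\pm1$.} The hard part is to force the twist $\kappa$ to be $\pm1$ and then to conclude that $f$ itself equals $\widehat\alpha$ up to inner automorphism. This is stronger than merely producing some $\alpha$. My first attempt would use the cusp data. In each cusp torus, $f$ acts on $\Zh^2$ as $\kappa A$ with $A\in\GL_2(\Z)$. It must also send slopes bounding in finite covers (Dehn filling kernels) to such slopes. I would compare the $\kappa$-scaled map with Dehn fillings: if $\kappa\neq\pm1$, then some filling would be sent to a non-filling, and detecting the resulting finite quotients would give a contradiction. A second approach is to use the trace field and the discrete faithful $\PSL_2(\C)$ representation. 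Galois conjugates of the holonomy, extended profinitely, should determine $\kappa$ through the cusp shape, and one shows that this forces $\kappa\in\{\pm1\}$.

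\textbf{Step 4: conclude genuineness.} With $\kappa=\pm1$, the regular map together with Mostow rigidity gives a discrete isomorphism $\alpha\colon\Gamma\to\Delta$ whose completion agrees with $f$ on all peripheral subgroups and on abelianizations of all finite covers. I would then consider $\widehat\alpha^{-1}\circ f$, an automorphism of $\widehat\Gamma$ that acts trivially in this sense. To finish, I would use that such automorphisms of $\widehat\Gamma$ are inner. This should follow from the conjugacy separability and cusp-preservation arguments for hyperbolic $3$-manifold groups, possibly after replacing $\alpha$ by its composite with an isometry of $M$, since the isometry group of $M$ is finite. This yields $f=\Inn(h)\circ\widehat\alpha$.
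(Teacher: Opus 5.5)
There is a genuine gap in Steps 3--4, which is where all the content of the theorem lies.

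\textbf{Step 4.} You need a fact of the following form: a continuous automorphism of $\widehat\Gamma$ that is trivial on cusp closures and on the homology of finite covers is inner. No such result is available; it is essentially as strong as the theorem itself. Conjugacy separability does not supply it. That property concerns elements and automorphisms of the discrete group $\Gamma$, whereas $\widehat\alpha^{-1}\circ f$ is an arbitrary profinite automorphism that need not preserve $\Gamma$. You also never establish the hypothesis. Regularity on each finite cover only says that each homology map is a unit times an integral map. It does not say that these integral maps all come from one isometry $\alpha$, and Liu-type results do not provide this either. Producing a single discrete $\alpha$ that controls $f$ on every finite quotient is exactly the difficulty, and your plan has no mechanism for it.

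\textbf{Step 3.} Your filling idea cannot work. For $\kappa\in\Zh^\times$ one has $\overline{\langle b^\kappa\rangle}=\overline{\langle b\rangle}$. So a unit-twisted cusp map $\kappa A$ carries the closed normal closure of a slope $s$ to that of the integral slope $As$, and fillings are always sent to fillings. The twist is therefore invisible to filling quotients, and no contradiction can arise. The trace-field alternative is not an argument as stated.

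\textbf{The paper's route.} The argument outlined in the paper's appendix, following \cite{Xu26}, makes this invisibility the engine of the proof. It never isolates $\kappa=\pm1$ as a separate step.
\begin{enumerate}
\item Peripheral unit-regularity matches integral cusp slopes $a_i\leftrightarrow b_i$. Hence $f$ descends to the completions of the orbifold fillings $\pi_1(M)/\langle\!\langle a_i^{n!}\rangle\!\rangle$ and $\pi_1(N)/\langle\!\langle b_i^{n!}\rangle\!\rangle$. For large $n$ these are uniform lattices.
\item Xu's genuineness theorem for uniform lattices \cite[Theorem~15.3]{Xu26}, the deep input your plan never invokes, makes each descended map genuine.
\item Mostow rigidity, followed by drilling the singular cores, gives homeomorphisms $M\to N$.
\item Finiteness of their isotopy classes yields one discrete $\alpha$ along an infinite subsequence of $n$.
\item The relations $a_i^{n!}$ die in every finite quotient once $n$ is large, unlike ordinary manifold filling relations. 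So every finite quotient factors through all sufficiently late factorial fillings. Thus $f$ and $\widehat\alpha$ agree up to conjugacy on a cofinal family of finite characteristic quotients.
\item Compactness then yields a single conjugator $h$ with $f=\Inn(h)\circ\widehat\alpha$.
\end{enumerate}
The missing ingredients in your plan are the closed-case theorem, the factorial orbifold fillings, and the subsequence and compactness step.
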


Theorem~\ref{thm:xu-genuineness} identifies the particular profinite isomorphism, up to an inner automorphism. Together with Mostow--Prasad rigidity, it gives a homeomorphism of each hyperbolic piece realizing all of its prescribed cusp maps. For satellite and composite exteriors, the maps on these pieces
must agree with those on neighbouring Seifert pieces. We group each maximal connected collection of Seifert pieces into a graph-manifold block. The integral gluing matrices make the prescribed map on the block side of each hyperbolic interface integral. Xu's simultaneous graph-manifold boundary theorem gives two homeomorphisms of the block and a single profinite unit relating their boundary maps to prescribed maps. Integrality forces this unit to be $1$ or $-1$, so one of the two homeomorphisms realizes every prescribed boundary map of the block. The resulting piece maps must agree up to isotopy on the common tori. Adjusting them in boundary collars allows them to glue while retaining the specified exterior boundary action.  

\begin{maintheorem}\label{thm:one-recovery}
Let $\Phi\,:\,\widehat{G_K}\,\longrightarrow\, \widehat{G_L}$ be a continuous isomorphism
for which $\Phi(\mu_K\lambda_K)\,=\,g\mu_L\lambda_Lg^{-1}$.
Then this $g\,\in\, \widehat{G_L}$ satisfies the following:
\[
 \Phi(\mu_K)\ =\ g\mu_Lg^{-1},\quad \Phi(\lambda_K)\ =\ g\lambda_Lg^{-1}.
\]
\end{maintheorem}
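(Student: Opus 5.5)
By Theorem~\ref{thm:main} being the goal, I would prove Theorem~\ref{thm:one-recovery} directly and then deduce the recognition statement. The plan is to first normalize the conjugator, then identify what $\Phi$ does on the peripheral subgroup, and finally read off the meridian and longitude from the single equation $\Phi(w_K)=w_L$. Replacing $\Phi$ by $\Inn(g^{-1})\circ\Phi$, I may assume $\Phi(\mu_K\lambda_K)=\mu_L\lambda_L$. The claim then becomes $\Phi(\mu_K)=\mu_L$ and $\Phi(\lambda_K)=\lambda_L$.

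\textbf{Unknot.} If $K$ is the unknot, then $\widehat{G_K}\cong\Zh$ is abelian, so $L$ is also trivial. In that case $\lambda_K=1=\lambda_L$, $w=\mu$, and there is nothing to prove. From now on I assume both knots are nontrivial.

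\textbf{Step 1: $\Phi$ carries the peripheral closure to the peripheral closure.} Let $\cl{P_K}\cong\Zh^2$ be the closure of $\langle\mu_K,\lambda_K\rangle$ in $\widehat{G_K}$. I would show that $\cl{P_K}$ is the centralizer of $w_K$ in $\widehat{G_K}$, and likewise for $L$. I plan to use the profinite JSJ theory (in the Wilton--Zalesskii form): boundary tori give malnormal-type peripheral subgroups in the profinite completion, and $w_K$ is not conjugate into a Seifert fibre or a JSJ torus in a way that would enlarge its centralizer. It follows that $\Phi(\cl{P_K})=\cl{P_L}$. The restriction $\Phi|\colon\Zh^2\to\Zh^2$ is then a matrix in $\GL_2(\Zh)$ fixing the vector $(1,1)$ in the basis $(\mu,\lambda)$.

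\textbf{Step 2: constrain the matrix.} This is where I expect the main obstacle. I need to know that the restriction has the form
\[
 \mu_K\mapsto\mu_L^{\kappa},\qquad \lambda_K\mapsto\lambda_L^{\epsilon\kappa},
\]
for some $\kappa\in\Zh^\times$ and $\epsilon=\pm1$. Two parts of this are comparatively soft:
\begin{enumerate}
\item The abelianization $H_1\cong\Zh$ forces $\Phi(\mu_K)$ to have $\mu$-exponent equal to a unit $\kappa$.
\item Since $\lambda$ generates $\cl{P}\cap\cl{[G,G]}$, $\lambda_K$ goes to a unit power of $\lambda_L$. So the matrix is upper triangular, with unit diagonal entries and some off-diagonal term $\mu\mapsto\mu^\kappa\lambda^{b}$.
\end{enumerate}
The hard part is to kill $b$ and to tie the $\lambda$-unit to $\pm\kappa$. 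For this I would use the paper's machinery. Decompose $E_K$ along its JSJ tori. On each hyperbolic piece, Theorem~\ref{thm:xu-genuineness} together with Mostow--Prasad makes $\Phi$ genuine up to conjugacy. On each graph-manifold block, Xu's simultaneous boundary theorem realizes $\Phi$ up to one profinite unit, and integrality of the gluing matrices forces that unit to be $\pm1$. Gluing these gives a homeomorphism $f\colon E_K\to E_L$ whose boundary action agrees with $\Phi|_{\cl{P_K}}$ up to a global unit $\kappa$. Since a homeomorphism of knot exteriors sends $\mu$ to $\mu^{\pm1}$ (Gordon--Luecke) and $\lambda$ to $\lambda^{\pm1}$, this yields exactly the displayed form.

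\textbf{Step 3: conclude.} Applying the displayed form to $\Phi(w_K)=w_L$ gives $\mu_L^{\kappa}\lambda_L^{\epsilon\kappa}=\mu_L\lambda_L$ in $\Zh^2$. Hence $\kappa=1$ and $\epsilon\kappa=1$, so $\Phi(\mu_K)=\mu_L$ and $\Phi(\lambda_K)=\lambda_L$. Undoing the normalization gives the statement with the original $g$.
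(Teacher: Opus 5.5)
Your Step~2 has a genuine gap. The realization machinery you invoke takes as a \emph{hypothesis} the regularity of the exterior boundary map that you are trying to prove.

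Theorem~\ref{thm:xu-boundary} does not realize an arbitrary peripheral-preserving isomorphism of a graph-manifold block up to one unit. It requires the normalized maps to be unit-regular on a nonempty selected set of boundary tori, and it says nothing about unselected tori. ``Integrality of the gluing matrices'' only transports integrality from a hyperbolic neighbour across an interface, as in \eqref{eq:interface-integrality}. This fails in two cases:
\begin{itemize}
\item When the root lies in a Seifert block, the exterior torus is not a legitimate selected component. The homeomorphism you glue then has no proven relation to $\Phi|_{\cl{P_K}}$ there. For mixed exteriors this could be repaired by Xu's exterior-regularity theorem quoted after Proposition~\ref{prop:automatic-regularity}.
\item For a graph knot, e.g.\ a torus knot, the exterior is one Seifert piece with a single boundary torus, and there is no integral input at all.
\end{itemize}
Note also that your $\kappa$ is already $1$. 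Since $\widehat\chi_K(w_K)=1=\widehat\chi_L(w_L)$, the induced map on $\Zh$ is the identity. So the boundary matrix is $\left(\begin{smallmatrix}1&0\\1-\beta&\beta\end{smallmatrix}\right)$ with $\beta\in\Zh^\times$, and for such a matrix unit-regularity is equivalent to integrality. The whole difficulty is therefore proving integrality, which your plan assumes.

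The missing idea is to extract integrality directly from the mark and the root geometry, as the paper does in Lemma~\ref{lem:word-integrality}.
\begin{itemize}
\item At a prime Seifert root, the closure of the fibre $h_K=\mu_K^{c_K}\lambda_K$, with $|c_K|\ge 2$, is characteristic, so $\Phi(h_K)=h_L^u$. Abelianizing gives $c_K=c_Lu$, hence $u=\pm1$. Then $A$ sends the $\mathbb Q$-independent integer vectors $(1,1)^T$ and $(c_K,1)^T$ to integer vectors, and Lemma~\ref{lem:integral-vectors} gives $A\in\GL_2(\Z)$.
\item At a composing root the fibre is $\mu$, which gives $A=I$ at once.
\item At a hyperbolic root, $A=\eta B$ by Theorem~\ref{thm:hyp-regular}, and the fixed vector $(1,1)^T$ forces $\eta=\pm1$.
\end{itemize}
The same computation, $(1,1)\notin\Zh\cdot(c_K,1)$, is what your Step~1 sketch leaves implicit when it rules out a larger centralizer at a Seifert root.

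Once $A$ is integral, you need no exterior homeomorphism and no appeal to Theorem~\ref{thm:xu-genuineness}. The image $\mu_L\lambda_L^{1-\beta}$ of the meridian closed-normally generates $\widehat{G_L}$. So Dehn filling along its slope gives a closed manifold with trivial profinite completion. By residual finiteness it has trivial $\pi_1$, hence it is $S^3$. Gordon--Luecke then makes the slope meridional, so $\beta=1$.
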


For a nontrivial knot, the centralizer of $w_K$ is precisely the closed
peripheral subgroup. An isomorphism preserving $w_K$ therefore preserves this subgroup. Since $w_K$ has image $1$ under the normalized abelianization, the geometry of the root JSJ piece then forces the induced boundary matrix to be integral. A Dehn-filling argument identifies the meridian and longitude axes up to signs; the equality $\Phi(w_K) = w_L$ fixes both
signs after one conjugation. Theorem~\ref{thm:one-recovery} uses the earlier peripheral results and is independent of
Theorem~\ref{thm:xu-genuineness}. The same argument works for the exact mark $\mu_K\lambda_K^n$ with any fixed integer $n\,\ne\,0$.

\subsection{Finite representations}

For a finite group $Q$ and $x\,\in\, Q$, Eisermann's colouring polynomial is
\begin{equation}\label{eq:colouring-poly}
 P_Q^x(K)\,\ =\,\ \sum_{\substack{\rho:G_K\to Q\\\rho(\mu_K)=x}}
\rho(\lambda_K)\in\Z[Q].
\end{equation}
The group ring $\Z[Q]$ consists of formal integer sums of elements of
$Q$. The coefficient of $y$ counts homomorphisms with meridian image
$x$ and longitude image $y$. For the one-element mark put
\[
 N_Q(z;K)\ =\ \#\{\rho\,:\,G_K\,\to\, Q\,\,\big\vert\,\, \rho(w_K)\,=\,z\}.
\]
Also, let $\Qk K$ and $\Qwk K$ be the sets of isomorphism classes
\[
 \begin{split}
 \Qk K\ &=\ \{[(Q,q(\mu_K),\,q(\lambda_K))]\,\, \big\vert\,\, q\,:\,G_K\,\onto\, Q,\,\ Q\,\text{ finite}\},\\
 \Qwk K\ &=\ \{[(Q,\,q(w_K))]\,\,\big\vert\,\, q\,:\,G_K\,\onto\, Q,\,\ Q\,\text{ finite}\}.
 \end{split}
\]
The arrow denotes a surjection, and an isomorphism of marked finite groups preserves every displayed element. We record which isomorphism classes occur, without recording their multiplicities or the maps between quotients.

\begin{cor}\label{cor:colouring-main}
For oriented tame knots $K,\,L$, the following are equivalent:
\begin{enumerate}
\item $K\,\simeq\, L$;
\item $\Qwk K\,=\,\Qwk L$;
\item $N_Q(z;K)\,=\,N_Q(z;L)$ for every finite $Q$ and $z\,\in\, Q$;
\item $\Qk K\,=\,\Qk L$;
\item $P_Q^x(K)\,=\,P_Q^x(L)$ for every finite $Q$ and $x\,\in\, Q$.
\end{enumerate}
\end{cor}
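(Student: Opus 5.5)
The plan is to deduce the corollary from Theorem~\ref{thm:main}, using the standard dictionary between finite quotients of a finitely generated group and its profinite completion. The implications (i)$\Rightarrow$(ii)--(v) are immediate. An oriented equivalence $K\simeq L$ induces an isomorphism $G_K\cong G_L$ carrying $\mu_K,\lambda_K$ to a common conjugate of $\mu_L,\lambda_L$. Conjugation by an element of $G_L$ does not change the isomorphism class of a marked quotient $(Q,q(\mu),q(\lambda))$. It also does not change the counts $N_Q(z;\cdot)$ or the polynomials $P_Q^x(\cdot)$, since precomposing $\rho$ with an inner automorphism of $G_L$ is a bijection on homomorphisms preserving the conditions. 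For the converse directions I will first show (iii)$\Rightarrow$(ii) and (v)$\Rightarrow$(iv). Then I will show (ii)$\Rightarrow$(i) and (iv)$\Rightarrow$(i).

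For (iii)$\Rightarrow$(ii), I will count surjections rather than homomorphisms. Let $E_Q(z;K)$ be the number of surjections $G_K\onto Q$ with $w_K\mapsto z$. Every homomorphism has image some subgroup $H\le Q$ with $z\in H$, so
\[
 N_Q(z;K)\ =\ \sum_{z\in H\le Q} E_H(z;K).
\]
Möbius inversion on the subgroup lattice, or induction on $|Q|$, then recovers each $E_Q(z;K)$ from the $N$-data. The class $[(Q,z)]$ lies in $\Qwk K$ exactly when $E_Q(z;K)>0$. The same argument applied to $P_Q^x$ gives (v)$\Rightarrow$(iv): the coefficient of $y$ in $P_Q^x$ is the number of homomorphisms with $(\mu,\lambda)\mapsto(x,y)$, and the inversion runs over subgroups containing both $x$ and $y$.

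The main step is (ii)$\Rightarrow$ $\Wk K\cong\Wk L$, and similarly (iv)$\Rightarrow$ $\Pk K\cong\Pk L$, after which Theorem~\ref{thm:main} gives $K\simeq L$. I will adapt the standard proof that two finitely generated residually finite groups with the same finite quotients have isomorphic completions. For each $n$, let $U_n$ be the intersection of all normal subgroups of index at most $n$ in $G_K$, and similarly $V_n$ in $G_L$. These are characteristic subgroups of finite index, and they are cofinal because the groups are finitely generated. For each $n$, the class of $(G_K/U_n,\bar w_K)$ lies in $\Qwk L$, so there is a surjection $G_L\onto G_K/U_n$ sending $w_L$ to $\bar w_K$. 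Comparing indices (i.e.\ $|G_K/U_n|$ with $|G_L/V_n|$), as in the classical argument, shows that its kernel is $V_n$.

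This yields, for each $n$, a nonempty finite set $X_n$ of isomorphisms $G_K/U_n\to G_L/V_n$ sending $\bar w_K$ to a conjugate of $\bar w_L$. The reduction maps $X_{n+1}\to X_n$ are well defined because the subgroups are characteristic and the marking condition is preserved under reduction. I will allow composition with inner automorphisms, so that the condition is ``up to conjugacy''. An inverse limit of nonempty finite sets is nonempty, so I obtain a continuous isomorphism $\Phi$ with $\Phi(w_K)$ conjugate to $w_L$ in each finite quotient. Conjugacy classes in a profinite group are closed, and the sets of admissible conjugators in the finite quotients form an inverse system of nonempty finite sets. A compactness argument on these sets produces a single $g\in\widehat{G_L}$ with $\Phi(w_K)=gw_Lg^{-1}$. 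The pair-marked case is identical, using the pair $(\mu,\lambda)$ and a single conjugator for both elements.

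The expected obstacle is bookkeeping the conjugator compatibly through the inverse limit. I will handle it by defining $X_n$ as pairs (isomorphism, coset of conjugator modulo the centralizer) and checking compatibility under reduction, so that the limit delivers $\Phi$ and $g$ simultaneously.
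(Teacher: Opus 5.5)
Your proposal is correct. Its structure matches the paper's:
\begin{itemize}
\item the forward implications follow by precomposition;
\item (iii)$\Rightarrow$(ii) and (v)$\Rightarrow$(iv) follow by inverting the subgroup-sum relation between homomorphism and epimorphism counts, which is exactly the recursion \eqref{eq:epi-recursion};
\item the reverse implications go through marked completions and Theorem~\ref{thm:main}.
\end{itemize}

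You differ in the dictionary step from equal marked quotient sets to a marked isomorphism of completions. The paper proves this (Proposition~\ref{prop:finite}) intrinsically for topologically finitely generated profinite groups. Compactness over all open normal subgroups of the target gives a marked continuous surjection in each direction, and Hopficity of the composite makes it injective.

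You instead run the classical discrete argument with the characteristic subgroups $U_n,V_n$. A marked surjection $G_L\onto G_K/U_n$ has kernel containing $V_n$, because the normal subgroups of index at most $n$ in $G_K/U_n$ intersect trivially. The symmetric index inequality then forces the kernel to equal $V_n$. So you obtain marked isomorphisms at every level and need only an inverse limit of nonempty finite sets, with no Hopfian step.

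One simplification: your level isomorphisms already send $\bar w_K$ exactly to $\bar w_L$, and likewise for the ordered pair. So you can take $X_n$ to consist of exactly marked isomorphisms. Reduction preserves exactness, so the limit satisfies $\Phi(w_K)=w_L$ on the nose. The conjugator bookkeeping you flag as the main obstacle therefore never arises; your closed-conjugacy-class argument is valid but unnecessary.

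As for what each route buys: the paper's version applies to arbitrary topologically finitely generated profinite groups without reference to a dense discrete subgroup. Yours is more elementary and builds the isomorphism level by level directly from the discrete knot groups.
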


This answers \cite[Question~1.5]{Eisermann}.

\subsection{Prime branches and reduced marking}

Schubert's theorem gives a finite prime decomposition, unique up to
permutation and oriented equivalence \cite{Schubert}. Write
$\Zh\,=\,\varprojlim_n\Z/n\Z$ for the profinite integers and $\Zh^\times$ for
its units. For a profinite group element $z$, the power $z^u$ is specified
in each finite quotient by reducing $u$ modulo the order of the image
of $z$.

\begin{maintheorem}\label{thm:constituents}
Let $K\,=\,K_1\#\cdots\#K_r$ and $L\,=\,L_1\#\cdots\#L_s$ be decompositions
into nontrivial prime knots, with $r\,\ge\,2$. Suppose a continuous
isomorphism $\Phi\,:\,\widehat{G_K}\,\longrightarrow\, \widehat{G_L}$ satisfies, up to
simultaneous conjugacy,
\[
 \Phi(\mu_K)\ =\ \mu_L^\alpha,\quad
 \Phi(\lambda_K)\ =\ \lambda_L^\beta,
 \quad\alpha,\,\beta\ \in\ \Zh^\times.
\]
Then $r\,=\,s$, and there exist a permutation $\sigma$ of $\{1,\,\cdots,\,r\}$
and continuous isomorphisms
$\Phi_i\,:\,\widehat{G_{K_i}}\,\longrightarrow\,\widehat{G_{L_{\sigma(i)}}}$ such that,
up to simultaneous conjugacy for each $i$,
\[
 \Phi_i(\mu_{K_i})\ =\ \mu_{L_{\sigma(i)}}^\alpha,
 \qquad\Phi_i(\lambda_{K_i})\ =\ \lambda_{L_{\sigma(i)}}^\beta.
\]
\end{maintheorem}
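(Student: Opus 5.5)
The plan is to pass to the JSJ decomposition of the composite exterior and use profinite JSJ rigidity (Wilton--Zalesskii) to match the pieces. For $r\ge 2$, the exterior $E_K$ contains a root piece $C_K$ homeomorphic to (planar surface with $r+1$ boundary circles)$\times S^1$, the composing space. Its fibre is the meridian $\mu_K$, its outer boundary carries $\partial\nu K$, and its $r$ inner boundary tori are glued to the exteriors $E_{K_i}$. The glueing identifies the fibre $\mu_K$ with $\mu_{K_i}$ and the inner boundary curve with $\lambda_{K_i}$. On the outer torus, $\lambda_K$ is the product of the $\lambda_{K_i}$ in $\pi_1(C_K)\cong F_r\times\Z$.

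\emph{Step 1.} I will show that the closed subgroup $\overline{\pi_1(C_K)}$ is a vertex group of the profinite JSJ graph of groups of $\widehat{G_K}$ and is canonically attached to the peripheral subgroup. The key observation is that $\Phi(\mu_K)=\mu_L^\alpha$ generates the same closed subgroup as $\mu_L$. That subgroup is the centre of the profinite composing-space vertex group, whose normaliser is that vertex group. Hence $\Phi$ carries $\overline{\pi_1(C_K)}$ onto a conjugate of $\overline{\pi_1(C_L)}$. By profinite JSJ rigidity, $\Phi$ induces an isomorphism of the JSJ graphs. The root vertex goes to the root vertex, and its $r$ inner edges go bijectively to the $s$ inner edges. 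Since the number of boundary components of the base orbifold is a profinite invariant of Seifert vertex groups (it can be read off from the abelianisation of $F_r$), this gives $r=s$. The bijection on inner edges is $\sigma$.

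\emph{Step 2.} The branch of the JSJ tree beyond the $i$th inner edge is the JSJ decomposition of $E_{K_i}$. Rigidity gives an isomorphism $\Phi_i:\widehat{G_{K_i}}\to\widehat{G_{L_{\sigma(i)}}}$ carrying the edge group $\overline{\langle\mu_{K_i},\lambda_{K_i}\rangle}$ onto a conjugate of the corresponding edge group. We can take $\Phi_i$ to be the restriction of $\Phi$ to the profinite fundamental group of the branch, suitably conjugated; this uses separability and efficiency of the JSJ decomposition. On the $i$th edge, the fibre forces $\Phi_i(\mu_{K_i})=\mu_{L_{\sigma(i)}}^{\alpha}$ with the \emph{same} $\alpha$, since the fibre class of $C_K$ is $\mu_K$ itself and $\Phi(\mu_K)=\mu_L^\alpha$.

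\emph{Step 3.} We then need $\Phi_i(\lambda_{K_i})=\lambda_{L_{\sigma(i)}}^{\beta}$ after one simultaneous conjugation. The map on the root vertex is an isomorphism $\widehat{F_r}\times\Zh\to\widehat{F_r}\times\Zh$. It sends each inner boundary element $\lambda_{K_i}$ to a conjugate of $\lambda_{L_{\sigma(i)}}^{\gamma_i}\mu_L^{c_i}$, with $\gamma_i\in\Zh^\times$ and $c_i\in\Zh$. It sends the outer element $\lambda_K$ to $\lambda_L^\beta$. Two facts pin down the coefficients. First, the longitude dies in $H_1$, and $\Phi$ respects the normalised abelianisation $H_1\cong\Zh$ (with $\mu\mapsto\alpha$). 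Second, the Seifert profinite rigidity results of Wilton--Zalesskii force the boundary-slope exponents on a planar-surface Seifert piece to equal a single unit $\gamma$. Comparing the outer relation $\lambda_K=\prod\lambda_{K_i}$ with $\lambda_L^\beta$ in the quotient by the centre should then give $\gamma=\beta$. Next, each $\lambda_{K_i}$ must also be null-homologous in $\widehat{G_{K_i}}$, while $\mu$ maps to a unit there. This forces $c_i=0$.

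I expect Step 3 to be the main obstacle. It requires controlling the boundary matrix of a profinite isomorphism of $\widehat{F_r}\times\Zh$ that preserves the peripheral structure, including excluding twisting by the fibre in a product of inner boundary elements. My first approach would be to quotient by the central $\Zh$, reducing to $\widehat{F_r}$ with conjugacy classes of the primitive elements $x_1,\dots,x_r$ and of $x_1\cdots x_r$. There I would apply the known result that an automorphism of $\widehat{F_r}$ preserving the peripheral structure of a planar surface multiplies all boundary classes by a common unit.
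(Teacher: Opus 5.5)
Your outline is the paper's proof. It has the same ingredients: a rooted JSJ correspondence, identification of both roots as composing spaces, upper-triangular inner boundary matrices $\left(\begin{smallmatrix}\alpha&c_i\\0&\gamma_i\end{smallmatrix}\right)$ on the root, the normalized abelianization to kill the fibre shear $c_i$, and transfer of the resulting boundary maps to the branches. For the last step the paper does not claim that a conjugate of the literal restriction of $\Phi$ to a branch subgroup works. It builds $\Phi_i$ from the restricted congruent graph data and transfers the normalized boundary map through the edge congruence, Proposition~\ref{prop:jsj}(ii),(iii). Your version can probably be justified, but it needs its own argument.

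One step needs repair. In Step~1 you assert that $\overline{\langle\mu_L\rangle}$ is the centre of a composing-space vertex group of $\widehat{G_L}$. That presupposes $L$ is composite, which is part of what must be shown, since the hypotheses allow $s=1$. You also leave unproved that the normalizer of $\overline{\langle\mu_K\rangle}$ is $\overline{\pi_1(C_K)}$. This is not a one-line fixed-point argument, because the meridian closure fixes the root vertex and every adjacent child vertex of the profinite tree. Use the peripheral closure instead. Since $\alpha,\beta$ are units, after the one global conjugation $\Phi(\overline{P_K})=\overline{P_L}$. Proposition~\ref{prop:jsj}(i) then gives a rooted correspondence preserving Seifert type and the number of children; the root placement uses that the exterior peripheral closure fixes only the root vertex, as in the argument of Lemma~\ref{lem:word-centralizer}. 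The only Seifert root of a knot exterior with at least two children is a composing space, so $L$ is composite and $s=r$.

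The step you flag as the main obstacle is easy and needs no planar-surface rigidity theorem. By Lemma~\ref{lem:normalization}, the root restriction $\psi$ satisfies $\psi(d)=d'^{\beta}$ exactly. Quotient by the central fibre and abelianize $\widehat F_r'$. Conjugators disappear, $[y_1],\dots,[y_r]$ is a $\Zh$-basis, and $d=x_1\cdots x_r$ gives $\sum_i\gamma_i[y_{\sigma(i)}]=\beta\sum_j[y_j]$. Hence $\gamma_i=\beta$ for every $i$ directly. The common-unit result you cite is true but redundant here.
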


The exponents $\alpha$ and $\beta$ are the same for every prime branch, including branches
containing several satellite pieces. In the composing region, global abelianization removes meridional shears, and abelianizing its planar
base forces all longitude exponents to equal the outer exponent $\beta$.

Let $K^\dagger$ be the mirror knot with knot direction also reversed, and put
\[
 \Rk K\ =\ (\widehat{G_K},\, \mu_K,\,\cl{\langle\lambda_K\rangle}).
\]
An isomorphism of reduced markings preserves the exact meridian and the
closed longitude subgroup by one common conjugation.

\begin{maintheorem}\label{thm:reduced}
For nontrivial knots, a continuous isomorphism
$\Phi\,:\,\widehat{G_K}\,\longrightarrow\,\widehat{G_L}$ satisfying
$\Phi(\mu_K)\,=\,\mu_L$ and $\Phi(\lambda_K)\,=\,\lambda_L^\beta$ up to
simultaneous conjugacy, with $\beta\,\in\,\Zh^\times$, has $\beta\,\in\,\{1,\,-1\}$.

For arbitrary oriented tame knots,
\[
 \Rk K\ \cong\ \Rk L
 \]
 if and only if either $K\,\simeq\, L$ or $K\,\simeq\, L^\dagger$.
\end{maintheorem}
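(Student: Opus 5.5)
The plan is to prove the first assertion by reducing it to the prime case, and then to derive the reduced-marking classification from Theorem~\ref{thm:main}.

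\emph{First assertion, composite case.} If $K$ is composite with $r\ge2$ prime summands, Theorem~\ref{thm:constituents} with $\alpha=1$ gives prime-branch isomorphisms $\Phi_i$. Each $\Phi_i$ satisfies $\Phi_i(\mu_{K_i})=\mu_{L_{\sigma(i)}}$ and $\Phi_i(\lambda_{K_i})=\lambda_{L_{\sigma(i)}}^\beta$, with the same $\beta$. So it suffices to treat a prime nontrivial $K$. By abelianization $L$ is then also nontrivial, and it is prime by the constituent count.

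\emph{First assertion, prime case.} Here I would use the JSJ root piece $M_0$ of $E_K$, the piece containing $\partial E_K$. Let $\Phi$ carry the closed peripheral subgroup $\cl P_K\cong\Zh^2$ to $\cl P_L$. With respect to the bases $(\mu,\lambda)$, the boundary map is the matrix $\diag(1,\beta)$.
\begin{itemize}
\item If $M_0$ is hyperbolic, I would invoke the profinite JSJ theory together with Theorem~\ref{thm:xu-genuineness} applied to the hyperbolic piece. These show that the induced cusp map is, up to conjugacy, the map of a genuine isomorphism. Its boundary matrix is therefore in $\GL_2(\Z)$, which forces $\beta\in\Z\cap\Zh^\times=\{\pm1\}$.
\item If $M_0$ is Seifert fibred (a torus knot exterior, a cable space, or a composing space), I would use the profinite rigidity of the Seifert data. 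Profinite isomorphisms of Seifert pieces scale the fibre and base boundary classes by a common unit $\kappa$ up to integral corrections. The longitude lies in the base direction and the meridian is tied to the fibre via the integral gluing or cabling slope. Fixing the meridian exactly should force $\kappa=\pm1$, and then $\beta=\pm1$.
\end{itemize}
The main obstacle is this integrality step when $M_0$ is Seifert fibred, especially for cable spaces. There I would first try the Dehn-filling argument alluded to after Theorem~\ref{thm:one-recovery}: filling along $\mu$ kills the group on both sides, while filling along $\mu\lambda^{n}$ for various $n$ produces groups whose profinite completions must match. The $\beta$-twisted filling slopes should be comparable only when $\beta\equiv\pm1$ modulo every $n$, so $\beta=\pm1$.

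\emph{Second assertion.} For the forward direction, let $\Phi:\Rk K\cong\Rk L$. If either knot is trivial, abelianization shows both are unknots, and the unknot satisfies $U\simeq U^\dagger$. Otherwise $\Phi(\lambda_K)$ generates $\cl{\langle\lambda_L\rangle}\cong\Zh$, so $\Phi(\lambda_K)=\lambda_L^\beta$ for some $\beta\in\Zh^\times$. The first part then gives $\beta=\pm1$.
\begin{itemize}
\item If $\beta=1$, Theorem~\ref{thm:main}(iii) gives $K\simeq L$.
\item If $\beta=-1$, I would use that $K^\dagger$ has the same group as $K$, with peripheral pair $(\mu,\lambda^{-1})$ up to conjugacy. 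Mirroring reverses $\mu$, reversing the direction reverses both curves, and the composite reverses $\lambda$ only. Hence $\Pk K\cong\Pk{L^\dagger}$ and $K\simeq L^\dagger$.
\end{itemize}
The same identification of $\Pk{L^\dagger}$ gives the converse: both $K\simeq L$ and $K\simeq L^\dagger$ induce an isomorphism of reduced markings.
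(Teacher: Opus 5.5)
The genuine gap is in the Seifert-root case of the first assertion, which you flag but do not close. The other parts are fine:
\begin{itemize}
\item the composite-to-prime reduction via Theorem~\ref{thm:constituents};
\item the derivation of the second assertion from Theorem~\ref{thm:main} and \eqref{eq:dagger-marks};
\item the hyperbolic-root case. There you use genuineness, essentially Lemma~\ref{lem:hyperbolic}. The paper needs only unit-regularity, Theorem~\ref{thm:hyp-regular}: $\diag(1,\beta)=\eta B$ with $B\in\GL_2(\Z)$ forces $B=\diag(\pm1,\pm1)$, hence $\eta=\pm1$ and $\beta=\pm1$. This keeps the first assertion independent of Theorem~\ref{thm:xu-genuineness}.
\end{itemize}

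\textbf{Why your Seifert sketch does not work.} The structure you invoke is not what the Seifert theory gives. A peripheral-preserving isomorphism has fibre-basis matrix $\left(\begin{smallmatrix}\kappa_v&\rho_{v,T}\\0&\tau_v\end{smallmatrix}\right)$.
\begin{itemize}
\item The two units need not agree: already $t\mapsto t^\kappa$ on $\widehat F_r\times\overline{\langle t\rangle}$ has $\tau_v=1$.
\item The shear $\rho_{v,T}\in\Zh$ need not be integral.
\item On $\partial E_K$ it is the meridian, not the longitude, that is a section curve. The fibre $(c_K,1)$ meets $\lambda_K$ in $|c_K|\ge2$ points.
\end{itemize}
The Dehn-filling fallback fails as well. $\Phi$ carries the slope $\mu_K\lambda_K^n$ to $\mu_L\lambda_L^{n\beta}$, which is not a slope of $E_L$ when $\beta\notin\Z$. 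So there is no target filling to compare with, and nothing produces $\beta\equiv\pm1\pmod n$. In the paper, filling is used only after integrality is known (Lemma~\ref{lem:integral-axes}).

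\textbf{The missing step.} What you need is a one-line computation with the characteristic fibre.
\begin{itemize}
\item By Lemma~\ref{lem:fibre-slope}, the regular fibre on $\partial E_K$ is $h_K=\mu_K^{c_K}\lambda_K$ with $c_K\in\Z\setminus\{0\}$. Its closure is characteristic in the root completion.
\item Hence the literal root restriction of Lemma~\ref{lem:normalization} gives $\Phi(h_K)=h_L^u=\mu_L^{c_Lu}\lambda_L^{u}$ for some $u\in\Zh^\times$.
\item On the other hand, $\Phi(h_K)=\mu_L^{c_K}\lambda_L^{\beta}$.
\item Comparing coordinates in $\overline{P_L}\cong\Zh^2$ gives $u=\beta$ and $c_K=c_L\beta$. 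Lemma~\ref{lem:scalar} then yields $\beta=\pm1$.
\end{itemize}
This is exactly where the exact meridian matters. With $\mu_K\mapsto\mu_L^\alpha$ the same computation gives only $c_K\alpha=c_L\beta$, i.e.\ $\beta=\pm\alpha$.

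\textbf{A smaller slip.} Profinite abelianization is $\Zh$ for every knot, so it cannot detect the unknot. In the second assertion, use instead that $\overline{\langle\lambda_U\rangle}=1$ while $\overline{\langle\lambda_K\rangle}\cong\Zh$ for $K\ne U$.
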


The restriction $\beta\,\in\,\{1,\,-1\}$ follows from the peripheral results and Theorem~\ref{thm:constituents}; knot recognition then follows from Theorem \ref{thm:main}. For graph knots, whose JSJ pieces are all
Seifert, recognition uses graph-manifold boundary realization. Replacing the knot by its reversed mirror changes all prime summands together. 

\subsection{Relation to earlier works}

Profinite JSJ theory is due to Wilton--Zalesskii and Wilkes
\cite{WZGraph,WZGeometries,WZ,WilkesJSJ}; Seifert fibre and scale analysis
is developed in \cite{WilkesSeifert,WilkesGraphI}. We use Xu's precise
congruent-graph and boundary-realization formulation \cite{Xu25}.
Boileau--Friedl recognize torus-knot exteriors from unmarked completions
among knot exteriors, allowing reflection
\cite[Theorem~1.5]{BoileauFriedl}. Wilkes proves unmarked group rigidity
for graph-knot exteriors, and exterior recognition for prime graph knots
\cite[Theorem~4.1]{WilkesKnots}. We realize the boundary action prescribed by the profinite isomorphism, which allows the exterior homeomorphism to preserve both orientations after filling. We also show that the single element $\mu_K\lambda_K$ recovers the ordered pair and that the same peripheral exponents pass to every prime summand.

After fixing conventions, we show that finite quotients with marked elements determine the marked profinite completion. We then establish the rooted JSJ restrictions and prove that the same peripheral exponents pass to every prime summand. The following sections construct exterior homeomorphisms realizing the prescribed boundary maps and recover the ordered meridian–longitude pair from a single marked element. We then deduce the results on reduced markings and colouring polynomials. The final section gives explicit finite witnesses distinguishing the square and granny knots, with a verification program in the appendix.
\section*{Acknowledgement}
The second author thanks Professor Martin Bridson for a helpful discussion at TIFR Mumbai in November 2025 concerning the problem of distinguishing knots using finite quandles. His suggestion to approach the problem from a group-theoretic perspective and his interest in the question were particularly encouraging. The second author acknowledges support from the Department of Atomic Energy, Government of India, under project no. RTI4019.
\section*{AI Declaration}
The authors acknowledge the use of OpenAI’s ChatGPT for language editing, for literature search and for refining some of the arguments. None of the ideas were given by AI. The mathematical ideas and results are those of the authors, who take full responsibility for the correctness and content of the paper.

\section{Conventions and profinite preliminaries}\label{sec:topology}
\label{sec:profinite}

\subsection{Exteriors and orientations}\label{subsec:paths}

A knot is the image of an embedding $S^1\,\into\, S^3$. It is called tame if an
ambient isotopy takes it to a polygonal knot, namely, a finite union of straight
line segments in $\R^3\,\subset\, S^3$. Recall that an ambient isotopy is a family of
homeomorphisms $H_t\,:\,S^3\,\longrightarrow\, S^3$, continuous in $t$, with $H_0\,=\,\id$.
We work in the usual smooth or piecewise-linear category. A tubular
neighbourhood is a homeomorphism of pairs
$(\nu K,\,K)\,\cong\,(S^1\times D^2,\,S^1\times\{0\})$.
The exterior $E_K$ is compact with the boundary $T_K\,=\,\partial\nu K$;
the complement $S^3\setminus K$ retracts onto it and has the same
fundamental group. The unknot $U$ bounds an embedded disc and has
$G_U\,\cong\,\Z$ and $\lambda_U\,=\,1$.

The {\it linking number} $\lk(C,D)$ of two disjoint oriented curves $C,\,D$
is the signed intersection number of $D$ with an oriented spanning surface
$F$ for $C$. An intersection is positive when a positive tangent to $D$
followed by an oriented tangent basis of $F$ gives the ambient
orientation. Since $H_2(S^3;\,\Z)\,=\,0$, the linking number $\lk(C,D)$
is independent of the choice of $F$.
A Seifert surface is such an oriented embedded spanning surface  for $K$.
The meridian $\mu_K$ and preferred longitude $\lambda_K$ are characterized by
$\lk(\mu_K,\,K)\,=\,1$ and $\lk(\lambda_K,\,K)\,=\,0$, with the longitude following
$K$ positively once. They form an ordered basis of $H_1(T_K;\,\Z)$.

Choose a basepoint $p\,\in\, E_K$, a point $q\,\in\, T_K$, and a path
$\gamma\,:\,p\,\longrightarrow\, q$. The based loop corresponding to a boundary loop $c$
is $\gamma*c*\gamma^{-1}$. For another path $\gamma' : p \longrightarrow q$, the replacement of $\gamma$ with $\gamma'$ conjugates $\mu_K$ and $\lambda_K$ by the single element
$[\gamma'*\gamma^{-1}]$. We use one path per boundary component throughout. Thus,
\[
 P_K\ =\ \im\bigl(\pi_1(T_K)\,\to\, G_K\bigr)\ =\ \langle\mu_K,\,\lambda_K\rangle
\]
is uniquely defined up to conjugacy, and its generators up to simultaneous
conjugacy.

A compact $3$-manifold $M$ is irreducible if every embedded sphere bounds a
ball. A two-sided surface $S$ in $M$ is incompressible if it has no compressing
disc, meaning a disc meeting it only in an essential boundary curve. The
Loop Theorem gives that $S$ is incompressible if and only if 
the homomorphism $\pi_1(S)\,\longrightarrow\, \pi_1(M)$ is
injective \cite{Papakyriakopoulos,StallingsLoop}. A surface is
boundary-parallel if it cobounds a product region with the boundary;
an essential torus or an annulus is incompressible and not boundary-parallel.
Knot exteriors are irreducible, and the boundary of a nontrivial knot
exterior is incompressible \cite[Chapters~1--3]{AFW}. Consequently
$P_K\,\cong\,\Z^2$ for $K\,\ne\, U$.

For a knot, the linking number gives its normalized abelianization
\begin{equation}\label{eq:chi-discrete}
 \chi_K\ :\ G_K\ \longrightarrow\ \Z,
 \quad\chi_K(\mu_K)\ =\ 1,
 \quad\chi_K(\lambda_K)\ =\ 0.
\end{equation}
The meridian normally generates $G_K$, and the longitude belongs to its
commutator subgroup. These facts follow, for example, from a Wirtinger
presentation, whose generators are meridians and whose crossing
relations are conjugations.

\subsection{Slopes and filling}\label{subsec:classical}

A slope is an unoriented essential simple closed curve on a torus up to
isotopy. In an integral basis it is a primitive vector $(a,\,b)\,\in\,\Z^2$,
with $(a,\,b)$ and $(-a,\,-b)$ being equivalent. We use column vectors:
$\mu\,\longmapsto\,\mu'^a\lambda'^c$ and
$\lambda\,\longmapsto\,\mu'^b\lambda'^d$ give the matrix
$\left(\begin{smallmatrix}a&b\\c&d\end{smallmatrix}\right)$.
Every element of $\GL_2(\Z)$ is realized by a torus homeomorphism; two torus
homeomorphisms with the same matrix are isotopic. On source and target
exteriors the meridian--longitude basis has the same sign relative to
boundary orientation, so elements of $\GL_2(\Z)$ of determinant $+1$
preserve this orientation.

Dehn filling along a slope attaches a solid torus whose meridional disc
has that boundary slope. The meridional filling recovers $(S^3,\,K)$.
A boundary homeomorphism extends over the attached solid torus when it
preserves the meridian slope. The longitude coefficient controls the
orientation of its core. In particular, an exterior homeomorphism with
boundary matrix $I$, the $2\, \times \,2$ identity matrix in the ordered meridian-longitude bases, extends to an oriented equivalence of the knots.

The Gordon--Luecke theorem, \cite{GordonLuecke}, recognizes the underlying
unoriented knot from its oriented exterior. Allowing arbitrary exterior
homeomorphisms also allows for an ambient reflection. For discrete group
recognition, a minimal-genus Seifert surface makes a nontrivial exterior
sufficiently large, meaning that it contains a properly embedded two-sided
incompressible surface other than a sphere or disc. Waldhausen's
realization theorem then applies to a
peripheral-preserving isomorphism. The exact ordered marks give the matrix $I$,
so filling preserves both orientations
\cite[Corollary~6.5]{Waldhausen}.
Reversing the knot direction inverts both peripheral elements;
mirroring while transporting its direction inverts the meridian only.
Thus, the natural reversed-mirror map has
\begin{equation}\label{eq:dagger-marks}
 \mu_K\ \longmapsto\ \mu_{K^\dagger},\qquad
 \lambda_K\ \longmapsto\ \lambda_{K^\dagger}^{-1}.
\end{equation}

The connected sum joins two knots after removing small oriented arcs.
A nontrivial knot is prime if it is not a connected sum of two
nontrivial knots. Schubert's uniqueness theorem uses oriented summands
\cite{Schubert}. A connected-sum sphere meets the knot twice and becomes
an annulus in its exterior; even a composite exterior is irreducible.

\subsection{Completion, closure, and peripheral topology}

The inverse limit defining $\widehat G$ is a closed subgroup of the
product of its finite quotients, with the product topology. It is a
compact Hausdorff totally disconnected group. A profinite group is a
group of this form. Its open normal subgroups form a basis of
neighbourhoods of the identity element, and it is the inverse limit of the associated
finite quotients. A discrete group is residually finite if every
nonidentity element survives in some finite quotient. Knot groups, and
more generally compact $3$-manifold groups, are residually finite
\cite{HempelRF,AFW}.

A homomorphism $f\,:\,G\,\longrightarrow\, H$ induces a continuous map
$\widehat f\,:\,\widehat G\, \longrightarrow\,\widehat H$. More generally, any homomorphism
from $G$ to a profinite group extends uniquely to its completion: extend
in each finite continuous quotient and take the inverse limit.
Completion preserves direct products, since finite images of the two
factors commute. A profinite group is topologically finitely generated
if finitely many elements generate a dense subgroup. Such a group has
only finitely many maps to each finite group and finitely many open
normal subgroups of any bounded index. Knot completions have this
property because knot groups are finitely presented.

For $H\,\le\, G$, write $\overline H$ for its closure in $\widehat G$.
A subgroup is separable if it is closed in the profinite topology of
$G$.
The induced topology on $H$ is full if each finite-index subgroup of
$H$ contains $H\cap W$ for some finite-index subgroup $W\,\le\, G$.
Fullness implies identification $\widehat H$ with $\overline H$; separability alone
does not assert this. For incompressible toral boundary, we use the following statement.

\begin{thm}[{\cite[Lemma~6.1]{Xu25}}]\label{thm:peripheral-completion}
For a compact orientable $3$-manifold $M$ with incompressible toral
boundary, and a boundary torus $T$, inclusion identifies
$\widehat{\pi_1(T)}$ with $\overline{\pi_1(T)}\,\le\,\widehat{\pi_1(M)}$.
In particular, this closed subgroup is isomorphic to $\Zh^2$.
\end{thm}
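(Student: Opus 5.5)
The plan is to combine the fullness criterion stated just before the theorem with the separability and subgroup-topology results for $3$-manifold groups. Put $G\,=\,\pi_1(M)$ and $P\,=\,\im\bigl(\pi_1(T)\to G\bigr)$. Incompressibility of $T$ makes $\pi_1(T)\to G$ injective by the Loop Theorem, so $P\,\cong\,\Z^2$. The natural map $\widehat{\pi_1(T)}\,=\,\widehat P\,\longrightarrow\,\widehat G$ has image exactly $\overline P$: the image is compact and contains $P$ as a dense subset. Hence the statement reduces to injectivity of this map, that is, to fullness of the induced profinite topology on $P$. Once fullness holds, $\overline P\,\cong\,\widehat{\Z^2}\,=\,\Zh^2$, because completion preserves direct products.

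To prove fullness, take a finite-index subgroup $A\,\le\, P$. It suffices to find a finite-index $W\,\le\, G$ with $W\cap P\,\le\, A$. Since $P$ is abelian, $A$ contains $nP$ for some $n\ge 1$, so it is enough to treat $A\,=\,nP$. Two routes are available.

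\textbf{Route (a): fillings.} Use Dehn filling along two slopes $s_1$, $s_2$ on $T$ whose lifts generate $P$ modulo $n$. Each filled manifold has residually finite fundamental group \cite{HempelRF}. This route, however, requires controlling the images of $P$ in the fillings, and that control is delicate.

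\textbf{Route (b): separability and virtual retraction.} This is the route I would follow. For irreducible $M$, a separable peripheral subgroup of a $3$-manifold group is a virtual retract after passing to a finite cover. The required input is that peripheral subgroups are separable and that $G$ is LERF-on-abelian subgroups; in the hyperbolic and mixed cases this comes from Agol and Wise via virtual specialness and \cite{AFW}, and in the graph-manifold case from Wilton--Zalesskii. Given a finite-index $G_0\,\ge\, P$ and a retraction $r\,:\,G_0\to P$, set $W\,=\,r^{-1}(nP)$. Then $W$ has finite index in $G_0$, hence in $G$, and $W\cap P\,=\,nP$. This gives fullness immediately.

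The main obstacle is the virtual-retraction step, since graph manifolds need not be virtually special. For that case I would fall back on the efficiency of the JSJ graph of groups \cite{WZGraph,WilkesJSJ}. Fullness of the topology on each vertex group and each edge group comes from Seifert pieces, where $P$ sits inside a virtual product $F\times\Z$ and the statement is elementary, and from hyperbolic pieces, handled by the special case above. Fullness then transfers to the boundary torus because $P$ lies in a single vertex group. Since the paper cites \cite[Lemma~6.1]{Xu25} directly, I would present this as a sketch reducing to these standard efficiency results rather than reproving them.
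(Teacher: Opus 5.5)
Your reduction is correct: the image of $\widehat P\to\widehat G$ is $\overline P$, so the statement amounts to fullness of the induced topology on $P\cong\Z^2$, and it suffices to treat finite-index subgroups of $P$. The gap is in the key step of Route~(b). There is no general principle that a separable subgroup of a $3$-manifold group, peripheral or not, becomes a virtual retract; virtual retraction is strictly stronger than separability. For instance, in the Heisenberg group, which is the fundamental group of a closed Nil manifold, the central fibre subgroup is separable, since finitely generated nilpotent groups are LERF. Yet in every finite-index subgroup some nontrivial power of the fibre is a commutator, so there is no retraction onto it. Virtual specialness does not by itself give retractions onto the cusp tori either, and the graph-manifold fallback through JSJ efficiency is only a sketch. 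You also leave reducible $M$ untreated, although the statement does not assume irreducibility. The obstacle you name is misplaced as well: a graph manifold with nonempty boundary is virtually special by Przytycki--Wise.

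The missing observation is that the input you already list is enough on its own. This is the route the paper takes through \cite[Lemma~6.1]{Xu25}: Hamilton's abelian subgroup separability \cite{Hamilton}, which holds for every compact orientable $3$-manifold. Every finite-index $A\le P$ is abelian, hence separable in $G$. Let $p_1,\dots,p_k$ represent the nontrivial cosets of $A$ in $P$. Choose finite-index $W_j\le G$ with $A\le W_j$ and $p_j\notin W_j$, and put $W=\bigcap_j W_j$. If $W$ met some coset $p_jA$, then, since $A\le W$, it would contain $p_j$; hence $W\cap P=A$. This is fullness, so $\widehat P\to\overline P$ is an isomorphism and $\overline P\cong\Zh^2$. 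No virtual retraction, virtual specialness, JSJ efficiency, or case distinction is needed.
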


The proof of Theorem \ref{thm:peripheral-completion} uses abelian subgroup
separability \cite{Hamilton}. In particular,
\begin{equation}\label{eq:completed-periphery}
 \overline{P_K}\cong\Zh^2,\qquad
 \overline{\langle\mu_K\rangle}\cong\Zh,\qquad
 \overline{\langle\lambda_K\rangle}\cong\Zh\quad(K\ne U).
\end{equation}
The vector $(a,\,b)$ denotes $\mu_K^a\lambda_K^b$.

\subsection{Powers, normal closures, and abelianization}\label{subsec:powers}

For the ring of profinite integers,
$\Zh\,\cong\,\prod_p\Z_p$, where $\Z_p\,=\,\varprojlim_k\Z/p^k\Z$.
A unit is invertible in every factor. An ordinary integer is a unit
precisely when it is $1$ or $-1$. Although $\Zh$ has zero divisors,
multiplication by a nonzero ordinary integer is injective.
For $z$ in a profinite group, the finite-quotient powers defining $z^u$
give the continuous map $\Zh\,\longrightarrow\,\overline{\langle z\rangle}$.
A closed group generated topologically by one element is procyclic.
When it is a copy of $\Zh$, its generators are exactly the unit powers
of any chosen generator \cite{RibesZalesskii}.

Write $\langle\!\langle S\rangle\!\rangle_G$ for the normal subgroup
generated by all conjugates of $S$, and denote by $\overline{\langle\!\langle S\rangle\!\rangle}_G$
its closed normal closure in a profinite group. The following holds without assuming that $G/\langle\!\langle S\rangle\!\rangle_G$ is residually finite.

\begin{lem}\label{lem:quotient-completion}
For any subset $S\,\subseteq\, G$,
\[
 \widehat{\,G/\langle\!\langle S\rangle\!\rangle_G\,}\ \cong\ \widehat G/
 \overline{\langle\!\langle S\rangle\!\rangle}_{\widehat G}.
\]
\end{lem}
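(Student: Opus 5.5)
The plan is to compare both sides through their universal properties with respect to maps into finite groups, and to check that the natural comparison map is a topological isomorphism. Put $N\,=\,\langle\!\langle S\rangle\!\rangle_G$, $\overline N\,=\,\overline{\langle\!\langle S\rangle\!\rangle}_{\widehat G}$ and $Q\,=\,G/N$, and write $\iota\,:\,G\,\to\,\widehat G$ for the canonical map. We do not assume $\iota$ injective, nor that $Q$ is residually finite. Residual finiteness is never used in the argument, since everything passes through finite quotients.

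First I construct the map $\widehat Q\,\to\,\widehat G/\overline N$. The composite $G\,\to\,\widehat G\,\to\,\widehat G/\overline N$ kills every conjugate of $\iota(S)$, so it kills $N$ and descends to a homomorphism $Q\,\to\,\widehat G/\overline N$. The target is a quotient of a profinite group by a closed normal subgroup, so it is profinite. By the universal property recalled above, this homomorphism extends uniquely to a continuous $\psi\,:\,\widehat Q\,\to\,\widehat G/\overline N$. Its image is compact and contains the image of $G$, which is dense; hence $\psi$ is surjective.

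Next I construct the inverse. The projection $G\,\to\,Q\,\to\,\widehat Q$ extends to a continuous homomorphism $\widehat\pi\,:\,\widehat G\,\to\,\widehat Q$. This map sends $\iota(S)$ to $1$. Its kernel is closed and normal, so it contains $\overline N$, and $\widehat\pi$ therefore induces a continuous $\phi\,:\,\widehat G/\overline N\,\to\,\widehat Q$.

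Finally I check that $\phi$ and $\psi$ are inverse to each other. The composites $\phi\circ\psi$ and $\psi\circ\phi$ are continuous homomorphisms. On the dense images of $Q$ in $\widehat Q$ and of $G$ in $\widehat G/\overline N$, they restrict to the identity, by tracing the definitions. Since the targets are Hausdorff, both composites are the identity. Thus $\psi$ is a continuous bijective homomorphism of compact Hausdorff groups, hence a topological isomorphism.

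The only delicate point is the density and uniqueness step. It needs the image of $G$ to be dense in $\widehat G$, which is standard, and uniqueness of continuous extensions from a dense subgroup into a Hausdorff target. I expect no real obstacle here. If one prefers a finite-quotient formulation, one can instead check that continuous maps from either side to a finite group $F$ correspond to homomorphisms $G\,\to\,F$ killing $S$. Both profinite groups then represent the same functor on finite groups, which gives the isomorphism by Yoneda restricted to profinite groups.
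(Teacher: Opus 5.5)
Your proof is correct and uses the same universal-property mechanism as the paper: maps out of either side are exactly maps from $G$ that kill $S$, and closedness of kernels absorbs the closed normal closure. The paper compares the finite-quotient systems directly, which is the functor-on-finite-groups alternative you mention at the end; your main argument is a more explicit version of the same idea, building $\psi$ and $\phi$ and checking on dense images that they are mutually inverse.
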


\begin{proof}
Homomorphisms from either side to a finite group are exactly homomorphisms from $G$
with $S$ lying in the kernel. Continuity makes the kernel closed, so the completed maps
annihilate the closed normal closure. Their compatible inverse limits agree.
\end{proof}

The meridian therefore closed normally generates $\widehat{G_K}$.
The profinite abelianization is the quotient by the closure of the
commutator subgroup and is the completion of the discrete
abelianization. Thus the extension of $\chi_K$ is
\begin{equation}\label{eq:chi-completed}
 \widehat\chi_K\,:\,\widehat{G_K}\,\longrightarrow\,\Zh,\quad
 \widehat\chi_K(\mu_K)\,=\,1,\quad\widehat\chi_K(\lambda_K)\,=\,0.
\end{equation}
Every continuous automorphism of the additive group $\Zh$ is
multiplication by a unit. Hence an isomorphism with meridian image
conjugate to $\mu_L^\alpha$ satisfies the condition
\begin{equation}\label{eq:chi-compatible}
 \widehat\chi_L\Phi\,\ =\,\ \alpha\widehat\chi_K.
\end{equation}

\subsection{Integral boundary maps}

A continuous homomorphism between finite powers of $\Zh$ is
$\Zh$-linear, by continuity and density of integer scalars. A completed
boundary isomorphism has matrix $A\,\in\,\GL_2(\Zh)$; we call it integral
when its entries lie in $\Z$. Since its determinant is then an integer
unit,
\begin{equation}\label{eq:integral-intersection}
 \GL_2(\Zh)\cap M_2(\Z)\ =\ \GL_2(\Z).
\end{equation}
Here $M_2(R)$ is the set of $2$ by $2$ matrices over a ring $R$.

\begin{lem}\label{lem:scalar}
If $a,\,b\,\in\,\Z\setminus\{0\}$ and $u\,\in\,\Zh^\times$ satisfy $a\,=\,bu$,
then $a\,=\,\epsilon b$ and $u\,=\,\epsilon$ for $\epsilon\,\in\,\{1,\,-1\}$.
Consequently, if $A,\,B$ are integral matrices with $B\,\ne\,0$ and
$A\,=\,\eta B$, where $\eta\,\in\,\Zh^\times$, then $\eta\,=\,\pm1$.
\end{lem}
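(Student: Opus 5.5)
The plan is to compare images in the finite quotients $\Z/m\Z$ and use the Chinese remainder description $\Zh\cong\prod_p\Z_p$. Since $a=bu$ with $u$ a unit, the element $a$ generates the same ideal of $\Zh$ as $b$. I would first show that this forces $|a|=|b|$. Reduce modulo $m=|b|$: then $a\equiv bu\equiv 0 \pmod{|b|}$, so $b\mid a$. Symmetrically, $b=au^{-1}$ gives $a\mid b$. Both $a$ and $b$ are nonzero integers, so $a=\epsilon b$ with $\epsilon\in\{1,-1\}$.

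Next I would obtain $u=\epsilon$. From $bu=a=\epsilon b$ we get $b(u-\epsilon)=0$ in $\Zh$. Section~\ref{subsec:powers} records that multiplication by a nonzero ordinary integer is injective on $\Zh$. This holds because $\Zh\cong\prod_p\Z_p$ and each $\Z_p$ is a torsion-free domain. Hence $u=\epsilon$.

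For the matrix statement, I would choose an entry with $B_{ij}\ne0$. Then $A_{ij}=\eta B_{ij}$.
\begin{itemize}
\item If $A_{ij}\ne0$, the scalar case applies directly and gives $\eta=\pm1$.
\item If $A_{ij}=0$, then $\eta B_{ij}=0$, and injectivity gives $\eta=0$. This contradicts $\eta$ being a unit, since $\Zh$ is not the zero ring.
\end{itemize}
So $\eta=\pm1$ in all cases.

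No step is a real obstacle. The only point needing care is the divisibility step, where I argue modulo $|b|$ rather than working inside $\Zh$. There is also a subtlety that $\Zh$ has zero divisors, so cancellation must go through the stated injectivity of multiplication by nonzero integers and not through any domain property of $\Zh$.
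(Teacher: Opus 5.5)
Your proof is correct and follows the paper's outline: you show $|a|=|b|$, cancel the nonzero integer $b$ in $b(u-\epsilon)=0$, and then pass to a nonzero entry. The one difference is that you get $|a|=|b|$ from mutual divisibility by reducing modulo $|b|$ and $|a|$, while the paper compares $p$-adic valuations in each $\Z_p$ and uses unique factorization; your route is slightly more elementary, and your separate treatment of the case $A_{ij}=0$ makes explicit a step the paper leaves implicit.
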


\begin{proof}
In each $\Z_p$, the $p$-adic valuations of $a,\,b$ agree because $u$ is a
unit: these valuations are the exponents of $p$ in $|a|,\,|b|$. Unique
factorization gives $|a|\,=\,|b|$. Cancel the nonzero integer
$b$ in $b(u-\epsilon)\,=\,0$. For matrices apply this to a nonzero entry.
\end{proof}

\begin{lem}\label{lem:integral-vectors}
If $A\,\in \,M_2(\Zh)$ and $C,\,D\,\in\, M_2(\Z)$, with
$\det C\,\ne\,0$, satisfy $AC\,=\,D$, then $A\,\in \,M_2(\Z)$. If $A$ is invertible over $\Zh$,
it belongs to $\GL_2(\Z)$.
\end{lem}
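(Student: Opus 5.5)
Since $\det C\ne 0$, the plan is to invert $C$ over $\mathbb{Q}$ and to compare the entries of $A$ with rational numbers inside $\Zh$. Put $c=\det C\in\Z\setminus\{0\}$ and let $C^{\mathrm{adj}}\in M_2(\Z)$ be the adjugate, so that $CC^{\mathrm{adj}}=cI$. Multiplying $AC=D$ on the right by $C^{\mathrm{adj}}$ gives
\[
 cA\ =\ DC^{\mathrm{adj}}\ \in\ M_2(\Z).
\]
Each entry $a_{ij}\in\Zh$ therefore satisfies $c\,a_{ij}=n_{ij}$ for some integer $n_{ij}$.

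The key step is to show that $c$ divides $n_{ij}$ in $\Z$. Reduce the equation $c\,a_{ij}=n_{ij}$ modulo $|c|$, using the canonical surjection $\Zh\to\Z/|c|\Z$. The left side maps to $0$, so $n_{ij}\equiv 0\pmod{|c|}$. Writing $n_{ij}=c\,m_{ij}$ with $m_{ij}\in\Z$, we get $c(a_{ij}-m_{ij})=0$ in $\Zh$. As recalled in Subsection~\ref{subsec:powers}, multiplication by the nonzero ordinary integer $c$ is injective on $\Zh$, because $\Zh\cong\prod_p\Z_p$ and each $\Z_p$ is a domain of characteristic zero. Hence $a_{ij}=m_{ij}\in\Z$, and so $A\in M_2(\Z)$.

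For the second assertion, if $A$ is in addition invertible over $\Zh$, then $A\in\GL_2(\Zh)\cap M_2(\Z)$, and \eqref{eq:integral-intersection} gives $A\in\GL_2(\Z)$. Explicitly, $\det A$ is an integer that is a unit of $\Zh$, hence $\pm1$, by the remark that an ordinary integer is a profinite unit only when it is $1$ or $-1$.

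The only delicate point is the divisibility step: one must use reduction modulo $|c|$ together with the absence of $\Z$-torsion in $\Zh$, rather than dividing by $c$, since $c$ need not be invertible in $\Zh$. I do not expect any further obstacle.
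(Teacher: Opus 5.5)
Your proposal is correct and follows essentially the same route as the paper: multiply by the integral adjugate to get $(\det C)A\in M_2(\Z)$, reduce modulo $|\det C|$ to see that each entry is $\det C$ times an integer, cancel using injectivity of multiplication by a nonzero integer on $\Zh$, and then invoke \eqref{eq:integral-intersection} for invertibility.
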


\begin{proof}
For $d\,=\,\det C$, multiplying by the integral adjugate gives
$dA\,=\,D\operatorname{adj}(C)\,\in\, M_2(\Z)$. Reduction modulo $|d|$ yields
$\Z\cap d\Zh\,=\,d\Z$. Each entry of $dA$ is therefore $d$ times an integer;
injectivity of multiplication by $d$ proves the assertion.
Using \eqref{eq:integral-intersection} invertibility follows.
\end{proof}

\section{Marked completions and finite data}\label{sec:finite-data}

Here a representation means a group homomorphism. A finite
presentation gives finite representation sets in a finite group by assigning images to generators and checking that all relations are satisfied. For a knot
diagram, the Wirtinger generators are meridians of its arcs and the
crossing relations are conjugations; assigning target elements obeying
these relations is a group colouring. Evaluating the preferred
longitude word supplies Eisermann's additional information.

For $P\,\in\,\Z[Q]$, write $\coeff_y(P)$ for the coefficient of $y$.
Multiplication in this ring is the bilinear extension of multiplication
in $Q$. Then
\begin{equation}\label{eq:polynomial-coefficient}
 \coeff_y(P_Q^x(K))\,=\,
 \#\{\rho:G_K\to Q\,\,\big\vert\,\,\rho(\mu_K)\,=\,x,\ \rho(\lambda_K)\,=\,y\}.
\end{equation}
Only $y\,\in\, C_Q(x)\cap[Q,\,Q]$ can occur, because the marks commute and the
longitude is a commutator element; here $C_Q(x)$ is the centralizer
of $x$. Summing the coefficients forgets the
longitude distribution. Changing the boundary path is of no concern:
precomposing the representation with the inverse inner automorphism gives a bijection preserving their prescribed meridian and longitude images.

For a topologically finitely generated profinite group $P$ with ordered
string $\mathbf a\,=\,(a_1,\,\cdots,\,a_k)$, let $\mathcal Q(P,\mathbf a)$ be the
set of finite marked quotient classes. For $\mathbf x\,\in\, Q^k$, define
\[
 \begin{split}
 H_P(Q;\mathbf x)&\,=\,\#\{f:P\to Q\text{ continuous}\,\,\big\vert\,\,f(\mathbf a)=\mathbf x\},\\
 E_P(Q;\mathbf x)&\,=\,\#\{f:P\onto Q\text{ continuous}\,\,\big\vert\,\,f(\mathbf a)=\mathbf x\}.
 \end{split}
\]
These counts are finite. A common conjugation of all marks can be
absorbed into the isomorphism of the group.

\begin{prop}\label{prop:finite}
For topologically finitely generated profinite groups $(P,\,\mathbf a)$
and $(R,\,\mathbf c)$ with the same number of marks, the following three
statements are equivalent:
\begin{enumerate}
\item There is a continuous isomorphism $P\, \longrightarrow\, R$ taking
$\mathbf a$ to $\mathbf c$;

\item $\mathcal Q(P,\,\mathbf a)\,=\,\mathcal Q(R,\,\mathbf c)$;

\item $H_P(Q;\,\mathbf x)\,=\,H_R(Q;\,\mathbf x)$ for every finite $Q$ and ordered string
$\mathbf x$.
\end{enumerate}
\end{prop}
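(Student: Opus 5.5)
The plan is to prove the cycle (i)$\Rightarrow$(iii)$\Rightarrow$(ii)$\Rightarrow$(i). This is the marked version of the classical fact that a topologically finitely generated profinite group is determined by its finite quotients.

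\emph{(i)$\Rightarrow$(iii).} Composition with a marked isomorphism $\Phi\colon P\to R$ gives a bijection
\[
\{f\colon R\to Q \mid f(\mathbf c)=\mathbf x\}\ \longrightarrow\ \{f\colon P\to Q \mid f(\mathbf a)=\mathbf x\},\qquad f\mapsto f\circ\Phi .
\]
Its inverse is composition with $\Phi^{-1}$. Continuity is preserved in both directions, so the counts $H$ agree.

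\emph{(iii)$\Rightarrow$(ii).} First recover $E$ from $H$ by M\"obius inversion over subgroups. Every continuous $f\colon P\to Q$ with $f(\mathbf a)=\mathbf x$ is a surjection onto a unique subgroup $S=f(P)\le Q$, and $S$ necessarily contains $\mathbf x$. Hence
\[
H_P(Q;\mathbf x)\ =\ \sum_{\mathbf x\subseteq S\le Q}E_P(S;\mathbf x).
\]
Induction on $|Q|$ then determines $E_P(Q;\mathbf x)$ from the $H_P$ values, and the same formula holds for $R$. So (iii) gives $E_P=E_R$ for all $(Q,\mathbf x)$. A marked class $[(Q,\mathbf x)]$ lies in $\mathcal Q(P,\mathbf a)$ exactly when $E_P(Q;\mathbf x)>0$, which yields (ii).

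\emph{(ii)$\Rightarrow$(i).} This is the main step, and it uses a compactness argument. For each $n$, let $U_n$ be the intersection of all open normal subgroups of $P$ of index at most $n$. There are finitely many such subgroups by topological finite generation, so $U_n$ is open and characteristic. The $U_n$ decrease, and $P=\varprojlim P/U_n$; define $V_n\le R$ likewise.

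Fix $n$. By (ii), the marked quotient $(P/U_n,\bar{\mathbf a})$ is isomorphic to some marked quotient $(R/N,\bar{\mathbf c})$. Then $R/N$ is residually of index at most $n$, so $V_n\le N$. Symmetrically, $(R/V_n,\bar{\mathbf c})\cong(P/M,\bar{\mathbf a})$ with $U_n\le M$. Comparing orders gives
\[
|P/U_n|\ \ge\ |R/V_n|\ \ge\ |P/U_n| ,
\]
so equality holds throughout, $N=V_n$, and the set $I_n$ of marked isomorphisms $P/U_n\to R/V_n$ is finite and nonempty.

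Reduction modulo $U_{n-1}$ sends $I_n$ to $I_{n-1}$. This works because an isomorphism carries the characteristic subgroup defined by ``intersection of normal subgroups of index at most $n-1$'' in $P/U_n$, namely $U_{n-1}/U_n$, to the corresponding subgroup $V_{n-1}/V_n$ of $R/V_n$. An inverse limit of nonempty finite sets is nonempty, so there is a compatible system $(\phi_n)$. Its limit $\Phi=\varprojlim\phi_n$ is a continuous isomorphism $P\to R$ taking $\mathbf a$ to $\mathbf c$.

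The delicate point is checking that $U_{n-1}/U_n$ is indeed that characteristic subgroup. This holds because normal subgroups of index at most $n-1$ in $P/U_n$ correspond to those of $P$ containing $U_n$, and every normal subgroup of index at most $n-1$ contains $U_n$.

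The remark about absorbing a common conjugation shows that isomorphism of marked strings up to simultaneous conjugacy is the same as exact preservation of the marks. So the proposition applies to $\Pk K$ and $\Wk K$ in the sense used in \eqref{eq:exact-marking}.
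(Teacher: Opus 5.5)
Your proof is correct. For (i)$\Rightarrow$(iii) and (iii)$\Rightarrow$(ii) it is the same as the paper's: precomposition, then the subgroup recursion recovering $E$ from $H$.

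For (ii)$\Rightarrow$(i) you take a genuinely different route.

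\emph{The paper's route.} It runs compactness over \emph{all} open normal $U\trianglelefteq_o R$, using only that marked epimorphisms $P\onto R/U$ exist. This produces a continuous marked surjection $f\colon P\onto R$; surjectivity holds because the compact image maps onto every finite quotient. The same construction in the other direction gives $g\colon R\onto P$. The Hopfian property of $P$, applied to $gf$, then shows that $f$ is injective.

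\emph{Your route.} You restrict to the cofinal chain of characteristic quotients $P/U_n$ and $R/V_n$. The order comparison $|P/U_n|\ge|R/V_n|\ge|P/U_n|$ forces the marked quotient matched with $P/U_n$ to be exactly $R/V_n$. Taking an inverse limit of the finite nonempty sets $I_n$ of marked \emph{isomorphisms} then yields $\Phi$ directly.

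\emph{Comparison.} Both arguments rest on the same finiteness fact: there are finitely many open normal subgroups of index at most $n$, and one takes their intersection. The paper uses this fact inside its Hopficity proof. You fold it into the compactness step itself.
\begin{itemize}
\item Your version builds the isomorphism in one step over a countable chain. It needs only that an inverse sequence of nonempty finite sets has a nonempty limit, and requires no separate surjectivity or Hopfian argument.
\item The paper's version has a simpler compactness step, since it never has to identify which quotient of $R$ matches a given quotient of $P$. The cost is the additional Hopfian step.
\end{itemize}
Your key checks are correct: $V_n\le N$ because $R/N\cong P/U_n$ has trivial intersection of normal subgroups of index at most $n$, and the reduction $I_n\to I_{n-1}$ is well defined.
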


The unmarked principle is classical \cite{DFPR}. We include the marked
compactness argument because no transition maps are part of (ii).

\begin{proof}
Precomposition proves that (i) implies (ii) and (iii). It is clear that
\begin{equation}\label{eq:epi-recursion}
 E_P(Q;\,\mathbf x)\ =\ H_P(Q;\,\mathbf x)
 -\sum_{\substack{S<Q\\x_1,\ldots,x_k\in S}}E_P(S;\,\mathbf x);
\end{equation}
the sum is over actual subgroups, including different embeddings of
isomorphic subgroups. Induction on $|Q|$ recovers the epimorphism counts.
A marked quotient occurs precisely when its epimorphism count is
positive, thus proving that (iii) implies (ii).

Assume that (ii) holds. For each open normal subgroup $U\,\trianglelefteq_o\,R$, the set
$\mathcal E_U$ of marked epimorphisms $P\,\onto \,R/U$ is finite and nonempty.
In the compact product $\prod_U\mathcal E_U$, impose compatibility after
projection whenever $V\,\subseteq\, U$. These closed conditions have the
finite intersection property: for finitely many indices $U_i$, a
marked epimorphism to $R/\bigcap_iU_i$ satisfies all listed conditions
after projection. Compactness supplies a compatible family and a
continuous marked morphism $f\,:\,P\,\longrightarrow\, R$. Its compact image is closed and
projects onto every finite quotient, so it is all of $R$.
Interchanging $P,R$ gives a marked surjection $g\,:\,R\,\onto\, P$.

A topologically finitely generated profinite group is Hopfian: every
continuous surjective endomorphism is injective. Indeed, for
$\theta\,:\,P\,\onto\, P$, pullback permutes the finite set of open normal
subgroups of index at most $n$. Their intersection $U_n$ satisfies
$\theta^{-1}U_n\,=\,U_n$, and hence $\ker\theta\,\subseteq\, U_n$. The intersection
of all $U_n$ is trivial. Apply this to $gf$; injectivity of $gf$ gives
injectivity of $f$. The resulting continuous bijection is a
homeomorphism by compactness, thus proving that (ii) implies (i).
\end{proof}

For one mark the proposition identifies $\Wk K$, $\Qwk K$ and all
$N_Q(z;K)$; for two marks it identifies $\Pk K$, $\Qk K$ and all
coefficients in \eqref{eq:polynomial-coefficient}. Also
\begin{equation}\label{eq:one-count-from-polynomial}
 N_Q(z;K)\ =\ \sum_{\substack{x,y\in Q\\xy=z}}\coeff_y(P_Q^x(K)).
\end{equation}
This sum can loose information in a single target group. Across all finite groups, the dictionary (Proposition~\ref{prop:finite}) recovers the one-element completion, and
Theorem~\ref{thm:one-recovery} recovers the pair.

\begin{cor}\label{cor:finite-witness-algebraic}
If the one-element or ordered marked completions of two knots are
nonisomorphic, some finite marked quotient occurs for only one of them,
and some prescribed-image representation count differs.
\end{cor}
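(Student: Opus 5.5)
The corollary follows from Proposition~\ref{prop:finite} by contraposition, applied once with a single mark and once with two marks.

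\emph{Finite marked quotient.} Suppose $\Wk K\not\cong\Wk L$. Take $(P,\mathbf a)=(\widehat{G_K},w_K)$ and $(R,\mathbf c)=(\widehat{G_L},w_L)$. Knot groups are finitely presented, so these completions are topologically finitely generated and the proposition applies. Since (i) fails, (ii) fails as well, so $\mathcal Q(P,\mathbf a)\ne\mathcal Q(R,\mathbf c)$. Some marked finite quotient $(Q,z)$ therefore occurs for one knot and not the other. Every continuous surjection $\widehat{G_K}\onto Q$ restricts to a surjection $G_K\onto Q$, because $G_K$ is dense and $Q$ is discrete. Conversely, every finite quotient of $G_K$ extends to $\widehat{G_K}$. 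Hence $\mathcal Q(\widehat{G_K},w_K)=\Qwk K$, and this gives a finite marked quotient occurring for only one of the knots.

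\emph{Representation count.} Condition (iii) also fails, so $H_P(Q;z)\ne H_R(Q;z)$ for some finite $Q$ and some $z\in Q$. By the universal property of the completion, continuous maps $\widehat{G_K}\to Q$ correspond bijectively to homomorphisms $G_K\to Q$, and this bijection preserves the image of $w_K$. Thus $H_P(Q;z)=N_Q(z;K)$, so some $N_Q(z;K)\ne N_Q(z;L)$.

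\emph{Ordered marks.} The argument is the same with $\mathbf a=(\mu_K,\lambda_K)$ and $\mathbf c=(\mu_L,\lambda_L)$. Failure of (ii) yields an element of $\Qk K\triangle\Qk L$. Failure of (iii) yields $Q$ and $(x,y)$ with $H(Q;(x,y))$ different for the two knots. By \eqref{eq:polynomial-coefficient} this count equals $\coeff_y(P_Q^x)$, so $P_Q^x(K)\ne P_Q^x(L)$.

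One point needs checking: isomorphism of markings in \eqref{eq:exact-marking} allows a common conjugation by $g$, while the proposition asks for exact equality of marks. These agree, because composing $\Phi$ with $\Inn(g^{-1})$ turns the first into the second, as noted before Proposition~\ref{prop:finite}. The marked quotient classes and the counts are likewise unchanged under simultaneous conjugation. Beyond this, the argument is direct bookkeeping.
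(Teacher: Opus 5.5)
Your proposal is correct and follows the paper's own route: the paper proves this corollary simply by taking contrapositives in Proposition~\ref{prop:finite}. Your extra bookkeeping makes explicit what the paper records in the remark after that proposition, namely identifying continuous marked quotients and counts of $\widehat{G_K}$ with those of $G_K$, matching the counts with $N_Q$ and the colouring-polynomial coefficients, and absorbing the common conjugator.
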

\begin{proof}
Take contrapositives in Proposition~\ref{prop:finite}.
\end{proof}

\section{JSJ structure and rooted restrictions}\label{sec:jsj}

\subsection{Pieces and root types}\label{subsec:hyperbolic-background}

Cutting along a two-sided torus in a three manifold produces two boundary copies. A piece
is a connected compact manifold remaining after such cuts; it may have
several boundary tori and need not be a knot exterior. The boundary copies created by cutting the JSJ tori are internal boundaries; the original torus $\partial E_K$ is the exterior boundary. A manifold is atoroidal if every incompressible torus is boundary-parallel.
The JSJ theorem gives a canonical minimal family of essential tori
cutting an irreducible orientable manifold with incompressible toral
boundary into Seifert or atoroidal pieces
\cite{JacoShalen,Johannson}. Hyperbolization makes the non-Seifert
pieces of a knot exterior hyperbolic \cite{ThurstonSurvey,ThurstonHaken};
see also geometrization \cite{PerelmanEntropy,PerelmanSurgery} and the
modern account \cite[Chapter~1]{AFW}.

A hyperbolic piece is a compact core of a complete finite-volume
hyperbolic $3$-manifold, obtained by truncating its torus cusps.
A cusp is an end homeomorphic to $T^2\times[0,\infty)$; its fundamental group is a
rank-two peripheral subgroup. The interior group is a torsion-free
nonuniform lattice in $\PSL_2(\C)$. Mostow--Prasad rigidity theorem realizes a
discrete isomorphism of such groups by one isometry of interiors, up to
a basepoint conjugation, and then by a homeomorphism of compact cores
\cite{Mostow,Prasad}. Realizing a given profinite map is a separate
role of Theorem~\ref{thm:xu-genuineness}.

A Seifert fibration decomposes a manifold into circle fibres with
fibred solid-torus neighbourhoods. A regular fibre has multiplicity one;
an exceptional fibre of multiplicity $m$ gives a cone point of order
$m$ in the quotient surface, called the base orbifold. More precisely,
if $B$ is the underlying surface, and $p_j$ are the cone points of
orders $m_j$, set $B^\circ\,=\,B\setminus\{p_j\}$. Then
\[
 \pi_1^{\mathrm{orb}}(B)\ \,=\ \,
 \pi_1(B^\circ)/\langle\!\langle a_j^{m_j}:j\rangle\!\rangle,
\]
where $a_j$ is a based loop around the removed point $p_j$.
The orbifold Euler characteristic is
$\chi(B)-\sum_j(1-1/m_j)$. The fibre subgroup is the kernel of the map
to this orbifold group. It is infinite cyclic and central when the
base is orientable, as it is for all the pieces below.

In the meridian--longitude basis $(m,\,\ell)$ of an unknotted solid torus,
a primitive curve $p[m]+q[\ell]$ is the torus knot $T(p,q)$; it is
nontrivial when $|p|,\,|q|\,\ge\,2$. For a companion $J$, pushing the curve
$p[m_J]+q[\lambda_J]$ into $\nu J$ gives a $(p,\,q)$ cable, with
$\gcd(p,\,q)\,=\,1$ and $|q|\,\ge\,2$. Removing its neighbourhood from $\nu J$
gives a cable space. A nontrivial prime knot is a satellite when its exterior contains an
essential torus, equivalently an essential companion torus in a
nontrivial pattern construction. A pattern is a knot inside a solid
torus; placing it in a companion neighbourhood need not make that torus
essential.
Cables of nontrivial companions are satellites. Cables of the unknot
are torus knots, and can be unknotted when $|p|\,=\,1$.

The Seifert pieces in a minimal knot-exterior JSJ decomposition are
as follows (see \cite[Theorem~4.18]{Budney}):
\begin{enumerate}
\item torus-knot exteriors, with base a disc with two cone points;
\item cable spaces, with base an annulus with one cone point;
\item composing spaces $\mathcal C_r\,=\,\Sigma_{0,r+1}\times S^1$,
where $\Sigma_{0,r+1}$ is a disc with $r$ holes and $r\,\ge\,2$.
\end{enumerate}
All have negative orbifold Euler characteristic and are called major
Seifert pieces. Thickened tori and twisted interval bundles over the
Klein bottle are absent. The dual graph has a vertex for each piece and
an edge for each cut torus. Every such torus separates in $S^3$, so this
graph is a finite tree. Its root contains $T_K$.

For a nontrivial prime knot, the root is a torus-knot exterior, a cable
space, or a hyperbolic piece. A torus-knot exterior has no children and
a cable root has one. A composite knot has a composing root, with one
child branch for each prime summand. Each branch is the full summand
exterior, possibly with its own satellite decomposition
\cite[Proposition~4.17, Theorem~4.18 and Corollary~4.19]{Budney}.
Thus nontrivial prime knots are torus, hyperbolic, or satellite knots.

A graph-manifold has only Seifert pieces; a graph knot has such an
exterior. A mixed exterior has both types. A maximal Seifert block is
the union of a maximal connected subtree of Seifert vertices. Its
boundary consists of interfaces with hyperbolic pieces and the exterior
torus if present. Seifert--Seifert interfaces lie inside the block.

\subsection{Fibre recognition and the outer slope}

A characteristic subgroup is preserved by every automorphism. A normal
subgroup is virtually central when its centralizer has finite index.
For a major Seifert group completion, the fibre closure is the unique
maximal virtually central normal procyclic subgroup
\cite[Theorem~5.5(1)]{WilkesSeifert} (see also
\cite[Proposition~8.2(1)]{Xu25}). It is therefore transported by every
continuous isomorphism.

\begin{lem}\label{lem:fibre-slope}
For a nontrivial prime knot with Seifert root, a regular fibre on the
exterior boundary, oriented to follow the knot positively, has elements
\begin{equation}\label{eq:prime-fibre}
 h_K\ =\ \mu_K^{c_K}\lambda_K,\quad c_K\ \in\ \Z,\quad |c_K|\ \ge\ 2.
\end{equation}
Its closure is characteristic in the root completion and is a copy of
$\Zh$.
\end{lem}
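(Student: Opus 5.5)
By the classification recalled above, a Seifert root is either a torus-knot exterior or a cable space. I will treat the two cases separately and then prove the characteristic and closure assertions together.

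\emph{Torus-knot root.} Here $K=T(p,q)$ with $|p|,|q|\ge2$. The exterior is Seifert fibred and the regular fibre on $T_K$ is parallel to the curve $p[m]+q[\ell]$ pushed onto $\partial\nu K$. The standard computation compares the framing from the unknotted torus with the Seifert framing. The boundary fibre is a parallel copy of $K$ whose linking number with $K$ is $pq$. So in homology of $T_K$ it equals $pq\,\mu_K+\lambda_K$, up to sign, and the sign is fixed by orienting it along $K$. I would confirm this with the linking-number characterization of Section~\ref{sec:topology}: the $\lambda_K$-coefficient is the intersection with a meridian disc, which is $1$. The $\mu_K$-coefficient is the linking number of the fibre with $K$, namely $pq$. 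Thus $c_K=pq$ and $|c_K|\ge4$.

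\emph{Cable root.} Here $K$ is the $(p,q)$ cable of a nontrivial companion $J$, with $|q|\ge2$. The root is the cable space. The regular fibre on $T_K$ is again a push-off of $K$ lying on the cabling torus $\partial\nu J$, where it is parallel to $p[m_J]+q[\lambda_J]$. Its linking number with $K$ is computed inside $\nu J$ by the same winding argument, and equals $pq$. Hence $h_K=\mu_K^{pq}\lambda_K$. The convention $\gcd(p,q)=1$ together with $|q|\ge2$ is not quite enough here, since a cable with $|p|=1$ might seem to give $|c_K|=|q|$. In fact $c_K=pq$ in every case. A cable space with $|p|\le1$ is not a genuine cable space: $p=0$ is impossible by primitivity with $|q|\ge2$, so the fibred solid torus has an exceptional fibre of multiplicity $|p|$. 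For the piece to be major we therefore need $|p|\ge2$, and in any case $|pq|\ge2$. The main obstacle I expect is precisely this sign and framing bookkeeping, together with excluding degenerate parameters. I would handle it by citing \cite[Theorem~4.18]{Budney} for the parameter ranges, and by computing the linking number of the boundary fibre directly via a Seifert surface of $K$ built from $q$ copies of a Seifert surface of $J$, or a disc when $J$ is trivial.

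\emph{Characteristic closure.} Since the fibre is an element of $\pi_1(T_K)$, the closure of $\langle h_K\rangle$ sits inside $\overline{P_K}\cong\Zh^2$ by Theorem~\ref{thm:peripheral-completion}. The vector $(c_K,1)$ is primitive, so its closure is a direct factor isomorphic to $\Zh$. The fibre subgroup is central and infinite cyclic in the root group. By \cite[Theorem~5.5(1)]{WilkesSeifert} its closure is the unique maximal virtually central normal procyclic subgroup of the root completion. It is therefore characteristic. Finally, the closure of the boundary fibre coincides with the closure of the full fibre subgroup, because any regular fibre is conjugate, in fact equal up to orientation, to the generator of the central fibre subgroup.
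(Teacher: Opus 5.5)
Your argument is essentially the paper's. You split into torus-knot and cable roots and note that the boundary fibre is a push-off of $K$, so its longitude coefficient is $1$ and its meridian coefficient is its linking number $pq$ with $K$. Characteristicity then comes from Wilkes's fibre recognition, and $\overline{\langle h_K\rangle}\cong\Zh$ from primitivity of $(c_K,1)$ in $\overline{P_K}\cong\Zh^2$.

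One passage in the cable case is wrong and should be deleted. In the convention $p[m_J]+q[\lambda_J]$, the regular fibres wind $q$ times around the core of $\nu J$. The exceptional fibre of the cable space therefore has multiplicity $|q|$, not $|p|$. The base is an annulus with one cone point of order $|q|$, and the piece is major as soon as $|q|\ge 2$.

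So cables with $|p|=1$ (winding number $|q|$, one meridional twist) are genuine cable spaces and do occur as JSJ roots. This contradicts your claim that majorness forces $|p|\ge 2$. Nor does $|p|=1$ threaten the lemma, since then $|c_K|=|q|\ge 2$; the worry that $\gcd(p,q)=1$ and $|q|\ge 2$ is ``not quite enough'' is unfounded.

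The correct justification is your own fallback, which is exactly what the paper uses. Since $p\ne 0$ by primitivity and $|q|\ge 2$, it follows that $|c_K|=|pq|\ge 2$.
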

\begin{proof}
The root is a torus-knot exterior or cable space. On the boundary around
the knot, a neighbouring regular fibre is a push-off following the knot
once, so its longitude coefficient is $+1$. Its meridian coefficient
is the linking number in this fibre framing. In the fixed cable
convention this is $pq$: linking between two parallel torus levels is
bilinear, with the meridian--meridian and zero-framed
longitude--longitude terms zero and the cross terms $1,\,0$.
Expanding $pm+q\ell$ gives $pq$ (see
\cite[Proposition~3.8]{Budney}). Since $p\,\ne\,0$ and $|q|\,\ge\,2$ for a cable,
and both absolute values are at least $2$ for a nontrivial torus knot,
we have $|c_K|\,\ge\,2$. Here the fibre slope is computed on the boundary of a neighbourhood of the cable knot. The above fibre-recognition  result shows that its closure is characteristic. The primitive vector $(c_K,\,1)$ in $\Zh^2$ identifies its closure
with $\Zh$.
\end{proof}

At a composing root the fibre is instead $\mu_K$. Hence, for any fixed
integer $n\,\ne\,0$, the element $\mu_K\lambda_K^n$ lies outside every
Seifert root fibre closure. At a prime root, membership would require
$(1,\,n)\,=u\,(c_K,1)$, giving $u\,=\,n$ and $1\,=\,nc_K$, which is impossible.
At a composing root the nonzero longitude coordinate suffices.

\subsection{Completed graphs of groups}

Piece groups $G_v$, torus groups $G_e$, and incidence injections
$\iota_{e,v}\,:\,G_e\,\into\, G_v$ form a graph of groups. For a tree its
fundamental group is the free product of vertex groups subject to
$\iota_{e,u}(z)\,=\,\iota_{e,v}(z)$ at each edge $z$. Van Kampen identifies it
with the manifold group; see \cite{SerreTrees}. The two-vertex case is
an amalgamated free product. The analogous profinite construction uses
free profinite products and closed normal relations, and is universal
for continuous maps. Consequently
\[
 \widehat{A*_CB}\ \cong\ \widehat A\amalg_{\widehat C}\widehat B.
\]
This pushout identity follows by comparing maps to every finite group.

The JSJ graph is efficient: its whole group is residually finite,
vertex and edge groups are separable, and their induced topologies are
full. Thus completed groups embed, and the profinite fundamental group
is the entire completion \cite[Theorem~A]{WZGraph},
\cite[Proposition~5.2]{Xu25}. The same applies to connected unions of
pieces. For a connected subtree $\Lambda$, write $M_\Lambda$ for this
union with its internal edges glued; edges to the complement become
boundary tori.

For a proper completed graph, the standard profinite tree has vertices
$\coprod_vP/\widehat{G_v}$ and edges $\coprod_eP/\widehat{G_e}$,
where $P$ is the completion. The left action has stabilizers
$g\widehat{G_v}g^{-1}$ and $g\widehat{G_e}g^{-1}$.
A profinite graph has compact totally disconnected vertex and edge
spaces and continuous incidence maps. Its tree property is exactness of
the augmented completed chain sequence
\[
 0\,\longrightarrow\,\Zh[[E]]\,\xrightarrow{\,\,\,\partial\,\,\,}\,\Zh[[V]]
 \,\xrightarrow{\,\,\,\varepsilon\,\,\,}\Zh\,\longrightarrow\,0.
\]
Here $\Zh[[X]]$ is the inverse limit of free modules on finite continuous
quotients of $X$, $\partial$ takes an edge to its terminal minus initial
vertex, and $\varepsilon$ sums the coefficients. This is a homological tree
notion; arbitrary vertices need not be joined by an ordinary finite
edge path \cite{RibesGraphs}.

\subsection{Peripheral maps and congruence}

The full peripheral structure comprises all boundary subgroup conjugacy
classes, including original and cut boundaries. Congruent completed
graph data consist of a graph isomorphism $\theta$, vertex and edge
isomorphisms $f_v,\,f_e$, and $h_{e,v}\in\widehat{G'_{\theta(v)}}$ with
\begin{equation}\label{eq:edge-congruence}
 f_v\widehat\iota_{e,v}\
 =\ \Inn(h_{e,v})\widehat\iota'_{\theta(e),\theta(v)}f_e.
\end{equation}
The same $f_e$ occurs at both ends of an edge.

For a subgroup $Q\,\le\, P$, the normalizer is
$N_P(Q)\,=\, \{g\,\,\big\vert\,\, gQg^{-1}\,=\,Q\}$. For the pieces, connected blocks, and
for nontrivial knot exteriors here, distinct boundary closures are
nonconjugate and
\begin{equation}\label{eq:normalizer}
 N_{\widehat{\pi_1(X)}}(\overline{\pi_1(T)})\ =\ \overline{\pi_1(T)}.
\end{equation}
These assertions follow from \cite[Proposition~6.7]{Xu25}, since the manifolds considered here are irreducible and neither a thickened torus nor the orientable $I$--bundle over the Klein bottle. For hyperbolic pieces, they also follow from the stronger malnormality statement in \cite[Lemma~6.2]{Xu25}.

If $\Inn(a)f(\overline P)\,=\,\overline Q$, define the normalized boundary
map as $f_T\,=\,\Inn(a)f|_{\overline P}$. Any other carrying element differs
by an element normalizing $\overline Q$, which by
\eqref{eq:normalizer} belongs to the abelian group $\overline Q$ and acts
trivially. Thus, $f_T$ is well defined.

\subsection{Rooted restrictions}

Wilton--Zalesskii establish efficiency and profinite detection of
geometric types and decompositions \cite{WZGraph,WZGeometries,WZ};
Wilkes supplies the compact boundary setting \cite{WilkesJSJ}.
We use the following consequences in Xu's formulation.

\begin{prop}\label{prop:jsj}
Let $M,\,N$ be nontrivial knot exteriors, and let
$f\,:\,\widehat{\pi_1(M)}\,\longrightarrow\,\widehat{\pi_1(N)}$
be a continuous isomorphism carrying the closure of the exterior peripheral subgroup of $M$ to a conjugate of that of $N$. Denote their JSJ trees respectively by $\Gamma_M,\, \Gamma_N$, rooted at the
vertices containing $\partial M,\,\partial N$. For a connected subtree $\Lambda\,\subseteq\,\Gamma_M$, let
$M_\Lambda$ be the union of its vertex pieces, glued along the edges of $\Lambda$.
Then the following three hold:
\begin{enumerate}
\item The isomorphism $f$ induces an isomorphism of rooted trees
$\theta\,:\,\Gamma_M\,\longrightarrow\,\Gamma_N$ and congruent data
$(\theta,(f_v),\,(f_e),\,(h_{e,v}))$ satisfying \eqref{eq:edge-congruence}.
The vertex correspondence preserves hyperbolic and Seifert
types. Each $f_v$ preserves the full peripheral structure
of its piece, including both boundaries arising from
JSJ tori and any original exterior boundary.

\item For every connected subtree $\Lambda$ with
$\Lambda'\,=\,\theta(\Lambda)$, the restricted congruent data
induce a continuous isomorphism
\[
 f_\Lambda\ :\ \widehat{\pi_1(M_\Lambda})\
 \longrightarrow\ \widehat{\pi_1(N_{\Lambda'})}.
\]
This isomorphism preserves the conjugacy classes of all boundary
subgroups of the block. These boundaries consist of the original
exterior boundary, when the root belongs to $\Lambda$, and the
tori corresponding to edges with exactly one endpoint in
$\Lambda$.

The map $f_\Lambda$ is obtained from the restricted graph data;
it is not asserted to be the literal restriction of $f$ to an
arbitrarily based block subgroup.

\item Let $\Lambda,\,\Upsilon$ be disjoint connected subtrees joined
by an edge $e$. Write
\[
 j_e\ :\ T_{e,\Lambda}\ \longrightarrow\ T_{e,\Upsilon}
\]
for the source gluing map between the two boundary copies
of the corresponding JSJ torus, and write $j'_{\theta(e)}$
for the corresponding target gluing map.
Let $A_{\Lambda,e}$ and $A_{\Upsilon,e}$ be the normalized
boundary maps induced by $f_\Lambda$ and $f_\Upsilon$ respectively.
Then
\begin{equation}\label{eq:block-boundary-compatibility}
 A_{\Upsilon,e}\circ\widehat{(j_e)_*}\ =\
 \widehat{(j'_{\theta(e)})_*}\circ A_{\Lambda,e}.
\end{equation}
Thus the two maps agree under the source and target
gluing identifications.

If $\Lambda$ contains the root, the normalized boundary
map of $f_\Lambda$ on $\partial M$ coincides with the normalized
boundary map of $f$ on $\partial M$, as a map
\[
 \widehat{\pi_1(\partial M)}\ \longrightarrow\
 \widehat{\pi_1(\partial N)}.
\]
All normalized maps are understood using fixed peripheral
representatives and the associated boundary-group
identifications.
\end{enumerate}
\end{prop}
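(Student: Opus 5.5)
The plan is to obtain Proposition~\ref{prop:jsj} from the profinite JSJ machinery of Wilton--Zalesskii and Wilkes in Xu's congruent-graph formulation, adding the rooted and peripheral bookkeeping ourselves. Throughout we use efficiency of the JSJ graph of groups for $M$ and $N$ \cite[Theorem~A]{WZGraph}, \cite[Proposition~5.2]{Xu25}. By efficiency, $\widehat{\pi_1(M)}$ is the fundamental group of the completed graph of groups, and it acts on the standard profinite tree with stabilizers the conjugates of $\widehat{G_v}$ and $\widehat{G_e}$.

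For (i), we invoke the profinite JSJ rigidity theorem \cite{WZ,WilkesJSJ}, as packaged in Xu \cite{Xu25}. A continuous isomorphism of completions of irreducible manifolds with incompressible toral boundary carries each vertex stabilizer to a conjugate of a vertex stabilizer. It therefore induces a graph isomorphism $\theta$, together with congruent data $(f_v,f_e,h_{e,v})$ satisfying \eqref{eq:edge-congruence}. Detection of geometric type gives the preservation of the hyperbolic/Seifert dichotomy \cite{WZGeometries}; for Seifert pieces one can also use the characteristic fibre closure \cite{WilkesSeifert}. Peripheral structure preservation on each piece for the JSJ tori comes from the edge congruence. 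The remaining task is the original boundary $\partial M$. Here we use the hypothesis that $f$ carries $\overline{P_M}$ to a conjugate of $\overline{P_N}$. The closed subgroup $\overline{P_M}\cong\Zh^2$ fixes a vertex of the profinite tree, namely a conjugate of the root stabilizer. It fixes no edge, since otherwise it would lie in an edge torus group, and this is excluded by \eqref{eq:normalizer} and the nonconjugacy of distinct boundary closures \cite[Proposition~6.7]{Xu25}. Hence $\theta$ sends root to root, and $f_{\mathrm{root}}$ carries the exterior boundary class to the exterior boundary class.

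For (ii), we define $f_\Lambda$ by the universal property of the profinite fundamental group of the restricted graph of groups over $\Lambda$. Efficiency applies to $M_\Lambda$, so this group is $\widehat{\pi_1(M_\Lambda)}$. We send $\widehat{G_v}$ via $\Inn(k_v)\circ f_v$, where the conjugators $k_v$ are chosen inductively along a spanning tree rooted anywhere in $\Lambda$. The choice is made so that the $h_{e,v}$ are absorbed and the edge relations hold; since $\Lambda$ is a tree, no loop obstruction arises. The same construction on the target, using the inverse data, yields the inverse map, so $f_\Lambda$ is an isomorphism. Each boundary torus of $M_\Lambda$ lies in a single vertex group, so preservation of its conjugacy class follows from (i).

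For (iii), the edge relation \eqref{eq:edge-congruence} at $e$ with the common $f_e$ gives \eqref{eq:block-boundary-compatibility} once we translate through the conjugators $k_v$. Normalized boundary maps are insensitive to these conjugators by the well-definedness argument following \eqref{eq:normalizer}. For the root statement, we compare $f$ and $f_\Lambda$ on the root vertex group. They differ by an inner automorphism, so by \eqref{eq:normalizer} their normalized maps on $\overline{\pi_1(\partial M)}$ agree. The main obstacle is the careful conjugator bookkeeping in (ii)--(iii) and verifying the root identification. For the latter we would first try the fixed-point argument on the profinite tree, which relies on $\Zh^2$ fixing a vertex and on the acylindricity-type facts in \cite[Proposition~6.7]{Xu25}.
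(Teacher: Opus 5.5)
Your proposal is correct and follows essentially the paper's route: Xu's congruent-graph theorem for (i), restriction of the congruent data for (ii), and absorption of all carrying conjugators through the self-normalization \eqref{eq:normalizer} for (iii). The only difference is that you argue directly where the paper cites Xu. You obtain root placement by the profinite-tree fixed-point argument (as in Lemma~\ref{lem:word-centralizer}) instead of \cite[Lemma~6.8]{Xu25}; there, an abelian edge stabilizer at the root containing $\overline{P_N}$ lies in its normalizer and hence equals it, which is what makes your nonconjugacy contradiction go through. You obtain the block isomorphism by the inductive choice $k_v=k_uh_{e,u}h_{e,v}^{-1}$ instead of \cite[Proposition~3.11]{Xu25}.
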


\begin{proof}
Nontrivial knot exteriors are compact, orientable and
irreducible, with nonempty incompressible toral boundary.
Their JSJ graphs are efficient by
\cite[Proposition~5.2]{Xu25}.
Consequently, vertex and edge completions embed, and the
profinite fundamental group of the completed JSJ graph
identifies with the completion of the manifold group.

By \cite[Theorem~5.5]{Xu25}, the given isomorphism $f$
induces congruent completed graph data preserving the
hyperbolic and Seifert vertex types. The excluded case
of closed $\mathrm{Sol}$-manifolds does not arise here.
Since $f$ preserves the exterior peripheral conjugacy
class, \cite[Lemma~6.8]{Xu25} places the target exterior
boundary in the vertex corresponding to the source root.
Hence $\theta$ preserves the roots.
Moreover, \cite[Corollary~6.11]{Xu25} gives preservation
of the full peripheral structure at every vertex.
This proves~(i).

Restrict the congruent data to $\Lambda$ and $\Lambda'$.
By \cite[Proposition~3.11]{Xu25}, these data induce an
isomorphism of the corresponding profinite fundamental
groups. The connected blocks are themselves irreducible
with incompressible toral boundary, and efficiency
identifies these groups with
$\widehat{\pi_1(M_\Lambda)}$ and
$\widehat{\pi_1(N_{\Lambda'})}$ respectively.

On each vertex, the induced block map agrees with $f_v$
up to conjugation in the target block.
An original exterior boundary is therefore matched by
the vertex peripheral correspondence. A boundary exposed
by cutting an edge leaving $\Lambda$ is matched by
\eqref{eq:edge-congruence}. These account for every
boundary component of $M_\Lambda$, thus proving~(ii).

We next explain the pointwise compatibility.
For an edge $e$ incident to $v$, write
\[
 B_{e,v}\ =\ \widehat{\iota}_{e,v}(\widehat G_e)
 \subseteq\widehat G_v,
\]
and use primed notation for the target.
The congruence identity is
\[
 f_v\widehat{\iota}_{e,v}\ =\
 \operatorname{Inn}(h_{e,v})
 \widehat{\iota}'_{\theta(e),\theta(v)}f_e.
\]
Consequently, the normalized vertex boundary map is
\[
 \left.
 \operatorname{Inn}(h_{e,v}^{-1})f_v
 \right|_{B_{e,v}}\ =\
 \widehat{\iota}'_{\theta(e),\theta(v)}
 f_e\widehat{\iota}_{e,v}^{-1}.
\]
In particular, after identifying either boundary copy
with the abstract edge group, its normalized map is
the same edge isomorphism $f_e$.

The conjugations introduced when passing from $f_v$ to
a block map can be absorbed into the boundary carrying
element. Any two carrying elements differ by an element
of the normalizer of the target boundary subgroup.
By \eqref{eq:normalizer}, that normalizer is the boundary
subgroup itself. Since this group is abelian, the
difference acts trivially by conjugation.
Thus the normalized block boundary map coincides with the
normalized vertex boundary map.
At the two ends of $e$, both these maps are represented
by $f_e$, which proves
\eqref{eq:block-boundary-compatibility}.

Finally, suppose that $\Lambda$ contains the root.
Both the global map $f$ and the block map $f_\Lambda$
agree with the root vertex map up to conjugation in
their respective target groups. Normalize the root
vertex map using a carrying element inside the target
root group, as supplied by
\cite[Lemma~6.8]{Xu25}.
Absorbing the additional conjugations gives this same
normalized boundary map for both $f$ and $f_\Lambda$.
Self-normalization makes it independent of the carrying
choices. Hence the two exterior boundary maps agree
on every element of $\widehat{\pi_1(\partial M)}$.
\end{proof}

\begin{lem}\label{lem:normalization}
In the setting of Proposition~\ref{prop:jsj}, let $V \leq \widehat{\pi_1(M)} \text{ and }\, W \leq \widehat{\pi_1(N)}$ be the completed vertex groups of the root pieces, with representatives chosen to contain the exterior peripheral closures $P$ and $Q$, respectively. Normalize $f$ by one inner automorphism so that $f(P)\,=\,Q$. Then $f(V)\,=\,W$, and its restriction
$\psi\,=\,f\big\vert_V \,: \,V \,\longrightarrow \,W$  is a continuous isomorphism whose boundary restriction is $\psi\vert_P\, = \,f\vert_P$. In particular, if the normalized $f$ preserves
the exact meridian and longitude, so does $\psi$.
\end{lem}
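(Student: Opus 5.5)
Our plan is to use Proposition~\ref{prop:jsj}(i) to identify where $f$ sends the root vertex group, and then the self-normalizing property \eqref{eq:normalizer} to pin down the conjugator.

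By Proposition~\ref{prop:jsj}(i), $\theta$ preserves roots. The congruent data give a root vertex isomorphism $f_{v_0}$ which agrees with $f$ on $V$ up to conjugation. Hence $f(V)=xWx^{-1}$ for some $x\in\widehat{\pi_1(N)}$; this is the standard property that the isomorphism of profinite trees induced by $f$ is $f$-equivariant and carries vertex stabilizers to vertex stabilizers. Since $P\le V$, we get
\[
 Q\ =\ f(P)\ \le\ xWx^{-1},\qquad\text{so}\qquad x^{-1}Qx\ \le\ W .
\]
Both $x^{-1}Qx$ and $Q$ are closures of the exterior boundary group of $N$ lying in $W$.

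The key step is to show that $x$ may be taken in $W$. We will use the profinite Bass--Serre tree of the JSJ graph of $N$. The group $Q\cong\Zh^2$ fixes the vertex $W$ and also the vertex $xW$. In a profinite tree, the fixed set of a closed subgroup is a subtree (Ribes--Zalesskii). So $Q$ fixes every edge of the geodesic joining these two vertices. If $xW\ne W$, then $Q$ fixes some edge at $W$, so $Q$ lies in a conjugate of an edge group $g\widehat{G_e}g^{-1}$ with $g\in W$. The exterior boundary closure and the closures of JSJ boundary tori of the root piece are distinct and nonconjugate in $W$ (the fact quoted before \eqref{eq:normalizer}, via \cite[Proposition~6.7]{Xu25}). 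Both are maximal abelian closed subgroups isomorphic to $\Zh^2$, so an inclusion $Q\le g\overline{\pi_1(T_e)}g^{-1}$ would force equality, which is a contradiction. Hence $xW=W$, i.e.\ $x\in W$, and $f(V)=W$.

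It follows that $\psi=f|_V$ is a continuous bijection of compact groups, hence a continuous isomorphism. Since $P\le V$, we have $\psi|_P=f|_P$ literally. If the normalized $f$ sends $\mu_M\mapsto\mu_N$ and $\lambda_M\mapsto\lambda_N$, then $\psi$ does as well.

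We expect the main obstacle to be the fixed-subtree step, that is, excluding the case where $Q$ fixes two distinct vertices. This needs the acylindricity-type fact that the exterior peripheral closure is not contained in any conjugate of an edge group. If the tree argument proves awkward, we will fall back on \cite[Lemma~6.8]{Xu25}, which already locates the exterior boundary in the target root vertex with a carrying element inside $W$. Combined with malnormality/self-normalization \eqref{eq:normalizer}, this should absorb the discrepancy between the two conjugates.
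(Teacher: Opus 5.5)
Your argument is correct, but your main route differs from the paper's. The paper writes $f|_V=\Inn(g)f_v$ and uses \cite[Lemma~6.8]{Xu25} to choose $b\in W$ with $\Inn(b)f_v(P)=Q$. It then observes that $gb^{-1}$ normalizes $Q$ in the whole target completion $\widehat{\pi_1(N)}$. The global self-normalization \eqref{eq:normalizer} for the knot exterior gives $gb^{-1}\in Q\subset W$, hence $g\in W$.

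That is exactly your fallback. It is also what completes your observation that $x^{-1}Qx=f_v(P)$ and $Q$ both lie in $W$, once vertex-level peripheral preservation makes them conjugate \emph{inside} $W$.

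Your main route instead runs the fixed-subtree argument of Lemma~\ref{lem:word-centralizer} with $Q$ in place of $\overline{\langle w\rangle}$. If $x\notin W$, then $Q$ fixes an edge at the root vertex. So $Q$ lies in a $W$-conjugate of an internal boundary closure of the root piece. This is impossible: $N_W(Q)=Q$ makes $Q$ self-centralizing, hence maximal abelian, in $W$, so the inclusion would be an equality. That equality contradicts nonconjugacy of distinct boundary closures in $W$.

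Your version avoids Xu's Lemma~6.8 and the global normalizer statement. It uses only local facts about the root piece together with profinite Bass--Serre theory. It actually shows that $Q$ has a unique fixed vertex, which in turn implies the global self-normalization. The paper's version is shorter once Xu's lemma is available.

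Three small repairs are needed. First, profinite trees have no geodesics in general. Replace ``every edge of the geodesic'' by the minimal subtree $[W,xW]$, and use \cite[Proposition~2.1.6(c)]{RibesGraphs} to obtain an edge adjacent to $W$, as the paper does. Second, derive ``maximal abelian'' explicitly from $N_W(Q)=Q$; only the maximality of $Q$ is needed. Third, note that the single-piece case is immediate.
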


\begin{proof}
The JSJ correspondence gives $f\big\vert_V\,=\,\Inn(g)f_v$, where
$f_v\,:\,V\,\longrightarrow\, W$ is an isomorphism and $g\,\in\,\widehat{\pi_1(N)}$.
Using \cite[Lemma~6.8]{Xu25} choose $b\,\in\, W$ with
$\Inn(b)f_v(P)\,=\,Q$. Both $\Inn(g)$ and $\Inn(b)$ carry
$f_v(P)$ onto $Q$, so $gb^{-1}$ normalizes $Q$ in the
target completion. Formula~\eqref{eq:normalizer} gives
$gb^{-1}\,\in\, Q\,\subset\, W$, hence $g\,\in\, W$. Therefore,
$f(V)\,=\,gWg^{-1}\,=\,W$. Moreover, $gb^{-1}$ centralizes $Q$, so
\[
 \Inn(g)f_v\big\vert_P\ =\ \Inn(b)f_v\big\vert_P.
\]
The literal root restriction consequently retains both
individual marks using the same conjugation. If the marks
were initially given up to simultaneous conjugacy, make that
one global conjugation prior to the calculation.
\end{proof}

\section{Prime branches and longitude coherence}\label{sec:constituents}

\subsection{The composing root}

For $K\,=\,K_1\#\cdots\#K_r$, with $r\,\ge\,2$, the canonical composing region is
$\mathcal C_r\,=\,\Sigma_{0,r+1}\times S^1$. Its outer boundary is $T_K$;
its inner tori $T_i$ attach the full prime exteriors $E_{K_i}$.
For $r\,=\,2$, the base is a pair of pants, and an arc between its boundary
circles times $S^1$ gives a connected-sum annulus. Nested composing
regions merge into the single canonical root
\cite[Proposition~4.17 and Corollary~4.19]{Budney}.

Choose inner loops $x_i$ and their paths so that
$d\,=\,x_1\cdots x_r$ is the outer preferred longitude. The loop directions
are the knot-longitude directions; the induced surface-boundary
orientation would invert the outer circle. Let $t$ be the positive
circle fibre. Then
\begin{equation}\label{eq:composing-discrete}
 \pi_1(\mathcal C_r)\ =\ F_r\times\langle t\rangle,
 \qquad F_r\ =\ \langle x_1,\,\cdots,\,x_r\rangle,
\end{equation}
where $F_r$ is the free group. The boundary groups are
$P_0\,=\,\langle t,\,d\rangle$ and $P_i\,=\,\langle t,\,x_i\rangle$.
On attachment, $(t,\,x_i)\,=\,(\mu_{K_i},\,\lambda_{K_i})$; on the outer torus,
$(t,\,d)\,=\,(\mu_K,\,\lambda_K)$.

Completion gives
\begin{equation}\label{eq:composing-completed}
 V_r\ =\ \widehat{\pi_1(\mathcal C_r)}\ =\ \widehat F_r\times\overline{\langle t\rangle},
 \qquad\overline{\langle t\rangle}\ \cong\ \Zh.
\end{equation}
The groups $\overline{P_i}$, $0\,\le\, i\,\le\, r$, are copies of $\Zh^2$.
The restriction of global knot abelianization is
\begin{equation}\label{eq:composing-chi}
 \chi\,:\,V_r\,\longrightarrow\,\Zh,\qquad\chi(t)\,=\,1,\qquad\chi(x_i)\,=\,0.
\end{equation}
We omit its hat in this section.

The Van Kampen theorem along the connected-sum annuli also gives
\begin{equation}\label{eq:meridional-amalgam}
 G_K\,=\,G_{K_1}*_{\langle\mu\rangle}\cdots*_{\langle\mu\rangle}G_{K_r},
 \qquad\lambda_K\,=\,\lambda_{K_1}\cdots\lambda_{K_r}
\end{equation}
\cite[Proposition~2.5]{Eisermann}. Each factor has a retraction: keep
it fixed and map every other factor by abelianization to the common
meridian. Completing the retraction embeds its completion in the whole
group. Under a profinite isomorphism preserving the exterior peripheral structure, Proposition~\ref{prop:jsj} identifies the summand factors through their canonical rooted JSJ branches.

\subsection{Eliminating meridional shears}

Boundary subgroups alone allow shears. In $\widehat F_2\times\Zh$, for
$a\,\in\,\Zh$ the automorphism given by
\begin{equation}\label{eq:shear-example}
 t\,\longmapsto\, t,\qquad x_1\,\longmapsto\, x_1t^a,\qquad x_2\,\longmapsto\, x_2t^{-a}
\end{equation}
preserves inner boundary groups and fixes $d\,=\,x_1x_2$. For $a\ne0$ it
fails to preserve $\chi$. A restriction of a global knot-group
isomorphism has the additional compatibility condition
\eqref{eq:chi-compatible}, which excludes these shears.

\begin{lem}\label{lem:coherence}
Let $V_r\,=\,\widehat F_r\times\overline{\langle t\rangle}$
and $V_r'\,=\,\widehat F_r'\times\overline{\langle t'\rangle}$
be composing-space groups, with inner generators
$x_i$ and $y_j$, outer words
$d\,=\,x_1\cdots x_r$ and $d'\,=\,y_1\cdots y_r$, and
normalized maps $\chi,\,\chi'$ as in
\eqref{eq:composing-chi}. Suppose a continuous
isomorphism $\psi\,:\,V_r\,\longrightarrow\, V_r'$ satisfies the following:
\begin{enumerate}
\item $\psi(t)\,=\,t'^\alpha$, with $\alpha\,\in\,\Zh^\times$;

\item $\psi$ permutes the conjugacy classes of the inner boundary subgroups;

\item $\chi'\psi\ =\ \alpha\chi$;

\item $\psi(d)$ is conjugate to $d'^\beta$, with $\beta\,\in\,\Zh^\times$.
\end{enumerate}
Then there are a permutation $\sigma$ and elements
$b_i\,\in\, V_r'$ such that
\[
 \psi(x_i)\ =\ b_i y_{\sigma(i)}^\beta b_i^{-1}.
\]
The pair $(t,\,x_i)$ is therefore carried to
$(t'^\alpha,\, y_{\sigma(i)}^\beta)$ with one
common conjugator for each $i$.
\end{lem}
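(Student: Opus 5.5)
The plan is to read off each $\psi(x_i)$ inside a conjugate of a target inner boundary group, and then pin down its two coordinates. The $t'$-coordinate is removed by hypothesis (iii), and the $y$-exponents are forced to agree by abelianizing the free factor and using hypothesis (iv).

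\emph{Locating $\psi(x_i)$.} By (ii) there are a map $\sigma$ and elements $b_i\in V_r'$ with $\psi(\overline{P_i})=b_i\overline{P'_{\sigma(i)}}b_i^{-1}$. Distinct inner boundary closures are nonconjugate by \eqref{eq:normalizer} and the remarks before it. Since $\psi$ is a bijection on the set of inner boundary conjugacy classes, $\sigma$ is a permutation.

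Because $\overline{\langle t'\rangle}$ is central, $\overline{P'_j}=\overline{\langle y_j\rangle}\times\overline{\langle t'\rangle}\cong\Zh^2$. Its elements are exactly the $y_j^{c}t'^{a}$ with $c,a\in\Zh$, using Theorem~\ref{thm:peripheral-completion} for the composing piece. Hence
\[
 \psi(x_i)\ =\ b_i\,y_{\sigma(i)}^{\beta_i}t'^{a_i}\,b_i^{-1}
\]
for some $\beta_i,a_i\in\Zh$. By (i), $\psi(t)=t'^{\alpha}=b_it'^{\alpha}b_i^{-1}$, using centrality. So the restriction $\psi|_{\overline{P_i}}$, written in the bases $(t,x_i)$ and $(t',y_{\sigma(i)})$ after conjugating by $b_i^{-1}$, has the upper triangular matrix $\left(\begin{smallmatrix}\alpha&a_i\\0&\beta_i\end{smallmatrix}\right)$. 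This restriction is an isomorphism onto a copy of $\Zh^2$, so its determinant $\alpha\beta_i$ is a unit, and therefore $\beta_i\in\Zh^\times$.

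\emph{Killing the shear.} Apply $\chi'$ and use (iii). Since $\chi'$ is conjugation invariant, $\chi'(y_j)=0$ and $\chi'(t')=1$, we get
\[
 a_i\ =\ \chi'\psi(x_i)\ =\ \alpha\chi(x_i)\ =\ 0 .
\]
This is exactly the step that excludes the shears \eqref{eq:shear-example}.

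\emph{Equalizing the exponents.} Pass to the profinite abelianization $V_r'^{\mathrm{ab}}=\Zh^r\oplus\Zh$, with basis the images $e_j$ of $y_j$ together with the image of $t'$. Conjugations disappear in this quotient. So the image of $\psi(d)=\prod_i\psi(x_i)$ is $\sum_i\beta_i e_{\sigma(i)}$. By (iv), the same image equals $\beta\sum_j e_j$. Comparing coefficients, and using that $\sigma$ is a bijection, gives $\beta_i=\beta$ for every $i$. Together with $\psi(t)=b_it'^{\alpha}b_i^{-1}$, this shows that the pair $(t,x_i)$ goes to $(t'^\alpha,y_{\sigma(i)}^\beta)$ with the single conjugator $b_i$.

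I expect the main care to be needed in the first step. There one must justify that $\overline{P'_j}$ splits as the product of two closed procyclic copies of $\Zh$, so that the coordinates $(\beta_i,a_i)$ are well defined. One must also check that the permutation property of $\sigma$ genuinely follows from nonconjugacy of the distinct boundary closures. The remaining two steps are direct computations in abelian quotients.
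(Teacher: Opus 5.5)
Your proposal is correct and follows essentially the same route as the paper. You place $\psi(x_i)$ in a conjugate of the target boundary group, use the triangular boundary matrix to make the $y$-exponent a unit, kill the $t'$-coordinate with (iii), and equalize the exponents by abelianizing and applying (iv). You abelianize all of $V_r'$ rather than first quotienting by the central fibre, which is equivalent once the shear has vanished, and you add a justification that $\sigma$ is a permutation, which the paper takes for granted.
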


\begin{proof}
By (ii), choose a permutation $\sigma$ and elements $b_i$ with
\[
 \psi(\overline{P_i})\ =\ b_i\overline{P'_{\sigma(i)}}b_i^{-1}.
\]
This fixes the direction of the conjugation: the normalized
boundary map is $\Inn(b_i^{-1})\psi|_{\overline{P_i}}$.
The target boundary group is abelian and is
freely generated as a $\Zh$-module by $t'$
and $y_{\sigma(i)}$. Consequently
\begin{equation}\label{eq:inner-general-form}
 \psi(x_i)\ =\ b_i t'^{\rho_i} y_{\sigma(i)}^{u_i}b_i^{-1}
 \qquad(\rho_i,\,u_i\ \in\ \Zh).
\end{equation}
The image of $t$ is $t'^\alpha$ and is central,
so after this same carrying conjugation the
boundary map has matrix
\[
 \begin{pmatrix}\alpha&\rho_i\\0&u_i\end{pmatrix}.
\]
It is invertible over $\Zh$, and $\alpha$ is
a unit. Its determinant $\alpha u_i$ is a
unit, so $u_i$ is a unit as well.

Apply $\chi'$ to \eqref{eq:inner-general-form}.
Conjugation disappears under a homomorphism to
an abelian group, and $\chi'(y_j)\,=\,0$. Thus
\[
 \rho_i\ =\ \chi'\psi(x_i)\ =\ \alpha\chi(x_i)\ =\ 0.
\]

Next quotient by the central fibre and abelianize
the free profinite base. Its abelianization is
$\Zh^r$ with basis $[y_1],\,\cdots,\,[y_r]$.
All $b_i$ disappear in this abelianization.
Applying (iv) to the product $d$ yields
\[
 \sum_{i=1}^r u_i[y_{\sigma(i)}]\ =\ \beta\sum_{j=1}^r[y_j].
\]
Independence of the basis vectors gives
$u_i\,=\,\beta$ for every $i$. Finally $t'$ is
central, so each $b_i$ also conjugates
$t'^\alpha$ to itself. The same $b_i$ therefore
works for both members of the ordered pair.
\end{proof}

\subsection{Proof of Theorem C of introduction}

We now prove the common-exponent theorem stated
in the introduction.

\begin{proof}[Proof of Theorem~\ref{thm:constituents}]
First make the one global conjugation which
turns the two assumed peripheral equalities
into exact equalities. Since $\alpha,\,\beta$
are units, these equalities also identify
the whole exterior peripheral closure.
Proposition~\ref{prop:jsj} gives a rooted
JSJ correspondence.

The source root is the composing space $\mathcal C_r$,
with $r$ child edges and one external boundary.
The target root is Seifert, has the same
boundary count, and has at least two children.
Among the Seifert root types of knot exteriors,
only a composing space has that property.
The target knot is therefore composite, its root is $\mathcal C_s$,
and $s\,=\,r$. This also shows that being composite is detected
in the marked setting. If the source were prime and the target
composite, applying the same reasoning to the inverse would
give a contradiction.
The correspondence permutes the child edges;
call the resulting permutation $\sigma$.

By Lemma~\ref{lem:normalization}, the root
restriction $\psi$ has the exact actions
\[
 \psi(t)\ =\ t'^\alpha,\qquad \psi(d)\ =\ d'^\beta.
\]
It permutes all inner boundary subgroups.
The global completed abelianization satisfies
\eqref{eq:chi-compatible}, because the given
meridian image determines its coefficient.
Restricting that identity to the composing
root gives $\chi'\psi\,=\,\alpha\chi$.
All hypotheses of Lemma~\ref{lem:coherence}
are now verified. Its conclusion gives the
normalized map on every child torus:
\begin{equation}\label{eq:branch-boundary-map}
 t\ \longmapsto\ t'^\alpha,
 \qquad x_i\ \longmapsto\ y_{\sigma(i)}^\beta.
\end{equation}

Remove the root vertex from the JSJ tree.
The component attached at $T_i$ is the full
subtree of $E_{K_i}$, and its corresponding
component is the subtree of $E_{L_{\sigma(i)}}$.
Apply Proposition~\ref{prop:jsj}(ii) to these
connected subtrees. It gives an isomorphism
of their completed fundamental groups.
Part (iii) transfers the normalized boundary
map \eqref{eq:branch-boundary-map} from the
root side to the branch side. Under the
attachment conventions, these two curves
are exactly the meridian and preferred
longitude of the prime summand. This gives
the required $\Phi_i$ with the common
exponents $\alpha,\,\beta$. A carrying
conjugator is used for the entire boundary
torus, so its two equalities are simultaneous.
\end{proof}

\begin{cor}\label{cor:prime-profile}
An isomorphism of exact ordered marked
completions of composite knots induces
an equality of the multisets of the exact
ordered marked completions of their
nontrivial prime summands. It also matches
the full rooted JSJ branch of each summand.
\end{cor}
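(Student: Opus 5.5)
The plan is to apply Theorem~\ref{thm:constituents} with $\alpha=\beta=1$, read off the induced data on the prime summands, and then package them as multisets. Let $\Phi:\widehat{G_K}\to\widehat{G_L}$ be an isomorphism of exact ordered marked completions, so $\Phi(\mu_K)=g\mu_Lg^{-1}$ and $\Phi(\lambda_K)=g\lambda_Lg^{-1}$. Write $K=K_1\#\cdots\#K_r$ and $L=L_1\#\cdots\#L_s$ with $r\ge 2$ and all summands nontrivial prime. If $L$ were prime or trivial, the argument in the proof of Theorem~\ref{thm:constituents} would be violated. That argument says the target root is a Seifert piece with at least two children, so it must be a composing space; applied to $\Phi^{-1}$, it rules out the reverse mismatch. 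This is why both knots are composite here.

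Theorem~\ref{thm:constituents} now gives $r=s$, a permutation $\sigma$, and isomorphisms $\Phi_i:\widehat{G_{K_i}}\to\widehat{G_{L_{\sigma(i)}}}$. Each $\Phi_i$ sends $\mu_{K_i}\mapsto\mu_{L_{\sigma(i)}}$ and $\lambda_{K_i}\mapsto\lambda_{L_{\sigma(i)}}$ up to one simultaneous conjugation, because the exponents are $1$. Hence $\Pk{K_i}\cong\Pk{L_{\sigma(i)}}$ for every $i$.

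Since $\sigma$ is a bijection of $\{1,\dots,r\}$, the list $i\mapsto[\Pk{K_i}]$ is a rearrangement of $j\mapsto[\Pk{L_j}]$. So the two multisets of isomorphism classes agree, and multiplicities are preserved automatically.

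For the last sentence, the $\Phi_i$ come from Proposition~\ref{prop:jsj}(i),(ii). There the rooted tree isomorphism $\theta$ maps the subtree hanging off the child edge $T_i$ of the composing root onto the subtree hanging off $T'_{\sigma(i)}$. The restricted congruent data then give the block isomorphism of the full branch, which is $E_{K_i}$ with its own rooted JSJ tree by \cite[Corollary~4.19]{Budney}. Since $\theta$ is an isomorphism of rooted trees preserving vertex types, it matches the full rooted JSJ branch of $K_i$ with that of $L_{\sigma(i)}$, together with the vertex and edge congruence data.

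The only delicate point is the root convention for the branch. The branch $E_{K_i}$ is rooted at the vertex adjacent to $T_i$, and $\theta$ must carry this vertex to the vertex adjacent to $T'_{\sigma(i)}$. This holds because $\theta$ preserves incidence with the edge $e_i\mapsto\theta(e_i)$. The boundary torus $T_i$ becomes the exterior boundary of $E_{K_i}$, and Proposition~\ref{prop:jsj}(iii) transfers the normalized boundary map. So the marked branch completions are identified compatibly with the rooted structure, and both claims follow.
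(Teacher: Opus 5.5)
Your proposal is correct and follows the paper's own argument: specialize Theorem~\ref{thm:constituents} to $\alpha=\beta=1$, read off the multiset equality from the permutation $\sigma$, and retain the rooted branch correspondence supplied by Proposition~\ref{prop:jsj} in that proof. Your first paragraph is superfluous, since both knots are composite by hypothesis; in the unknot case it would not literally run through Proposition~\ref{prop:jsj}, though that case is excluded trivially because $\lambda_U=1$.
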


\begin{proof}
Set $\alpha\,=\,\beta\,=\,1$ in
Theorem~\ref{thm:constituents} and retain
the branch correspondence from its proof.
\end{proof}

A prime branch consisting of a single Seifert vertex is a torus-knot exterior; a single hyperbolic
vertex is a hyperbolic-knot exterior; a prime branch with internal JSJ
edges is a satellite exterior. This recovers the prime decomposition
and types before geometric recognition of the summands.

\section{Relative boundary realization}\label{sec:realization}

Relative reconstruction means realizing the boundary maps specified by
the profinite isomorphism. Knowing that corresponding pieces happen to
be homeomorphic is insufficient: their homeomorphisms must agree on every common torus. We distinguish two arithmetic properties. A boundary map
$A\,\in\,\GL_2(\Zh)$ is unit-regular if $A\,=\,\eta B$, with
$\eta\,\in\,\Zh^\times$ and $B\,\in\,\GL_2(\Z)$. It is integral when
$A\,\in\,\GL_2(\Z)$, the stronger property called peripheral regularity
(see \cite[Definition~1.3]{XuRegular}). A homeomorphism with matrix $B$ realizes
$A$ with coefficient $\eta$ when $A\,=\,\eta B$.

\subsection{Peripheral regularity and Seifert scales}

\begin{thm}[{\cite{Xu25}}]\label{thm:hyp-regular}
For any continuous isomorphism between completions of cusped hyperbolic
piece groups, its normalized cusp maps have matrices $A_i\,=\,\eta B_i$,
where $\eta\,\in\,\Zh^\times$ is one common unit and $B_i\,\in\,\GL_2(\Z)$.
\end{thm}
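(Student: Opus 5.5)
The statement is cited from \cite{Xu25}, so my plan is to reconstruct it from Theorem~\ref{thm:xu-genuineness} together with Mostow--Prasad rigidity, rather than reprove Xu's arithmetic independently. Let $X,\,Y$ be the two hyperbolic pieces, with interior groups $\Gamma,\,\Delta$. These are torsion-free nonuniform lattices in $\PSL_2(\C)$. Given a continuous isomorphism $f\,:\,\widehat\Gamma\,\longrightarrow\,\widehat\Delta$, Theorem~\ref{thm:xu-genuineness} writes $f\,=\,\Inn(h)\circ\widehat\alpha$ for some discrete isomorphism $\alpha\,:\,\Gamma\,\longrightarrow\,\Delta$. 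By Mostow--Prasad rigidity, $\alpha$ is induced, up to a basepoint conjugation, by an isometry of interiors, and hence by a homeomorphism of the compact cores.

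This homeomorphism carries each cusp torus $T_i$ to a cusp torus $T'_{\tau(i)}$. It therefore carries each peripheral subgroup $\pi_1(T_i)$ to a conjugate $c_i\pi_1(T'_{\tau(i)})c_i^{-1}$ in $\Delta$, and the induced map on these $\Z^2$ groups is some $B_i\,\in\,\GL_2(\Z)$. Passing to completions, $\widehat\alpha$ carries $\overline{\pi_1(T_i)}$ onto $c_i\overline{\pi_1(T'_{\tau(i)})}c_i^{-1}$, again with matrix $B_i$. Here I use Theorem~\ref{thm:peripheral-completion} to identify these closures with $\Zh^2$ and the integral bases.

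Composing with $\Inn(h)$ changes only the carrying element. The normalized boundary map is therefore $\Inn(a_i)\,f|_{\overline{\pi_1(T_i)}}$ for a suitable $a_i$. By the self-normalizing property \eqref{eq:normalizer} (in fact malnormality, \cite[Lemma~6.2]{Xu25}), any two carrying elements differ by an element of the abelian target cusp group. Hence the normalized map equals the one induced by $\widehat\alpha$ with carrier $c_i$, and $A_i\,=\,B_i$. This gives the conclusion with $\eta\,=\,1$, which is stronger than the statement.

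The main obstacle is that this derivation is only as strong as Theorem~\ref{thm:xu-genuineness}. The paper explicitly notes that Theorem~\ref{thm:one-recovery} is to be independent of genuineness, so the version needed there is presumably the weaker ``common unit'' form proved in \cite{Xu25} by other means. For that route I would argue as follows. Use the profinite Bianchi-type or Dehn-filling data on each cusp, or equivalently compare cusp shapes and the action on $\widehat{H_1}$. First show that each $A_i$ is unit-regular, $A_i\,=\,\eta_iB_i$, for instance via the peripheral regularity results of \cite{XuRegular}. Then show that the units $\eta_i$ coincide across cusps.

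For the equality of units, I expect the argument to go through a homological invariant that couples all cusps simultaneously. One candidate is the image of the boundary in $H_1(X;\Z)$ under the long exact sequence. Another is the cup-product or linking pairing on boundary tori. Either of these scales by $\eta_i$ on the $i$-th cusp and must transform uniformly under $f$. Lemma~\ref{lem:scalar} would then convert equalities between integral multiples into $\eta_i\,=\,\pm\eta_j$, and an orientation (determinant) comparison would fix the sign. This uniformity step is the part I expect to be hardest.
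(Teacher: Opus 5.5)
Your first argument is a correct deduction from Theorem~\ref{thm:xu-genuineness}. It is in fact the paper's Lemma~\ref{lem:hyperbolic}, which even gives $\eta=1$ with simultaneous realization. It cannot, however, serve as the proof of this statement. The paper justifies the statement by citing \cite[Theorem~7.2(1)]{Xu25}, with the cusp correspondence from \cite[Lemma~6.3]{Xu25}, against the background of Liu's homological regularity \cite{LiuAlmost}. It keeps this weaker statement separate on purpose: Lemma~\ref{lem:word-integrality} and Proposition~\ref{prop:unit-marking-regularity} invoke it so that Theorem~\ref{thm:one-recovery} and the first part of Theorem~\ref{thm:reduced} avoid genuineness. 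You saw that problem, but there is a worse one: your derivation inverts the logical order. As the outline in Appendix~\ref{subsec:xu-cusped-proof} records, Xu's proof of genuineness \emph{starts} by matching integral cusp curves via peripheral regularity. Deducing peripheral regularity from genuineness therefore presupposes a strengthening of what you want to prove, so Route~1 gives no independent justification.

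Your fallback route has a genuine gap: it has no source for one common unit. In the cited line of work the unit is global from the outset. Liu's $\Zh^\times$-regularity produces a single $\mu$ for which the induced map on the free part of $\widehat{H_1}$ is $\mu$ times an integral isomorphism; the unit is not assembled from cusp-wise units afterwards. Your candidate invariants cannot substitute for this.

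\begin{itemize}
\item The map $H_1(\partial X;\mathbb{Q})\to H_1(X;\mathbb{Q})$ has kernel of rank equal to the number of cusps (for a knot exterior the longitude dies). So $H_1(X)$ alone cannot see the full matrix $A_i$. Moreover, your claim that this image ``must transform uniformly under $f$'' is precisely a Liu-type regularity theorem, not something available for free.
\item The intersection or cup pairing on $T_i$ scales by $\det A_i=\pm\eta_i^2$. Even granting one orientation unit for all cusps, this yields only $(\eta_i/\eta_j)^2=\pm1$, hence $(\eta_i/\eta_j)^2=1$, since $-1$ is not a square in $\Z_3$. The solutions of this in $\Zh^\times$ are all of $\prod_p\{1,-1\}$, not just $\pm1$. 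Lemma~\ref{lem:scalar} needs a \emph{linear} relation $\eta_i m=\eta_j m'$ with $m,m'$ nonzero integers, which determinant data never provides.
\end{itemize}

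Finally, suppose you obtain cusp-wise regularity from \cite{XuRegular}. Then Theorem~\ref{thm:hyp-integral-regular} already gives $A_i\in\GL_2(\Z)$, which is the statement with $\eta=1$, and the uniformity step is moot. If you do not use it, your first step is unsupported.
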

This is \cite[Theorem~7.2(1)]{Xu25}, with cusp correspondence from its
Lemma~6.3. Liu's homological regularity and almost-rigidity work precedes
this boundary theorem \cite{LiuAlmost}.

\begin{thm}[{\cite{XuRegular}}]\label{thm:hyp-integral-regular}
For any continuous isomorphism between completions of cusped hyperbolic
piece groups, every normalized cusp map belongs to $\GL_2(\Z)$.
\end{thm}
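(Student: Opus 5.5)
The plan is to deduce integrality directly from Xu's genuineness theorem (Theorem~\ref{thm:xu-genuineness}), rather than from the unit-regular statement of Theorem~\ref{thm:hyp-regular}. Let $\Gamma=\pi_1(X)$ and $\Delta=\pi_1(Y)$ be the hyperbolic piece groups; they are torsion-free nonuniform lattices in $\PSL_2(\C)$. Let $f:\widehat\Gamma\to\widehat\Delta$ be the given continuous isomorphism. Theorem~\ref{thm:xu-genuineness} writes
\[
 f\ =\ \Inn(h)\circ\widehat\alpha,\qquad h\in\widehat\Delta,\qquad \alpha:\Gamma\xrightarrow{\ \cong\ }\Delta .
\]
The whole argument reduces the profinite cusp maps to the discrete cusp maps of $\alpha$, which are automatically integral.

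\textbf{Step 1: $\alpha$ preserves cusp groups.} By Mostow--Prasad rigidity, $\alpha$ is induced by an isometry of the interiors, up to a basepoint conjugation. An isometry carries cusps to cusps. Hence for each cusp subgroup $P\le\Gamma$ there are a cusp subgroup $Q\le\Delta$ and an element $c\in\Delta$ with $\alpha(P)=cQc^{-1}$.

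Alternatively, this can be argued purely group-theoretically: in a torsion-free nonuniform lattice, the maximal $\Z^2$ subgroups are exactly the maximal parabolic subgroups, since there are no $\Z^2$ subgroups of loxodromics. An abstract isomorphism therefore preserves them.

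Restricting $\alpha$ gives an isomorphism $\Inn(c^{-1})\alpha|_P:P\to Q$ of free abelian groups of rank two. Its matrix $B$ in the chosen integral bases lies in $\GL_2(\Z)$.

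\textbf{Step 2: pass to completions and compare normalizations.} By Theorem~\ref{thm:peripheral-completion}, $\overline P\cong\widehat P$ and $\overline Q\cong\widehat Q$. Since $\widehat\alpha$ extends $\alpha$ and is continuous, it sends $\overline P$ onto $c\,\overline Q\,c^{-1}$. Hence
\[
 f(\overline P)\ =\ (hc)\,\overline Q\,(hc)^{-1}.
\]
This shows two things.

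First, the cusp correspondence induced by $f$ agrees with the one induced by $\alpha$. Distinct boundary closures are nonconjugate, so the correspondence is well defined and matches the cusp correspondence used in Theorem~\ref{thm:hyp-regular}.

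Second, $hc$ is a carrying element for this cusp. The normalized map $\Inn((hc)^{-1})f|_{\overline P}$ equals $\Inn(c^{-1})\widehat\alpha|_{\overline P}$. This is the $\Zh$-linear extension of $B$, by density of $P$ in $\overline P$ and $\Zh$-linearity of continuous maps between copies of $\Zh^2$. By \eqref{eq:normalizer} and the discussion after it, the normalized boundary map does not depend on the choice of carrying element. So the normalized cusp map has matrix $B\in\GL_2(\Z)$.

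\textbf{Expected main obstacle.} The delicate point is not the arithmetic but the bookkeeping of conjugators. The profinite normalization uses some $a\in\widehat\Delta$ with $\Inn(a)f(\overline P)=\overline Q$, and this $a$ need not be discrete. I would handle this exactly as above. The element $a(hc)^{-1}$ normalizes $\overline Q$ in $\widehat\Delta$. By self-normalization \eqref{eq:normalizer}, or by the malnormality of \cite[Lemma~6.2]{Xu25}, it lies in the abelian group $\overline Q$ and so acts trivially on it. Once this is checked, integrality is immediate.

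One should also confirm that the bases used for $P$ and $Q$ are the same integral bases in which the profinite matrix is written. This holds by construction, since the completed peripheral groups are identified with $\widehat P$ and $\widehat Q$ through inclusion.
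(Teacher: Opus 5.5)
Your argument is correct if Theorem~\ref{thm:xu-genuineness} and Mostow--Prasad rigidity are taken as black boxes. It is not, however, how the paper obtains this statement. The paper simply cites \cite[Theorem~1.4]{XuRegular}, whose proof goes through the relative profinite cohomology of \cite{WilkesRelative} and does not use genuineness. What you wrote is instead, almost step for step, the proof of Lemma~\ref{lem:hyperbolic}, which ends with ``in particular each such map is integral.''

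\textbf{What your route gives.} It yields more than integrality. You obtain the cusp correspondence from $f(\overline P)=(hc)\overline Q(hc)^{-1}$ without assuming that $f$ preserves peripheral structure. You also show that all cusp maps come from one discrete isomorphism $\alpha$, hence from one homeomorphism.

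\textbf{What it costs.} It gives up independence from genuineness. The paper deliberately keeps integrality separate from it: ``their simultaneous realization by one homeomorphism of the piece uses Theorem~\ref{thm:xu-genuineness} separately.'' Moreover, the outline in Section~\ref{subsec:xu-cusped-proof} records that Xu's proof of genuineness itself begins by matching integral cusp curves via peripheral regularity. So your deduction is a valid corollary inside the paper's framework. It could not replace the cited theorem as a foundation without risking circularity.

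\textbf{A small slip.} In your bookkeeping paragraph you have $a f(\overline P)a^{-1}=\overline Q$ and $f(\overline P)=(hc)\overline Q(hc)^{-1}$. The element normalizing $\overline Q$ is therefore $ahc$, not $a(hc)^{-1}$. With that correction, self-normalization puts $ahc$ in $\overline Q$ and gives $\Inn(a)f|_{\overline P}=\Inn(c^{-1})\widehat\alpha|_{\overline P}$, as you intended.
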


This is \cite[Theorem~1.4]{XuRegular}; its proof uses the relative
cohomology framework of \cite{WilkesRelative}. It supplies integral
cusp maps. Their simultaneous realization by one homeomorphism of the
piece uses Theorem~\ref{thm:xu-genuineness} separately.

For a major Seifert piece, use a fibre--cross-section basis $(h,\,e)$ on
each boundary, with $e$ projecting once around its base boundary circle.
A peripheral-preserving completed isomorphism has matrix
\[
 \begin{pmatrix}\kappa_v&\rho_{v,T}\\0&\tau_v\end{pmatrix},
 \qquad\kappa_v,\tau_v\,\in\,\Zh^\times,\quad\rho_{v,T}\,\in\,\Zh.
\]
The diagonal units, modulo orientation signs, are independent of $T$;
$\kappa_v$ scales the fibre while $\tau_v$ the base boundary. Their pair
is the scale type. This is Wilkes's Seifert scale analysis
\cite{WilkesSeifert,WilkesGraphI} (see
\cite[Propositions~8.2 and 8.12]{Xu25}). Across a Seifert--Seifert edge,
\begin{equation}\label{eq:scale-propagation}
 \kappa_w\ =\ \pm\tau_v,\qquad\tau_w\ =\ \pm\kappa_v
\end{equation}
\cite[Lemma~9.3(2)]{Xu25}. Thus sign scales propagate through a connected
Seifert subtree; compatibility will also control the off-diagonal terms.

\begin{prop}\label{prop:automatic-regularity}
Let $\Phi\,:\,\widehat{G_K}\,\longrightarrow\,\widehat{G_L}$ be a continuous isomorphism of
nontrivial knot completions preserving the exterior peripheral subgroup
up to conjugacy. If its normalized exterior map $A_0$ is integral, then
all normalized boundary maps of its JSJ vertex maps are integral.
For the exact ordered marking, $A_0\,=\,I$.
\end{prop}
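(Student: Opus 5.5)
The argument propagates integrality from the root outward along the rooted JSJ tree supplied by Proposition~\ref{prop:jsj}, using the edge compatibility \eqref{eq:block-boundary-compatibility}. The second assertion is immediate. For the exact ordered marking, after one global conjugation $\Phi(\mu_K)=\mu_L$ and $\Phi(\lambda_K)=\lambda_L$. By Lemma~\ref{lem:normalization} and Proposition~\ref{prop:jsj}(iii), the normalized exterior map in the ordered bases is the identity, so $A_0=I$.

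\emph{Hyperbolic vertices.} By Theorem~\ref{thm:hyp-integral-regular}, every normalized cusp map of a hyperbolic vertex map is in $\GL_2(\Z)$, whatever happens elsewhere. Thus only Seifert vertices need an argument. It suffices to show that if one boundary map of a Seifert vertex $v$ is integral, then all of them are. Edge compatibility transports integrality across an edge, since the gluing maps $(j_e)_*$ are integral. Then one can induct away from the root: the root's exterior map $A_0$ is integral, and each child vertex has an integral map on its parent torus. For a hyperbolic child this is automatic; for a Seifert child it comes from the parent.

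\emph{Seifert vertices.} In fibre--cross-section bases, the boundary matrix at torus $T$ is $\begin{pmatrix}\kappa_v&\rho_{v,T}\\0&\tau_v\end{pmatrix}$, with $\kappa_v,\tau_v$ independent of $T$ up to sign. These bases differ from the knot bases only by integral changes of basis: for example $h_K=\mu_K^{c_K}\lambda_K$ at a prime root, $t=\mu_K$ at a composing root, and the integral gluings elsewhere. Hence integrality at one torus forces $\kappa_v,\tau_v\in\{\pm1\}$, by comparing with integral entries (Lemma~\ref{lem:scalar}).

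The main obstacle is controlling the off-diagonal shears $\rho_{v,T}$ at the other boundary tori. I would use the relation coming from the Seifert structure. Fix a presentation $\langle a_j,\,e_T,\,h\mid a_j^{m_j}h^{\beta_j},\ \prod a_j\prod e_T=h^{b}\rangle$. The shears of a completed isomorphism inducing the scale $(\kappa,\tau)$ satisfy a linear relation, namely the profinite Euler-number constraint $\sum_T\rho_{v,T}+(\text{cone contributions})=(\tau-\kappa)\cdot(\text{integer data})$, as in Wilkes's analysis (\cite[Proposition~8.12]{Xu25}). With $\kappa,\tau=\pm1$ the right side is integral.

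Each vertex has only one parent torus, so this relation alone does not pin down the remaining child shears individually. My first attempt would therefore be to work inside maximal Seifert blocks. At a child edge whose other side is Seifert, the fibres are not parallel, and \eqref{eq:scale-propagation} together with edge compatibility transfers $\rho_{v,T}$ to a diagonal entry of the neighbour. Since that diagonal entry is $\pm1$, $\rho_{v,T}$ is forced to be integral. At a child edge whose other side is hyperbolic, the hyperbolic side's integral map transfers integrality back through \eqref{eq:block-boundary-compatibility}. Cable spaces and torus-knot exteriors have at most one child, and composing roots have all inner shears killed by the argument of Lemma~\ref{lem:coherence}. So the Euler-number relation is needed at most to settle one leftover torus. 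The step I am least sure of is making the ``transfer to diagonal entry'' bookkeeping precise across Seifert--Seifert edges, which is where I would concentrate.
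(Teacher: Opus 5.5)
Your final argument is correct and is essentially the paper's: hyperbolic maps are integral by Theorem~\ref{thm:hyp-integral-regular}, Seifert scales are seeded as signs by $A_0$ or by a hyperbolic interface ($A_S=J'A_HJ^{-1}$) and spread by \eqref{eq:scale-propagation}, and at a Seifert--Seifert edge the common map sends the two $\mathbb Q$-independent fibre vectors to $\pm$ the target ones, $A_eC=C'\diag(\epsilon_1,\epsilon_2)$, so Lemma~\ref{lem:integral-vectors} gives integrality---this is the bookkeeping you were unsure of, and it also covers fibres meeting more than once, where $\rho_{v,T}$ appears multiplied by that intersection number rather than as a diagonal entry. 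Since every Seifert boundary torus is the exterior torus, a hyperbolic interface or a Seifert--Seifert edge, the Euler-number relation and Lemma~\ref{lem:coherence} are superfluous; your opening vertex-by-vertex reduction cannot be proved from one vertex map alone, because the shear \eqref{eq:shear-example} is the identity on the outer torus but has non-integral inner shears, so the neighbour data you bring in afterwards is essential.
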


\begin{proof}
Use Proposition~\ref{prop:jsj}. Every hyperbolic boundary map is integral
by Theorem~\ref{thm:hyp-integral-regular}. An integral Seifert boundary
matrix is upper triangular in fibre bases with determinant $\pm1$, so
both diagonal scales are signs.

For an all-Seifert exterior, $A_0$ forces both scaling factors at the root to lie in $\{1,\,-1\}$, and
\eqref{eq:scale-propagation} propagates them through the tree. Otherwise
each maximal Seifert subtree meets a hyperbolic vertex. At such an
interface, source and target gluing matrices $J,\,J'\,\in\,\GL_2(\Z)$ give
$A_S\,=\,J'A_HJ^{-1}$, so the Seifert map is integral there. This seeds sign
scales in that subtree, which then propagate.

At a Seifert--Seifert edge the two fibre slopes are independent over
$\mathbb Q$: parallel fibres would join the fibrations, contradicting JSJ
minimality. Let $C,\,C'$ have these two source and target fibre vectors
as columns. The common edge map satisfies
\[
 A_eC\ =\ C'\diag(\epsilon_1,\,\epsilon_2),
 \quad\epsilon_i\ \in\ \{1,\,-1\},\quad\det C\ \ne\ 0.
\]
Lemma~\ref{lem:integral-vectors} makes $A_e$ integral and hence an element
of $\GL_2(\Z)$. This controls its shear entries as well as scales,
consistent with \cite[Lemma~9.3(4)]{Xu25}. All other edges have a
hyperbolic endpoint and integral gluing matrices. The only remaining
piece boundary is the exterior torus with map $A_0$.
\end{proof}

For a mixed exterior, \cite[Theorem~9.2]{XuRegular} already implies
exterior regularity from peripheral preservation: its requirement that
each Seifert piece contain at most one exterior boundary component is
automatic for a knot exterior. The proposition also records the internal
edge calculation. In a graph exterior the exact root marking supplies
the initial regular boundary.

\subsection{One common coefficient on each Seifert block}

\begin{thm}[{\cite{Xu25}}]
\label{thm:xu-boundary}
Let $X,\,Y$ be graph-manifolds with nonempty
incompressible toral boundary, as occur as
Seifert blocks in knot exteriors. Let
$f\,:\,\widehat{\pi_1(X)}\,\longrightarrow\,\widehat{\pi_1(Y)}$ be
a continuous isomorphism preserving the full
peripheral structure. Suppose the normalized
boundary maps of $f$ are unit-regular on
selected components $T_1,\,\cdots,\,T_k$, where
$1\,\leq\, k\,\leq\,\#\partial X$. Then there exist
a unit $\eta\,\in\,\Zh^\times$ and two
homeomorphisms $h_+,\,h_-\,:\,X\,\longrightarrow\, Y$ such that,
on every selected component,
\begin{equation}\label{eq:xu-common-scalar}
 A_i\ =\ \eta B_{+,i}\ =\ -\eta B_{-,i}.
\end{equation}
Here $A_i$ is the prescribed normalized
profinite boundary matrix and $B_{\pm,i}$
is the matrix of $h_\pm$ on that component.
Both homeomorphisms realize the prescribed correspondence of the
selected components. The scalar and the choice of sign are shared
by the entire selected set, not chosen separately for its members.
\end{thm}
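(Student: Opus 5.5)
The plan is to reduce Theorem~\ref{thm:xu-boundary} to the vertex-by-vertex Seifert analysis, normalize by a single unit, and then build the homeomorphisms piece by piece. Apply the graph-manifold analogue of Proposition~\ref{prop:jsj}(i) to $f$. This gives a JSJ tree isomorphism $\theta\colon\Gamma_X\to\Gamma_Y$ and congruent data $(f_v,f_e,h_{e,v})$. Each $f_v$ preserves the full peripheral structure of its major Seifert piece. By the fibre recognition recalled before Lemma~\ref{lem:fibre-slope}, every $f_v$ has upper-triangular boundary matrices
\[
\begin{pmatrix}\kappa_v&\rho_{v,T}\\0&\tau_v\end{pmatrix}
\]
in fibre--cross-section bases. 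Up to signs, the diagonal scales are independent of $T$. Across internal edges they are linked by \eqref{eq:scale-propagation}.

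First step. Take a selected component $T_1$ lying on a vertex $v_1$. Unit-regularity $A_1=\eta B_1$ with $B_1\in\GL_2(\Z)$ determines $\eta$ up to sign, and forces $\kappa_{v_1}=\pm\eta$ and $\tau_{v_1}=\pm\eta$. By \eqref{eq:scale-propagation}, and because the tree is connected, every vertex then has $\kappa_v,\tau_v\in\{\eta,-\eta\}$. I then replace every vertex and edge boundary matrix $A$ by $\eta^{-1}A$. The goal is to show that all these rescaled matrices lie in $\GL_2(\Z)$. For internal edges I argue as in the proof of Proposition~\ref{prop:automatic-regularity}. The rescaled edge map carries the source pair of independent fibre vectors $C$ to $C'$ times a sign diagonal. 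Lemma~\ref{lem:integral-vectors} then makes it integral, and this also pins down the off-diagonal entries $\eta^{-1}\rho_{v,T}$ on internal tori. On selected boundary components the rescaled matrix is $B_i$, which is integral by hypothesis. On unselected boundary components I do not prescribe anything. This gives the freedom to absorb any remaining shear.

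Second step. I realize the rescaled data by homeomorphisms of the pieces. A Seifert piece with orientable base and at least one boundary component has a homeomorphism with prescribed sign data $(\epsilon_\kappa,\epsilon_\tau)$ and prescribed integral shears on its boundary tori. The only constraint is the Euler-number/total-slope relation. This requires the sum of the shears, corrected by the exceptional fibre invariants, to be fixed. By Wilkes's scale analysis, the profinite map satisfies the corresponding $\Zh$-relation among the $\rho_{v,T}$. After dividing by $\eta$, this is exactly the integral relation needed. If some boundary torus of the piece is unselected and external, its shear is free and absorbs the relation. The pieces' homeomorphisms agree on each internal torus up to isotopy, because their matrices coincide by \eqref{eq:edge-congruence} and torus maps are determined up to isotopy by their matrices. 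I then adjust them in collars and glue them to obtain $h_+$ with $A_i=\eta B_{+,i}$ on all selected components.

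Third step. Set $h_-=h_+\circ\iota$, where $\iota$ is the self-homeomorphism of $X$ that reverses the fibre and reflects the base in every piece. This map is orientation preserving and acts by $-I$ on every boundary torus, so $A_i=-\eta B_{-,i}$ holds as well. The single $\eta$ and the single sign choice are shared by the whole selected set by construction.

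The main obstacle is the second step, specifically the Euler-number constraint in the case where every boundary of a piece is internal or selected. There no free shear is available, and one must extract the exact integral relation from the profinite invariants. If this fails I would fall back on Xu's total-slope argument \cite[Section~9]{Xu25}. That argument proves that the profinite Euler numbers agree up to the factor $\kappa_v\tau_v^{-1}$, which here is a sign.
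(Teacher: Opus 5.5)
The paper gives no argument of its own for this statement. It imports it from \cite[Theorem~9.11]{Xu25} for blocks with several pieces and from \cite[Proposition~8.19]{Xu25} for a single Seifert piece. You instead reconstruct it from the vertex-level Seifert analysis, and the architecture is sound:
\begin{enumerate}
\item One selected torus pins $\kappa_v,\tau_v$ to $\pm\eta$ at every vertex by scale propagation.
\item After rescaling by $\eta^{-1}$, internal edges become integral by Lemma~\ref{lem:integral-vectors}.
\item Pieces are realized separately and glued in collars.
\item Setting $h_-=h_+\circ\iota$ handles the sign. The fibre-reversing base reflection exists on these planar pieces and glues, because $-I$ is central.
\end{enumerate}
One small correction: on a selected torus you only know $A_i=\eta_iB_i$. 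Your propagation gives $\eta_i=\pm\eta$, so the rescaled matrix is $\pm B_i$, which is still integral. The citation buys generality and a complete published proof. Your route is self-contained for the blocks occurring here, and it shows why one unit and one sign serve the whole selected set.

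The step you flag as the main obstacle does close, and without Xu's total-slope machinery. Present a piece by $c_j^{m_j}h^{\beta_j}=1$ and $\prod_jc_j\prod_Te_T=h^N$, the target likewise with primes, and write $f(e_T)$ as a conjugate of $e'^{\,\tau_v}_{\sigma(T)}h'^{\rho_{v,T}}$. Because the base has boundary, there is a homomorphism $\phi'\colon\pi_1(V')\to\frac1M\Z$, with $M=\mathrm{lcm}\,m_j$, such that $\phi'(h')=1$ and $\phi'(c'_j)=-\beta'_j/m_j$. Take $\phi'(e'_T)=0$ for all but one $T$; the remaining value is forced by the product relation. Its completion kills all conjugators. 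Moreover,
\[
 m_j\widehat{\phi'}(f(c_j))\ =\ \widehat{\phi'}(f(h^{-\beta_j}))\ =\ -\kappa_v\beta_j,
\]
whatever the exponent on the cone element. Evaluating $\widehat{\phi'}\circ f$ on the product relation gives
\[
 \sum_T\rho_{v,T}\ =\ \kappa_v\Bigl(N+\sum_j\frac{\beta_j}{m_j}\Bigr)-\tau_v\Bigl(N'+\sum_j\frac{\beta'_j}{m_j}\Bigr).
\]
Dividing by $\eta$ yields exactly the integral constraint for a fibre-preserving homeomorphism with signs $(\kappa_v/\eta,\tau_v/\eta)$ and shears $\eta^{-1}\rho_{v,T}$.

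You also need, and should state, the separate matching of exceptional invariants, without which the pieces need not be homeomorphic at all. Write $f(c_j)$ modulo the fibre as a conjugate of $c'^{\,k_j}_{\pi(j)}$. Abelianizing the planar orbifold group then forces $k_j\equiv\tau_v\pmod{m_j}$. The relation $c_j^{m_j}h^{\beta_j}=1$ then gives
\[
 \beta'_{\pi(j)}\ \equiv\ \kappa_v\tau_v^{-1}\beta_j\ =\ \pm\beta_j\pmod{m_j}.
\]
With these two facts your second step goes through as written.
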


For a block with multiple JSJ pieces this is
\cite[Theorem~9.11]{Xu25}. If the block consists of one
Seifert piece, the same simultaneous conclusion is
\cite[Proposition~8.19]{Xu25}. The exceptional boundary cases
excluded in Section~\ref{sec:jsj} ensure that the normalized boundary
maps are unambiguous.

\begin{prop}\label{prop:graph}
Let $X,\,Y$ be graph-manifold blocks as in
Theorem~\ref{thm:xu-boundary}, and let
$f\,:\,\widehat{\pi_1(X)}\to\widehat{\pi_1(Y)}$ be
a continuous isomorphism preserving full
peripheral structure. If its maps on a
nonempty selected set of boundary components
are integral, then one homeomorphism
$h\,:\,X\,\longrightarrow\,
Y$ realizes all these prescribed
boundary maps simultaneously, with
coefficient $1$.
\end{prop}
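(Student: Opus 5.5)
The plan is to apply Theorem~\ref{thm:xu-boundary} to the selected components and then use integrality to pin down the common unit. Integral maps lie in $\GL_2(\Z)\subset\GL_2(\Zh)$, so each is unit-regular with coefficient $1$. Theorem~\ref{thm:xu-boundary} therefore applies to the selected set $T_1,\dots,T_k$. It produces one unit $\eta\in\Zh^\times$ and two homeomorphisms $h_+,h_-:X\to Y$, both realizing the prescribed correspondence of the selected components, such that on every selected component
\[
 A_i\ =\ \eta B_{+,i}\ =\ -\eta B_{-,i},
\]
where the $B_{\pm,i}$ are the matrices of homeomorphisms and hence lie in $\GL_2(\Z)$.

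Next I will determine $\eta$ from a single selected component, say $T_1$, using Lemma~\ref{lem:scalar}. Here $A_1$ and $B_{+,1}$ are integral and $B_{+,1}\ne0$. The equality $A_1=\eta B_{+,1}$ then forces $\eta=\epsilon\in\{1,-1\}$; concretely, compare a nonzero entry of $B_{+,1}$, which matches a nonzero entry of $A_1$ because $\eta$ is a unit. Since $\eta$ is shared by the whole selected set, this one sign governs every $T_i$ simultaneously.

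Then I choose the homeomorphism. If $\epsilon=1$, take $h=h_+$, so $A_i=B_{+,i}$ for all $i$. If $\epsilon=-1$, take $h=h_-$, so $A_i=-\eta B_{-,i}=B_{-,i}$ for all $i$. In either case one homeomorphism $h$ has boundary matrix exactly $A_i$ on every selected component, that is, it realizes each prescribed map with coefficient $1$. It also realizes the correspondence of components, since both $h_\pm$ do.

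The only delicate point is bookkeeping rather than arithmetic. I must check that the matrices $B_{\pm,i}$ are computed in the same fixed peripheral bases and carrying conventions as the normalized maps $A_i$. Then the identity $A_i=\eta B_{\pm,i}$ is a literal matrix equality and not merely an equality up to sign or conjugation. This is guaranteed by the formulation of Theorem~\ref{thm:xu-boundary} together with the self-normalizing property \eqref{eq:normalizer}, which makes the normalized maps well defined. It is essential that the unit and the sign are common to all selected components and not chosen separately for each. Otherwise different components could demand different choices between $h_+$ and $h_-$, and no single homeomorphism would work.
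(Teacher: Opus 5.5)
Your proposal is correct and follows the paper's proof: you apply Theorem~\ref{thm:xu-boundary}, use Lemma~\ref{lem:scalar} on one selected torus to get $\eta=\pm1$, and then pick $h_+$ or $h_-$ according to that sign. As you note, the choice is uniform over all selected components because the scalar in \eqref{eq:xu-common-scalar} is shared by the whole selected set.
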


\begin{proof}
Integral maps are unit-regular, so apply
Theorem~\ref{thm:xu-boundary}. On one
selected torus, $A_i$ and $B_{+,i}$ both
belong to $\GL_2(\Z)$. Lemma~\ref{lem:scalar}
gives $\eta\,=\,\pm1$. If $\eta\,=\,1$, then choose
$h_+$. If $\eta\,=\,-1$, choose $h_-$ so
that $-\eta\,=\,1$. The same choice works
on every selected component because
\eqref{eq:xu-common-scalar} uses a single
scalar and consistent signs.
\end{proof}

\subsection{Hyperbolic genuineness and all cusp boundaries}

\begin{lem}\label{lem:hyperbolic}
Let $H,\,H'$ be compact orientable cores obtained by
truncating the cusps of noncompact complete finite-volume hyperbolic
$3$-manifolds. Let
$f\,:\,\widehat{\pi_1(H)}\,\longrightarrow\, \widehat{\pi_1(H')}$ be a continuous isomorphism
preserving full peripheral structure. Then one homeomorphism
$h\,:\,H\,\longrightarrow\, H'$ realizes all normalized boundary maps of $f$
simultaneously. In particular each such map is integral.
\end{lem}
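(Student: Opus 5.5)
The plan is to pass from the compact cores to the interiors and apply Theorem~\ref{thm:xu-genuineness}. The interiors of $H,\,H'$ are complete finite-volume hyperbolic manifolds with torus cusps. Their holonomies therefore identify $\Gamma\,=\,\pi_1(H)$ and $\Delta\,=\,\pi_1(H')$ with torsion-free nonuniform lattices in $\PSL_2(\C)$. Theorem~\ref{thm:xu-genuineness} then writes
\[
 f\ =\ \Inn(k)\circ\widehat\alpha,\qquad k\ \in\ \widehat\Delta,
\]
for a discrete isomorphism $\alpha\,:\,\Gamma\,\longrightarrow\,\Delta$. By Mostow--Prasad rigidity, $\alpha$ is induced, up to one basepoint conjugation $\Inn(\delta)$ with $\delta\,\in\,\Delta$, by an isometry of the interiors. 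An isometry carries cusps to cusps, so after truncating at equivariantly chosen horospheres it restricts to a homeomorphism $h\,:\,H\,\longrightarrow\,H'$ with $h_*\,=\,\Inn(\delta^{-1})\alpha$. Absorb $\delta$ into $k$, so that $f\,=\,\Inn(k')\widehat{h_*}$ on completions.

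Next compare boundary maps. Take a cusp torus $T$ of $H$ with its fixed peripheral subgroup $P\,=\,\pi_1(T)$, using the fixed path. The homeomorphism $h$ carries $T$ to some cusp torus $T'$ of $H'$, and $h_*(P)\,=\,cP'c^{-1}$ for some $c\,\in\,\Delta$, where $P'$ is the fixed representative for $T'$. The matrix $B_T\,\in\,\GL_2(\Z)$ of $h|_T$ in the chosen bases is, by definition, the matrix of $\Inn(c^{-1})h_*|_P$.

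Taking closures, Theorem~\ref{thm:peripheral-completion} identifies $\overline P$ with $\widehat P$. We get
\[
 f(\overline P)\ =\ k'c\,\overline{P'}\,(k'c)^{-1},
\]
so the carrying element for $f$ may be taken to be $a\,=\,(k'c)^{-1}$. Then
\[
 \Inn(a)f|_{\overline P}\ =\ \Inn(c^{-1})\widehat{h_*}|_{\overline P},
\]
which is the $\Zh$-linear extension of $B_T$. The normalized map is independent of the carrying element by \eqref{eq:normalizer}, since the malnormality of cusp subgroups applies. Hence the normalized boundary map of $f$ at $T$ equals $B_T$. It is therefore integral, and it is realized by the single map $h$ on every cusp at once.

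It remains to check that the cusp correspondence induced by $h$ agrees with the one given by $f$'s preservation of the full peripheral structure. This holds because the target boundary closures are pairwise nonconjugate, and $f(\overline P)$ is conjugate to $\overline{P'}$ with $T'\,=\,h(T)$.

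The main obstacle is bookkeeping rather than depth. The profinite conjugator $k'$ is not discrete, so one must check that the normalization removes it. The point is that the carrying element is only defined modulo the normalizer, and by \eqref{eq:normalizer} this normalizer is the abelian group $\overline{P'}$, which acts trivially by conjugation. The homeomorphism $h$ may reverse orientation; this is permitted, since the statement asks only for a homeomorphism realizing the maps. Finally, compactness of the cores guarantees that the truncation is a genuine homeomorphism of pairs respecting boundaries.
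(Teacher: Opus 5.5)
Your proposal is correct and follows essentially the same route as the paper: Xu's genuineness theorem together with Mostow--Prasad gives one homeomorphism $h$, and at each cusp the nondiscrete conjugator disappears because carrying elements differ only by elements of the self-normalizing abelian group $\overline{P'}$. The only cosmetic difference is that you exhibit the carrying element $(k'c)^{-1}$ explicitly and then invoke independence of the choice, whereas the paper starts from an arbitrary carrying element $a$ and shows directly that $ags\in\overline{Q}$.
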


\begin{proof}
The interior groups are torsion-free nonuniform lattices.
Theorem~\ref{thm:xu-genuineness} therefore gives
\[
 f\ =\ \Inn(g)\widehat\alpha,\quad
 \alpha\ :\ \pi_1(H)\ \xrightarrow{\,\,\,\cong\,\,\,}\ \pi_1(H'),\quad
 g\ \in\ \widehat{\pi_1(H')}.
\]
Mostow--Prasad rigidity theorem realizes the one discrete isomorphism $\alpha$ by
an isometry of the complete interiors, up to the basepoint
conjugation. Adjusting the cusp truncations in collars gives one
homeomorphism $h\,:\,H\,\longrightarrow\, H'$. We need to check its boundary maps agree
with the prescribed maps of $f$, not merely that it permutes the
same set of boundary components.

Fix a discrete source cusp group $P\,=\,\pi_1(T)$ and its matched
target cusp group $Q\,=\,\pi_1(T')$. Choose a carrying element $a$ so
that the normalized map is
\[
 f_T\ =\ \Inn(a)f\big\vert_{\overline P}\ :\ \overline P\ \longrightarrow\ \overline Q.
\]
The discrete isomorphism $\alpha$ takes $P$ to a discrete
conjugate of a cusp group. The target is $Q$: after taking
closures its target is conjugate to $\overline Q$ under $f$,
and distinct cusp closures are nonconjugate. Hence choose
$s\,\in\,\pi_1(H')$ with $\alpha(P)\,=\,sQs^{-1}$.

For $p\,\in\,\overline P$ we calculate
\begin{equation}\label{eq:cusp-carrying}
 f_T(p)\ =\ (ags)\bigl(s^{-1}\widehat\alpha(p)s\bigr)(ags)^{-1}.
\end{equation}
Both $f_T(\overline P)$ and
$s^{-1}\widehat\alpha(\overline P)s$ equal $\overline Q$.
Therefore, $ags$ normalizes $\overline Q$. Self-normalization
\eqref{eq:normalizer} puts $ags$ in $\overline Q$, and this
abelian group centralizes itself. Formula~\eqref{eq:cusp-carrying}
reduces to
\[
 f_T(p)\,\ =\,\ s^{-1}\widehat\alpha(p)s.
\]
This is precisely the completed boundary map of $h$ with the
corresponding path to $T'$. Its matrix belongs to $\GL_2(\Z)$.

Repeat this normalization for every cusp. The carrying elements
$a,\,s$ can depend on the cusp; $\alpha$ and its homeomorphism $h$
remain the same. Thus all cusp maps are realized by one
homeomorphism of the entire hyperbolic piece.
\end{proof}

\subsection{Interface maps and collar gluing}

Let a hyperbolic piece $H$ meet a maximal Seifert block $X$, with gluing
maps $j\,:\,T_H\,\longrightarrow\, T_X$ and $j'\,:\,T_{H'}\,\longrightarrow\, T_{X'}$. Their matrices are
$J,\,J'\,\in\,\GL_2(\Z)$. Congruent graph data and normalization give
\[
\begin{tikzcd}[column sep=large,row sep=large]
 \widehat{\pi_1(T_H)}\arrow[r,"J"]\arrow[d,"A_H"']
 &\widehat{\pi_1(T_X)}\arrow[d,"A_X"]\\
 \widehat{\pi_1(T_{H'})}\arrow[r,"J'"']
 &\widehat{\pi_1(T_{X'})}.
\end{tikzcd}
\]
Thus
\begin{equation}\label{eq:interface-integrality}
 A_XJ\ =\ J'A_H,\quad A_X\ =\ J'A_HJ^{-1}\ \in\ \GL_2(\Z).
\end{equation}
This transfers integrality from the hyperbolic side. The same diagram
with two hyperbolic sides records their compatibility. A
Seifert--Seifert torus lies inside one maximal block and is handled by
its single homeomorphism, rather than independent scalar choices.

\begin{lem}\label{lem:gluing}
Homeomorphisms of corresponding hyperbolic pieces and maximal Seifert
blocks that realize the normalized maps on all remaining interfaces
can be adjusted in disjoint boundary collars to glue. Their exterior
boundary action is retained.
\end{lem}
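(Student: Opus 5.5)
The plan is to glue one torus at a time, using only that homeomorphisms of a torus with the same matrix in $\GL_2(\Z)$ are isotopic. Fix an interface torus $T$ between a source piece $X_1$ and a piece $X_2$ (hyperbolic piece or maximal Seifert block), with gluing map $j : T_1 \to T_2$ between the two boundary copies. Let $j'$ be the corresponding target gluing. Let $h_1 : X_1 \to X_1'$ and $h_2 : X_2 \to X_2'$ be the given homeomorphisms. By hypothesis, the completed boundary map of $h_k$ on $T_k$ equals the normalized map $A_{k}$ with coefficient $1$. By Proposition~\ref{prop:jsj}(iii), equation~\eqref{eq:block-boundary-compatibility}, $A_2\circ\widehat{j_*} = \widehat{j'_*}\circ A_1$. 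So the two torus homeomorphisms $h_2|_{T_2}\circ j$ and $j'\circ h_1|_{T_1}$ from $T_1$ to $T_2'$ induce the same completed map on $\widehat{\pi_1(T_1)}\cong\Zh^2$. Both induced maps are integral, and $\Z^2\to\Zh^2$ is injective, so their matrices in $\GL_2(\Z)$ coincide. Here one must check that the basepoint paths used to define normalized maps are compatible on the two sides. Since each normalized map is independent of the carrying element by \eqref{eq:normalizer}, and the torus groups are abelian, changing paths only conjugates by an element of the abelian boundary group, which acts trivially.

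Hence the two torus homeomorphisms $T_1\to T_2'$ are isotopic. Choose an isotopy $\phi_s$ from $j'^{-1}\circ h_2|_{T_2}\circ j$ to $h_1|_{T_1}$, viewed as maps $T_1\to T_1'$. Take a collar $T_1\times[0,1]\subset X_1$ with $T_1=T_1\times\{0\}$, and a target collar $T_1'\times[0,1]\subset X_1'$ such that $h_1$ carries the first collar onto the second as a product. This is arranged by first isotoping $h_1$ near the boundary so that it is a product in a collar, using uniqueness of collars. Then redefine $h_1$ on the collar by $(x,s)\mapsto(\phi_s(x),s)$, after reparametrizing so that $\phi_1=h_1|_{T_1}$ matches at $s=1$. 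The new map is still a homeomorphism and agrees on $T_1$ with $j'^{-1}h_2 j$, so $h_1$ and $h_2$ glue along $T$. The modification is supported in the collar and is isotopic to the old map rel the other boundary components. In particular it does not change the completed map on any other boundary torus, and it does not change the map of fundamental groups up to conjugacy.

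Perform this for every interface edge of the tree of pieces and blocks, using pairwise disjoint collars, for instance always modifying the piece on the child side of each edge. The collars are disjoint because distinct boundary tori have disjoint collars, and the root's exterior torus $\partial E_K$ is never an interface torus. Hence the exterior boundary action is untouched, which gives the final assertion of the lemma. The main obstacle is the bookkeeping in the first step: verifying that the normalized maps on the two boundary copies of an interface genuinely correspond under $j,j'$ as literal matrices, not merely up to a unit. This is where I would invoke \eqref{eq:block-boundary-compatibility} together with the coefficient-$1$ realization from Proposition~\ref{prop:graph} and Lemma~\ref{lem:hyperbolic}. Once that is settled, the isotopy and collar adjustment is standard.
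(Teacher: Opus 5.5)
Your proposal is correct and follows the paper's argument. Compatibility \eqref{eq:block-boundary-compatibility} together with injectivity of $\Z\to\Zh$ makes the two interface torus maps have the same matrix, hence isotopic; the isotopy is then inserted in pairwise disjoint boundary collars, so the exterior torus is never touched. The only cosmetic difference is the side of the interface that is modified. You reparametrize $h_1$ on a source collar, where the product structure comes for free by taking the target collar to be the image collar. The paper instead post-composes $h_V$ with a twist $(z,u)\mapsto(\delta_{u/\varepsilon}(z),u)$ supported in a target collar, which needs no product structure at all.
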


\begin{proof}
At an interface $j\,:\,T_U\,\longrightarrow\, T_V$, with target gluing $j'$, the required
pointwise equality is
\begin{equation}\label{eq:actual-gluing}
 h_V\big\vert_{T_V}\circ j\ =\ j'\circ h_U|_{T_U}.
\end{equation}
Graph compatibility gives equality of their profinite matrices, hence
of the integral matrices by the injectivity of the map $\Z\,\longrightarrow\,\Zh$. The two torus
maps are therefore isotopic.

Put $k_0\,=\,h_V\big\vert_{T_V}$ and $k_1\,=\,j'\circ h_U\big\vert_{T_U}\circ j^{-1}$.
The target homeomorphism $\delta\,=\,k_1k_0^{-1}$ is isotopic to the identity map.
Choose $\delta_t$ with $\delta_0\,=\,\delta$, $\delta_1\,=\,\id$. A collar is a
product neighbourhood of a boundary with an interval. On a target
collar $T_{V'}\times[0,\,\varepsilon]$, the map
\[
 (z,\,u)\ \longmapsto\ (\delta_{u/\varepsilon}(z),u)
\]
extends $\delta$ at the boundary and becomes the identity map at the inner
face. Extend by the identity map elsewhere and compose with $h_V$.
Its boundary map is now $k_1$, giving \eqref{eq:actual-gluing}.

Collars at distinct boundary components can be chosen to be disjoint, so
performing these adjustments at every interface preserves previous
agreements. No adjustment is needed at the exterior boundary. The maps
and their inverses descend to the glued manifolds. This is the
coefficient-$1$ gluing mechanism of
\cite[Lemma~6.13(2) and Corollary~6.17(2)]{Xu25}.
\end{proof}

\subsection{Reconstruction and ordered recognition}

\begin{thm}\label{thm:relative}
Let $M,\,N$ be exteriors of nontrivial knots, and let
$f\,:\,\widehat{\pi_1(M)}\,\longrightarrow\,\widehat{\pi_1(N)}$ be a continuous isomorphism
preserving the exterior peripheral subgroup up to conjugacy. If its
normalized exterior map is $A_0\,\in\,\GL_2(\Z)$, a homeomorphism
$h\,:\,M\,\longrightarrow\, N$ realizes that same boundary map.
\end{thm}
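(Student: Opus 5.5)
We assemble $h$ piece by piece from the rooted JSJ correspondence and then glue. First apply Proposition~\ref{prop:jsj} to $f$. This gives a rooted tree isomorphism $\theta\colon\Gamma_M\to\Gamma_N$ that preserves vertex types, together with congruent data. Decompose $\Gamma_M$ into its hyperbolic vertices and its maximal Seifert subtrees (blocks). For each such subtree $\Lambda$, part~(ii) supplies a block isomorphism $f_\Lambda$ preserving the full peripheral structure. By part~(iii), its normalized boundary maps are compatible with those of neighbouring blocks under the gluing maps. If the root lies in $\Lambda$, its exterior boundary map is the given $A_0$.

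\textbf{Step 1: integrality.} Because $A_0\in\GL_2(\Z)$, Proposition~\ref{prop:automatic-regularity} makes every normalized boundary map of every vertex integral. In particular this holds on every interface of every maximal Seifert block. Directly, hyperbolic cusp maps are integral by Lemma~\ref{lem:hyperbolic}. Interface maps on the block side are then integral by \eqref{eq:interface-integrality}, and the exterior torus carries $A_0$.

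\textbf{Step 2: realization on each piece.} For each hyperbolic vertex $v$, Lemma~\ref{lem:hyperbolic} gives one homeomorphism $h_v$ onto $N_{\theta(v)}$ realizing all its normalized cusp maps. This includes the exterior torus when $v$ is the root. For each maximal Seifert block $X$, all boundary components are selected, and each carries an integral map. The block is a graph-manifold with nonempty incompressible toral boundary, so Proposition~\ref{prop:graph} gives one homeomorphism $h_X$ realizing all of them with coefficient~$1$. Seifert--Seifert tori are internal to $X$, so no independent choices arise on them. The only delicate point is the sign choice. Theorem~\ref{thm:xu-boundary} provides a single $\eta$ for the whole selected set, so the choice between $h_\pm$ made on one torus works on every torus simultaneously, including the exterior torus carrying $A_0$. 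Two cases are degenerate. If $\Gamma_M$ is a single hyperbolic vertex, the lemma already finishes the proof. If $M$ is a graph-manifold, the whole exterior is one block and Proposition~\ref{prop:graph} finishes it directly.

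\textbf{Step 3: gluing.} Every remaining interface joins a hyperbolic piece to either another hyperbolic piece or a Seifert block. At each one, \eqref{eq:block-boundary-compatibility} says the two realized integral matrices agree after composing with $J,J'$. Lemma~\ref{lem:gluing} then adjusts the maps in disjoint collars away from $\partial M$ so that they glue into a homeomorphism $h\colon M\to N$. The exterior boundary action is untouched by these adjustments, so it remains $A_0$.

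\textbf{Main obstacle.} The hardest step is making sure the matrices realized by different pieces are exactly the same normalized maps used at the interfaces. Basepoint paths and carrying conjugators differ from piece to piece, so this requires care. The argument relies on self-normalization \eqref{eq:normalizer}: normalized maps are independent of the carrying element. It also relies on the fact that each realization result (Lemma~\ref{lem:hyperbolic}, Proposition~\ref{prop:graph}) is stated for normalized maps. With these, the profinite equality on each torus becomes an equality of integral matrices, hence an isotopy of the torus maps. I would check this bookkeeping interface by interface, using the fixed peripheral representatives from Proposition~\ref{prop:jsj}.
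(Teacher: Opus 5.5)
Your proposal is correct and follows essentially the same route as the paper. Both arguments use the rooted JSJ correspondence of Proposition~\ref{prop:jsj}, get integrality from Proposition~\ref{prop:automatic-regularity} and \eqref{eq:interface-integrality}, realize hyperbolic pieces by Lemma~\ref{lem:hyperbolic} and each maximal Seifert block (all boundaries selected, single sign) by Proposition~\ref{prop:graph}, glue with Lemma~\ref{lem:gluing}, and treat the all-Seifert case directly as one block; the bookkeeping you flag is handled exactly as you suggest, by self-normalization \eqref{eq:normalizer} together with Proposition~\ref{prop:jsj}(iii).
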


\begin{proof}
Proposition~\ref{prop:jsj} matches the rooted trees, piece types, and
full peripheral structures. Proposition~\ref{prop:automatic-regularity}
makes the required maps integral. If every piece is Seifert, take the
entire exterior as one block and select its boundary. The induced map
is a full peripheral isomorphism, so Proposition~\ref{prop:graph}
realizes $A_0$ directly.

Otherwise, apply Lemma~\ref{lem:hyperbolic} to each hyperbolic piece.
For every maximal Seifert block, restrict the congruent graph data as
in Proposition~\ref{prop:jsj}(ii). This supplies a block completion
isomorphism preserving every boundary. Each block boundary is a
hyperbolic interface or the exterior torus: another Seifert neighbour
would contradict maximality. Its map is integral by
\eqref{eq:interface-integrality} or is $A_0$ by
Proposition~\ref{prop:jsj}(iii). The selected set is nonempty, because a
proper connected subtree has an edge to its complement.

Select all block boundaries. Proposition~\ref{prop:graph} realizes them
by one homeomorphism with coefficient $1$. Thus a single sign choice
controls every interface of that block. The hyperbolic and block maps
are compatible on every remaining cut torus; Lemma~\ref{lem:gluing}
joins them. The exterior boundary lies on a hyperbolic piece or among
a block's selected boundaries, so its action remains $A_0$.
\end{proof}

Theorem \ref{thm:relative} realizes the prescribed boundary action, not necessarily
the entire profinite map. The isomorphism hypothesis is indispensable:
the retraction $G_{T\#T}\,\longrightarrow\, G_T$ that keeps one trefoil factor and
abelianizes the other preserves both peripheral elements and has
boundary matrix $I$, but the exteriors are not homeomorphic.

\begin{cor}\label{cor:ordered-recognition}
For oriented tame knots, $K\,\simeq\, L$ if and only if $\Pk K\,\cong\,\Pk L$.
\end{cor}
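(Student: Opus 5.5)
The plan is to prove the two directions separately. The forward direction is routine. The converse reduces, via Theorem~\ref{thm:relative} and meridional filling, to the discrete statement about oriented equivalence.

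For the forward direction, suppose $K\simeq L$. An orientation preserving homeomorphism of $S^3$ carrying $K$ to $L$ with directions preserved restricts to a homeomorphism $E_K\to E_L$. This homeomorphism carries the positive meridian to the positive meridian, since linking number is preserved under orientation preserving maps of both the ambient space and the knot. It carries the preferred longitude to the preferred longitude, since linking number zero and the knot direction are both preserved. After choosing boundary paths compatibly, the induced discrete isomorphism $G_K\to G_L$ satisfies \eqref{eq:exact-marking} with a single conjugator. Its completion is therefore an isomorphism $\Pk K\cong\Pk L$.

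For the converse, assume $\Phi$ realizes $\Pk K\cong\Pk L$, and make one global conjugation so that $\Phi(\mu_K)=\mu_L$ and $\Phi(\lambda_K)=\lambda_L$ exactly. I first dispose of the unknot. By \eqref{eq:completed-periphery}, $\lambda_K=1$ in $\widehat{G_K}$ exactly when $K=U$, because otherwise $\overline{\langle\lambda_K\rangle}\cong\Zh$. Since $\Phi$ carries $\lambda_K$ to $\lambda_L$, the knot $K$ is trivial if and only if $L$ is, and two unknots are oriented equivalent. So assume both knots are nontrivial. The exact marks then identify the closed peripheral subgroups $\overline{P_K}\to\overline{P_L}$, so $\Phi$ preserves the exterior peripheral subgroup up to conjugacy. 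The normalized exterior map sends $\mu_K\mapsto\mu_L$ and $\lambda_K\mapsto\lambda_L$, so its matrix in the ordered meridian--longitude bases is $A_0=I\in\GL_2(\Z)$. This is the last assertion of Proposition~\ref{prop:automatic-regularity}. Normalization here is well defined by \eqref{eq:normalizer}, so the carrying element does not alter $A_0$.

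Theorem~\ref{thm:relative} then yields a homeomorphism $h\colon E_K\to E_L$ whose boundary matrix is $I$. By Subsection~\ref{subsec:classical}, $h$ preserves the meridian slope, so it extends over the meridional filling solid tori to a homeomorphism $(S^3,K)\to(S^3,L)$. The longitude coefficient $+1$ forces the core $K$ to be carried to $L$ with its direction. It remains to see that the extension preserves the orientation of $S^3$. The meridian--longitude bases of $T_K$ and $T_L$ have the same sign relative to the induced boundary orientations, and $\det I=1$, so $h|_{T_K}$ preserves boundary orientation. Hence $h$ preserves the orientation of the exteriors, and the extension preserves the orientation of $S^3$. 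An orientation preserving homeomorphism of $S^3$ carrying $K$ to $L$ with directions preserved is precisely $K\simeq L$.

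The main obstacle is checking that this orientation bookkeeping is consistent with the conventions fixed in Section~\ref{sec:topology}: that $\det=+1$ in ordered meridian--longitude bases on both sides really corresponds to orientation preservation of the ambient sphere, and that the correspondence is not reversed by the choice of outward normal. All the deep input is already packaged in Theorem~\ref{thm:relative}. If needed, I would verify the sign on the standard unknotted solid torus model, and argue that the convention is preserved under the filling.
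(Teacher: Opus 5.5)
Your proposal is correct and follows the paper's proof essentially step by step. Both arguments make one global conjugation, settle the unknot case via \eqref{eq:completed-periphery}, feed $A_0=I$ into Theorem~\ref{thm:relative}, and then use meridional filling: the determinant $+1$ convention gives the ambient orientation and the longitude coefficient $+1$ gives the knot direction. The paper resolves the orientation check you flag at the same level of detail you propose, by citing the convention in Subsection~\ref{subsec:classical} that the source and target meridian--longitude bases have the same sign relative to the boundary orientation.
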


\begin{proof}
An oriented equivalence induces the marked isomorphism. Conversely,
normalize the two marks by one conjugation. If one knot is the unknot,
its longitude is trivial; then \eqref{eq:completed-periphery} implies that the
other is also the unknot. For nontrivial knots the marked exterior map
has matrix $I$, so Theorem~\ref{thm:relative} produces an exterior
homeomorphism with that boundary action.

The determinant $+1$ condition, in consistently signed boundary bases, forces
this homeomorphism to preserve the ambient orientation. Meridian
preservation permits extension over the filling solid torus. The
positive longitude coefficient makes its action on the core generator
of $H_1(\nu K;\,\Z)$ equal to $+1$, so the knot direction is preserved.
For graph knots the argument uses only graph-block realization and
filling.
\end{proof}

\section{Recovering the ordered marking from one element}
\label{sec:one-element}

The mark is the exact element in the entire completion. An arbitrary
rank-two torus automorphism need not recover its factors: the matrix
$\left(\begin{smallmatrix}1&0\\2&-1\end{smallmatrix}\right)$ fixes $(1,1)$
and changes the ordered pair. We use the root structure and filling
topology of the knot group. We prove the local steps for
$w_{K,n}\,=\,\mu_K\lambda_K^n$, with a fixed integer $n\,\ne\,0$.

\subsection{The original boundary-intersection inputs}

For an element $z$ of a group $P$, the centralizer is
$C_P(z)\,=\,\{a\,\in\, P\,\,\big\vert\,\, az\,=\,za\}$. This is a closed subgroup when $P$
is profinite. An isomorphism carries this subgroup onto the
centralizer of the image element.
We require the following two established local facts.

\begin{prop}\label{prop:local-boundary-intersections}
Let $V$ be the completion of the group of a JSJ piece of a
nontrivial knot exterior. Let $P_i$ denote its completed
boundary-torus subgroups with chosen representatives. Then
the following two hold:
\begin{enumerate}
\item If the piece is hyperbolic, the family $\{P_i\}$ is malnormal:
      $P_i\cap aP_ja^{-1}\,=\,1$ unless $i\,=\,j$ and $a\,\in\, P_i$.
\item If the piece is Seifert, let $Z_V$ be the closure of its
      regular-fibre subgroup. Unless $i\,=\,j$ and $a\,\in \,P_i$,
      \[
       P_i\cap aP_ja^{-1}\,\ =\,\ Z_V.
      \]
      The subgroup $Z_V\,\cong\,\Zh$ is contained in each boundary
      group and is characteristic in $V$.
\end{enumerate}
\end{prop}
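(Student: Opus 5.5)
Both parts are cited inputs from Xu's work and Wilkes's Seifert theory rather than new arguments. The plan is to reduce each to those results and check only the translation into the present notation.

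For (i), the interior of a hyperbolic piece has fundamental group a torsion-free nonuniform lattice. Its cusp subgroups form a malnormal family in the discrete group: distinct cusps are not conjugate, and each maximal parabolic subgroup is self-normalizing and meets its other conjugates trivially. The work lies in passing to the completion. Here I would invoke \cite[Lemma~6.2]{Xu25}, the malnormality statement already used after \eqref{eq:normalizer}. The mechanism behind it is that relatively hyperbolic groups with virtually special structure have separable double cosets $P_iP_j$, so a nontrivial intersection $P_i\cap aP_ja^{-1}$ in $V$ would be detected in a finite quotient. This would contradict discrete malnormality after approximating $a$ by a discrete element. I would state the conclusion exactly as in (i), with $a\in P_i$ in the case $i=j$ coming from \eqref{eq:normalizer}.

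For (ii), write $V$ as a central extension $1\to Z_V\to V\to\widehat{\pi_1^{\mathrm{orb}}(B)}\to 1$. This uses the fact that the base is orientable and that completion preserves this short exact sequence, by efficiency and fullness of the fibre subgroup. Each boundary closure $P_i\cong\Zh^2$ contains $Z_V$. Its image in the base completion is the procyclic closure $\overline{\langle e_i\rangle}$ of a boundary circle of the orbifold base. The intersection $P_i\cap aP_ja^{-1}$ therefore contains $Z_V$ and maps onto $\overline{\langle e_i\rangle}\cap\bar a\,\overline{\langle e_j\rangle}\,\bar a^{-1}$. The base orbifold has negative Euler characteristic (major pieces), so its group is virtually free, and its boundary cyclic subgroups form a malnormal family in the profinite completion of a virtually free group. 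This is a standard result of Ribes--Zalesskii for free profinite products, cited through \cite[Proposition~8.2]{Xu25} and \cite{WilkesSeifert}. That intersection is therefore trivial unless $i=j$ and $\bar a\in\overline{\langle e_i\rangle}$. In that case $a\in P_iZ_V=P_i$, which gives the displayed equality. Finally, $Z_V\cong\Zh$ by Theorem~\ref{thm:peripheral-completion} applied to a primitive fibre class. It is characteristic because it is the unique maximal virtually central normal procyclic subgroup, as recalled before Lemma~\ref{lem:fibre-slope}.

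The main obstacle is profinite malnormality of the base boundary subgroups, including the torsion cone points present for torus-knot exteriors and cable spaces. I would first pass to a finite-index torsion-free surface subgroup, where the base is free with the boundary circles as a free factor system. I would then apply the profinite Kurosh-type argument there and descend using the finite index.
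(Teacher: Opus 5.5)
\textbf{Part (i).} Your treatment is the paper's: both rest entirely on \cite[Lemma~6.2]{Xu25} (equivalently \cite[Lemma~4.5]{WZGeometries}). Two small cautions. Your mechanism (double-coset separability plus approximating $a$ by a discrete element) is only a heuristic: double-coset separability controls $P_i\cap gP_jg^{-1}$ for discrete $g$, and passing to an arbitrary profinite $a$ is exactly the nontrivial content of the cited lemma. Also, \eqref{eq:normalizer} only controls those $a$ with $aP_ia^{-1}=P_i$. The exceptional case $i=j$, $a\in P_i$ therefore has to come from the malnormality lemma itself, not from self-normalization.

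\textbf{Part (ii).} The paper simply cites the Seifert boundary-intersection lemma (\cite[Section~5]{WZGraph}, \cite[Lemma~6.4]{WilkesGraphI}, restated as \cite[Lemma~6.4]{Xu25}) together with fibre recognition. You instead unpack it through the completed central extension. That reduction is sound. Both $P_i$ and $aP_ja^{-1}$ contain the kernel $Z_V$, so each is the full preimage of its image, and their intersection is the full preimage of $\overline{\langle e_i\rangle}\cap\bar a\,\overline{\langle e_j\rangle}\,\bar a^{-1}$. Thus (ii) is equivalent to profinite malnormality of the base boundary family. Your treatment of $Z_V\cong\Zh$ and of characteristicity is also fine.

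The gap is in your plan for that input, which you rightly call the main obstacle. In a torsion-free finite-index subgroup the base becomes a compact surface $B'$, and its boundary circles never form a free factor system. Suitably oriented, their classes sum to zero in $H_1(B')$, whereas generators of cyclic free factors extend to a basis of the abelianization. For a torus-knot exterior, the cover given by the kernel of $\Z/p*\Z/q\to\Z/p\times\Z/q$ is the fibre surface. Its unique boundary circle is a product of commutators and is not a free factor at all.

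A working version proceeds as follows.
\begin{enumerate}
\item Pass to a deeper normal cover $H$ whose surface has at least three boundary circles.
\item Eliminate a third boundary generator from the surface relator. This shows that any one or two boundary classes extend to a free basis, so the Ribes--Zalesskii theorem on conjugates of free factors in free profinite products applies pairwise.
\item Descend to $\widehat{\pi_1^{\mathrm{orb}}(B)}$. A nontrivial closed subgroup of $\overline{\langle e_i\rangle}\cong\Zh$ meets the open subgroup $\widehat H$ nontrivially. Write $a=hs$ with $h\in\widehat H$ and $s$ a discrete coset representative, apply malnormality in $\widehat H$ to the lifted boundary subgroups, and conclude with discrete malnormality of the boundary family in $\pi_1^{\mathrm{orb}}(B)$.
\end{enumerate}
The simpler course is to cite the Seifert intersection lemma directly, as the paper does. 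Note also that \cite[Proposition~8.2]{Xu25} enters the paper only for fibre recognition, not for this malnormality.
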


\begin{proof}
The first assertion is
\cite[Lemma~4.5]{WZGeometries}, also stated as
\cite[Lemma~6.2]{Xu25}.

For the second assertion, the original
Seifert boundary-intersection argument is in
\cite[Section~5]{WZGraph}; the form allowing arbitrary completed
conjugators is \cite[Lemma~6.4]{WilkesGraphI}, restated in
\cite[Lemma~6.4]{Xu25}. Characteristic fibre recognition is
\cite[Theorem~5.5(1)]{WilkesSeifert}, or
\cite[Proposition~8.2(1)]{Xu25}.
All pieces here are major Seifert pieces, so the solid torus,
thickened torus, and twisted Klein-bottle bundle exceptions are
absent. The nonconjugacy and normalizer conventions are explained
in Section~\ref{sec:jsj}.
\end{proof}

\subsection{The marked element identifies the exterior torus}

\begin{lem}\label{lem:word-centralizer}
Let $K$ be a nontrivial oriented tame knot and $n\,\ne\,0$ an integer.
Then
\[
 C_{\widehat{G_K}}(\mu_K\lambda_K^n)\ =\ \overline{P_K}\ \cong\ \Zh^2.
\]
If the JSJ decomposition is nontrivial, this element fixes exactly
the root vertex in the standard profinite JSJ tree.
\end{lem}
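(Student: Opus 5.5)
The containment $\overline{P_K}\subseteq C(w)$ is immediate, since $\overline{P_K}\cong\Zh^2$ is abelian and contains $w=\mu_K\lambda_K^n$. For the reverse inclusion I would split into two cases: the JSJ decomposition is trivial, or it is not.

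\emph{Nontrivial JSJ decomposition.} Here I use the action of $\widehat{G_K}$ on the standard profinite JSJ tree $T$.

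First I show that $w$ fixes exactly the root vertex. It fixes the root vertex $V$, since it lies in $\overline{P_K}\le V$. Suppose $w$ fixes another vertex. The fixed-point set of a procyclic group acting on a profinite tree is a subtree, so $w$ then fixes an edge at the root. Its stabilizer is a conjugate $aP_ea^{-1}$ with $a\in V$, where $P_e$ is the completed group of an internal boundary torus of the root piece. Hence $w\in \overline{P_K}\cap aP_ea^{-1}$.
\begin{itemize}
\item If the root piece is hyperbolic, Proposition~\ref{prop:local-boundary-intersections}(i) makes this intersection trivial. But $w\neq 1$, because $\overline{\langle w\rangle}\cong\Zh$ inside $\Zh^2$.
\item If the root piece is Seifert, Proposition~\ref{prop:local-boundary-intersections}(ii) puts $w$ in the fibre closure $Z_V$. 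The computation after Lemma~\ref{lem:fibre-slope} shows $\mu_K\lambda_K^n\notin Z_V$, using $|c_K|\ge 2$ at a prime root and the fibre $\mu_K$ at a composing root.
\end{itemize}
So $\mathrm{Fix}(w)$ is the single root vertex.

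Next, $C(w)$ preserves $\mathrm{Fix}(w)$, so it fixes the root vertex and therefore $C(w)\le V$. It remains to compute $C_V(w)$ inside the root piece group. For a hyperbolic root I would use the profinite analogue of "centralizers of parabolic elements are cusp groups." Concretely, if $a\in C_V(w)$ then $w\in \overline{P_K}\cap a\overline{P_K}a^{-1}$, and malnormality (Proposition~\ref{prop:local-boundary-intersections}(i)) gives $a\in\overline{P_K}$. For a Seifert root the same trick works: if $a\notin\overline{P_K}$, then $w\in \overline{P_K}\cap a\overline{P_K}a^{-1}=Z_V$, which was excluded above.

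\emph{Trivial JSJ decomposition.} The exterior is a single piece: a torus-knot exterior or a hyperbolic exterior. The same root computation applies verbatim, since the argument above was purely intra-vertex.

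The main obstacle is justifying that $\mathrm{Fix}(w)$ is a connected subtree that meets an edge adjacent to the root whenever it is larger than one vertex. For profinite trees this is the standard fact (Ribes–Zalesskii) that the fixed points of a closed subgroup form a subtree. In this case $\overline{\langle w\rangle}$ is procyclic, so it falls under the standard fixed-subtree results for profinite trees. I would cite that fact rather than prove it.
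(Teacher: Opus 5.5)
Your proposal is correct and follows the paper's proof essentially step for step. The steps match: $w\neq 1$ and $w$ lies outside the Seifert fibre closure; the fixed-subtree argument yields an edge at the root whose stabilizer meets $\overline{P_K}$ only in $1$ or in $Z_V$; hence $C(w)\le V$, and the final intra-vertex step uses Proposition~\ref{prop:local-boundary-intersections}. The point you flag as the main obstacle is handled in the paper exactly as you propose: it cites \cite[Theorem~4.1.5]{RibesGraphs} for the fixed subtree, and \cite[Propositions~2.4.9 and 2.1.6(c)]{RibesGraphs} to get an edge adjacent to the root in the minimal subtree through the two fixed vertices.
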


\begin{proof}
Write $P\,=\,\overline{P_K}$, $w\,=\,\mu_K\lambda_K^n$, and let $V$
be the completed root group containing $P$. The element $w$
is nonidentity because its boundary coordinate $(1,\,n)$ is
nonzero. If the root is Seifert, $w$ does not belong to its
fibre closure: this was checked after
Lemma~\ref{lem:fibre-slope}. Explicitly, a composing fibre has
coordinates $(1,\,0)$, while a prime Seifert fibre has
coordinates $(c_K,\,1)$ with $|c_K|\,\ge\,2$; neither can have a
profinite multiple equal to $(1,\,n)$.

Suppose first that the JSJ tree has edges, and let $v$ be the
vertex with stabilizer $V$. Since $w\,\in\, P\,\le\, V$, it fixes $v$.
We show that $w$ cannot fix another vertex $v'\,\ne\, v$.
Suppose otherwise. Since vertex stabilizers are closed,
the closed procyclic group $H\,=\,\overline{\langle w\rangle}$
fixes both $v$ and $v'$.
By \cite[Theorem~4.1.5]{RibesGraphs}, the nonempty
fixed-point set $T^H$ is a profinite subtree.
It therefore contains the minimal profinite subtree
$\Delta$ containing $v$ and $v'$, whose existence follows
from \cite[Proposition~2.4.9]{RibesGraphs}.
Thus $H$ fixes $\Delta$ pointwise.

The edge set of the standard profinite tree is closed,
because it is a finite disjoint union of compact coset
spaces. Hence $E(\Delta)\,=\,E(T)\cap\Delta$ is closed in
$\Delta$. By \cite[Proposition~2.1.6(c)]{RibesGraphs},
the subtree $\Delta$, which contains two distinct
vertices, has an edge adjacent to $v$.
This is the profinite-tree argument used in
\cite[proof of Lemma~6.6]{Xu25}; it does not assume
an ordinary finite edge path between arbitrary
profinite vertices.

Let $e$ be that edge. Its stabilizer is a conjugate, by an element
of $V$, of the completed group of an internal boundary torus of
the root piece. This torus is distinct from the exterior boundary
whose group is $P$. Since $w$ fixes $e$, it belongs to the
intersection of these two boundary groups in $V$.
Proposition~\ref{prop:local-boundary-intersections} makes the
intersection trivial at a hyperbolic root, or the fibre closure
at a Seifert root. Both conclusions contradict the preceding
observations about $w$. Thus $v$ is its unique fixed vertex.

Now let $a\,\in\, C_{\widehat{G_K}}(w)$. The equality $aw\,=\,wa$ implies
that $a$ takes vertices fixed by $w$ to vertices fixed by $w$.
By uniqueness $a$ fixes $v$, so $a\,\in\, V$.
In the case of a single JSJ piece we already have
$V\,=\,\widehat{G_K}$, so the same conclusion holds without the
tree argument.

Since $w\,\in\, P$ and $awa^{-1}\,=\,w$, it lies in
$P\cap aPa^{-1}$. The local intersection proposition forces
$a\,\in\, P$: otherwise the intersection is trivial in the
hyperbolic case, or only the fibre closure in the Seifert case.
Conversely $P$ is abelian and contains $w$, so every element
of $P$ commutes with $w$. This proves the centralizer equality.
\end{proof}

\begin{cor}\label{cor:word-peripheral}
For nontrivial knots, if a continuous isomorphism sends
$w_{K,n}$ to a conjugate $g w_{L,n}g^{-1}$, it sends
$\overline{P_K}$ to $g\overline{P_L}g^{-1}$.
After this one conjugation, the rooted JSJ correspondence and
the literal root restriction of
Lemma~\ref{lem:normalization} apply.
\end{cor}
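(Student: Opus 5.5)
The plan is to use the centralizer computation of Lemma~\ref{lem:word-centralizer} and transport it through the isomorphism. Write $\Phi$ for the given continuous isomorphism, with $\Phi(w_{K,n})=g\,w_{L,n}g^{-1}$. Centralizers are intrinsic to the group, so $\Phi$ carries $C_{\widehat{G_K}}(w_{K,n})$ onto $C_{\widehat{G_L}}(g w_{L,n} g^{-1})$, and this equals $g\,C_{\widehat{G_L}}(w_{L,n})\,g^{-1}$.

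First I would check that both knots are nontrivial before applying the lemma. If $K$ is the unknot, then $\widehat{G_K}\cong\Zh$ is abelian and the centralizer is the whole group. For nontrivial $L$, however, the centralizer is the proper closed subgroup $\overline{P_L}\cong\Zh^2$, which is a proper subgroup because $\widehat{G_L}$ is nonabelian: its finite quotients include nonabelian groups. The same argument applied to $\Phi^{-1}$ shows that $K$ and $L$ are either both trivial or both nontrivial, so no mixed case arises.

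In the nontrivial case, Lemma~\ref{lem:word-centralizer} identifies both centralizers with $\overline{P_K}$ and $\overline{P_L}$ respectively. This gives $\Phi(\overline{P_K})=g\overline{P_L}g^{-1}$, as claimed.

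For the second sentence, replace $\Phi$ by $\Inn(g^{-1})\circ\Phi$. This map carries $\overline{P_K}$ exactly onto $\overline{P_L}$ and sends $w_{K,n}$ to $w_{L,n}$. Since it preserves the exterior peripheral closure, Proposition~\ref{prop:jsj} applies and yields the rooted JSJ correspondence. Lemma~\ref{lem:normalization} then applies with $P=\overline{P_K}$ and $Q=\overline{P_L}$, with no further conjugation needed, so the restriction to the root vertex group is literal. This step also uses that the exact equality $f(P)=Q$ is the normalization that lemma requires.

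I do not expect a genuine obstacle; the substance lies in Lemma~\ref{lem:word-centralizer}. The only delicate point is the trivial-knot case, where that lemma does not apply and the nonabelian comparison above handles it.
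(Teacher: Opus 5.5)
Your proposal is correct and matches the paper's proof: you transport the centralizer using $C(gwg^{-1})=gC(w)g^{-1}$, apply Lemma~\ref{lem:word-centralizer} in both groups, and then compose with $\Inn(g^{-1})$ so that Proposition~\ref{prop:jsj} and Lemma~\ref{lem:normalization} apply. Your unknot discussion is unnecessary, since the statement already assumes both knots are nontrivial, but it is harmless.
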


\begin{proof}
An isomorphism preserves centralizers, and
$C(gwg^{-1})\,=\,gC(w)g^{-1}$. Apply
Lemma~\ref{lem:word-centralizer} in the two groups.
\end{proof}

\subsection{From the one element to an integral boundary matrix}

\begin{lem}\label{lem:word-integrality}
Let $K,\,L$ be nontrivial oriented tame knots and $n\,\ne\,0$.
Suppose a continuous isomorphism $\Phi :\widehat{G_K}\longrightarrow\widehat{G_L}$ satisfies
$\Phi(w_{K,n})\,=\,w_{L,n}$. Then $\Phi(P_K)=P_L$, and the restriction $\Phi|_{P_K}:P_K\longrightarrow P_L$
has a matrix in $\mathrm{GL}_2(\mathbb Z)$ with respect
to the ordered meridian--longitude bases
$(\mu_K,\lambda_K)$ and $(\mu_L,\lambda_L)$.
\end{lem}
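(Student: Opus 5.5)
The plan is to pin down the boundary matrix using three constraints: the exact fixed vector $(1,n)$, the normalized abelianization, and the geometry of the root piece.

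\textbf{Reduction to the root piece.} By Corollary~\ref{cor:word-peripheral} with $g=1$, $\Phi$ carries $\overline{P_K}$ onto $\overline{P_L}$ exactly, not merely up to conjugacy. So the restriction is a continuous isomorphism $\overline{P_K}\cong\Zh^2\to\overline{P_L}\cong\Zh^2$, with matrix $A=\left(\begin{smallmatrix}a&b\\c&d\end{smallmatrix}\right)\in\GL_2(\Zh)$ in the ordered meridian--longitude bases. Lemma~\ref{lem:normalization} then applies with this normalization, so the root-vertex restriction $\psi\colon V\to W$ agrees with $\Phi$ on $\overline{P_K}$. The lemma asserts integrality of $A$; the statement "$\Phi(P_K)=P_L$" I read as the discrete consequence, which follows once $A\in\GL_2(\Z)$.

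\textbf{Step 1: use the abelianization.} The induced map on profinite abelianizations $\Zh\to\Zh$ is multiplication by a unit $\alpha$, so $\widehat\chi_L\Phi=\alpha\widehat\chi_K$. Evaluating at $w_{K,n}$ gives $\widehat\chi_K(w_{K,n})=1=\widehat\chi_L(w_{L,n})$, hence $\alpha=1$. Evaluating at $\mu_K$ and at $\lambda_K$ then gives $a=1$ and $b=0$. So
\[
 A=\begin{pmatrix}1&0\\c&d\end{pmatrix},\qquad d\in\Zh^\times .
\]
The condition $A(1,n)^T=(1,n)^T$ becomes
\[
 c+dn=n .
\]
It therefore suffices to show that $d=\pm1$: then $c=n(1-d)\in\Z$.

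\textbf{Step 2: case split on the root type}, which Proposition~\ref{prop:jsj} shows is the same for $K$ and $L$.

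\emph{Hyperbolic root.} Theorem~\ref{thm:hyp-integral-regular}, or Lemma~\ref{lem:hyperbolic}, applied to $\psi$ makes the normalized exterior cusp map integral, and that map is $A$.

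\emph{Composing root.} The fibre closure is characteristic and equals $\overline{\langle\mu\rangle}$. Thus $A(1,0)^T=(1,c)^T$ must be a profinite multiple of $(1,0)^T$, which forces $c=0$. Then $dn=n$, and injectivity of multiplication by the nonzero integer $n$ gives $d=1$.

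\emph{Prime Seifert root.} Lemma~\ref{lem:fibre-slope} shows that $A$ maps the fibre vector $(c_K,1)$ to $\kappa(c_L,1)$ for some unit $\kappa$. The first coordinate gives $c_K=\kappa c_L$, and Lemma~\ref{lem:scalar} yields $\kappa=\pm1$. The second coordinate gives $cc_K+d=\kappa$. Substituting $c=n(1-d)$ produces
\[
 d\,(1-nc_K)=\kappa-nc_K .
\]
Here $1-nc_K$ is a nonzero integer because $|c_K|\ge2$, and the right-hand side is a nonzero integer for the same reason. Since $d$ is a unit, Lemma~\ref{lem:scalar} gives $d=\pm1$.

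In all three cases $A\in M_2(\Z)$, and $A$ is invertible over $\Zh$, so \eqref{eq:integral-intersection} gives $A\in\GL_2(\Z)$.

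\textbf{Main obstacle.} The delicate point is the Seifert-root case. One must check that the characteristic fibre closure is transported by the literal restriction $\psi$, and not only by the vertex map up to conjugation. This holds because $\psi$ maps $V$ onto $W$, and the fibre closure lies inside $\overline{P_K}$, where $\psi=\Phi$. One must also check that the fibre coordinates on the two sides use the same convention, namely $(c_K,1)$ with the longitude coefficient $+1$.
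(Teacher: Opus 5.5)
Your proposal is correct and follows essentially the paper's route: the centralizer corollary and literal root restriction, abelianization forcing the first row $(1,0)$ and $c+dn=n$, and the same three-way split by root type, with the prime Seifert case settled by the fibre vector $(c_K,1)$ and integrality of scalars. The differences are minor. In the hyperbolic case you cite Theorem~\ref{thm:hyp-integral-regular} directly; the paper lists this as an alternative, but its main argument uses only the unit-regularity of Theorem~\ref{thm:hyp-regular} plus the fixed vector $(1,n)$, and citing Lemma~\ref{lem:hyperbolic} instead would import Theorem~\ref{thm:xu-genuineness}, which this step deliberately avoids. In the Seifert case you reduce to the scalar equation $d(1-nc_K)=\kappa-nc_K$ and apply Lemma~\ref{lem:scalar}, where the paper applies Lemma~\ref{lem:integral-vectors} to the two independent vectors.
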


\begin{proof}
Corollary~\ref{cor:word-peripheral} gives
$\Phi(\overline{P_K})\,=\,\overline{P_L}$.
The class of $w_{K,n}$ in the normalized completed
abelianization is $1$. Hence the induced automorphism of
$\Zh$ is multiplication by $1$:
\[
 \widehat\chi_L\Phi\ =\ \widehat\chi_K.
\]
If $A\,\in\,\GL_2(\Zh)$ is the boundary matrix, this equality
gives its first row. There exist $\rho\,\in\,\Zh$ and
$\beta\,\in\,\Zh^\times$ with
\begin{equation}\label{eq:word-boundary-shape}
 A=\begin{pmatrix}1&0\\ \rho&\beta\end{pmatrix},
 \qquad
 \Phi(\mu_K)=\mu_L\lambda_L^\rho,\quad
 \Phi(\lambda_K)=\lambda_L^\beta,\quad
 \rho+n\beta=n.
\end{equation}
We now use the root type, which is preserved by the rooted JSJ
correspondence.

\emph{Hyperbolic root.}
The literal root restriction of
Lemma~\ref{lem:normalization} is a continuous isomorphism of
hyperbolic piece completions with boundary map $A$.
The weaker peripheral theorem
Theorem~\ref{thm:hyp-regular} gives
$A\,=\,\eta B$ for $\eta\,\in\,\Zh^\times$ and
$B\,\in\,\GL_2(\Z)$.
Let $v\,=\,(1,\,n)^T$. Since $Av\,=\,v$, the first coordinate says
$1\,=\,\eta k$, where $k$ is the integer first coordinate of $Bv$.
Thus $k$ is an integer unit in $\Zh$, so $k\,=\,\pm 1$ and
$\eta\,=\,\pm 1$. It follows that $A\,\in\,\GL_2(\Z)$.
The stronger integral theorem
Theorem~\ref{thm:hyp-integral-regular} also gives this directly;
the displayed argument records that the one-mark step needs
only the older unit-regularity input.

\emph{Composing root.}
The target root is also composing. Indeed, such a root has at
least three boundary components, whereas a prime Seifert root
is a torus-knot exterior with one or a cable space with two.
The full peripheral correspondence preserves this number.
The characteristic fibre closures are generated by the two
meridians, so
$\Phi(\mu_K)\,=\,\mu_L^u$ for $u\,\in\,\Zh^\times$.
normalized abelianization gives $u\,=\,1$.
Then \eqref{eq:word-boundary-shape} has $\rho\,=\,0$ and
$n\beta\,=\,n$. Multiplication by the nonzero integer $n$
is injective on $\Zh$, so $\beta\,=\,1$ and $A\,=\,I$.

\emph{Prime Seifert root.}
Both root fibre elements have the coordinates of
Lemma~\ref{lem:fibre-slope}:
\[
 h_K\,=\,\mu_K^{c_K}\lambda_K,\quad
 h_L\,=\,\mu_L^{c_L}\lambda_L,\quad |c_K|,\, |c_L|\,\ge\,2.
\]
The root restriction transports the characteristic fibre closure,
so $\Phi(h_K)\,=\,h_L^u$ with $u\,\in\,\Zh^\times$.
Abelianization yields $c_K\,=\,c_Lu$.
The integral scalar lemma gives $u\,=\,\varepsilon\in\{1,\,-1\}$.
Thus, $A$ sends the two integer vectors
$(1,\,n)^T$ and $(c_K,\,1)^T$ to
$(1,\,n)^T$ and $\varepsilon(c_L,\,1)^T$ respectively.
They are independent over $\mathbb Q$, because
$1-nc_K\,\ne\,0$.
In matrix form,
\[
 A\begin{pmatrix}1&c_K\\n&1\end{pmatrix}
 \ =\
 \begin{pmatrix}1&c_L\\n&1\end{pmatrix}
 \diag(1,\,\varepsilon).
\]
Lemma~\ref{lem:integral-vectors} now makes all entries of
$A$ integers. Since $A$ is already invertible over $\Zh$,
it follows from \eqref{eq:integral-intersection} that $A\,\in\,\GL_2(\Z)$.
This finishes all cases.
\end{proof}

\subsection{Why an integral map preserves the two slopes}
The next statement is the meridian--longitude consequence of
\cite[Lemma~10.4]{XuRegular}. We include the proof and identify
the classical inputs, since it is the point at which knot
topology rules out the torus automorphism.

\begin{lem}\label{lem:integral-axes}
Let $K,\,L$ be nontrivial knots. Suppose a continuous isomorphism
$f\,:\,\widehat{G_K}\,\longrightarrow\,\widehat{G_L}$ carries the exterior
peripheral closures to one another and has an integral normalized
boundary matrix. After carrying them to the chosen representatives,
\[
 f(\mu_K)\ =\ \mu_L^{\epsilon_\mu},\quad
 f(\lambda_K)\ =\ \lambda_L^{\epsilon_\lambda},
 \quad \epsilon_\mu,\epsilon_\lambda\ \in\ \{1,\,-1\}.
\]
\end{lem}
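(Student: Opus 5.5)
The plan is to pin down the two columns of the integral boundary matrix separately: the longitude column by abelianization, and the meridian column by Dehn filling.

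Write the normalized boundary matrix as
\[
 A\ =\ \begin{pmatrix}a&b\\c&d\end{pmatrix}\ \in\ \GL_2(\Z),
\]
in the ordered bases $(\mu_K,\lambda_K)$ and $(\mu_L,\lambda_L)$, using the column convention of Section~\ref{subsec:classical}.

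\emph{Step 1: the longitude column.} The isomorphism $f$ induces a continuous automorphism of the profinite abelianization $\Zh$, which is multiplication by some unit. So there is $\alpha\in\Zh^\times$ with $\widehat\chi_Lf=\alpha\widehat\chi_K$, and this identity is unaffected by the carrying conjugation. Evaluate it on $\lambda_K$ using \eqref{eq:chi-completed}: the left side is $\widehat\chi_L(\mu_L^b\lambda_L^d)=b$ and the right side is $0$. Hence $b=0$ and $A$ is lower triangular. Since $\det A=ad=\pm1$ with $a,d\in\Z$, both $a$ and $d$ lie in $\{1,-1\}$. This already gives $f(\lambda_K)=\lambda_L^{\epsilon_\lambda}$ with $\epsilon_\lambda=d$.

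\emph{Step 2: the meridian column.} It remains to show $c=0$. Since $A$ is integral, Theorem~\ref{thm:relative} gives a homeomorphism $h\colon E_K\to E_L$ whose boundary action is exactly $A$. In particular $h$ carries the meridian slope of $K$ to the slope $(a,c)=(\pm1,c)$ on $T_L$. Filling $E_K$ along $\mu_K$ gives $S^3$, and $h$ extends over the attached solid tori. Hence the Dehn filling of $E_L$ along the slope $(\pm1,c)$ is homeomorphic to $S^3$. By the Gordon--Luecke theorem \cite{GordonLuecke}, the only slope on the boundary of a nontrivial knot exterior whose filling yields $S^3$ is the meridian. Therefore $(\pm1,c)=\pm(1,0)$, which forces $c=0$. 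Then $f(\mu_K)=\mu_L^{\epsilon_\mu}$ with $\epsilon_\mu=a$. Both equalities hold after the single normalizing conjugation, because the normalized boundary map is well defined by \eqref{eq:normalizer}.

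\emph{Main obstacle.} The delicate point is Step 2, which requires realizing $A$ by an actual exterior homeomorphism. That realization is the full force of Theorem~\ref{thm:relative}: JSJ matching, Xu genuineness on the hyperbolic pieces, and graph-block realization. Without it, integrality alone would not exclude shears such as $\left(\begin{smallmatrix}1&0\\2&-1\end{smallmatrix}\right)$. If one wanted to avoid Gordon--Luecke, a fallback would be a profinite argument: show that the quotient of $\widehat{G_L}$ by the closed normal closure of $\mu_L\lambda_L^{c}$ is trivial only when $c=0$ (via Lemma~\ref{lem:quotient-completion}). However, that fallback needs residual properties of the filled manifolds, so I would use the geometric filling argument.
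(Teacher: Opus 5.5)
Your argument is correct. Step 1 is the same as the paper's, but Step 2 takes a different route.

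\textbf{Your route.} You realize $A$ by an exterior homeomorphism via Theorem~\ref{thm:relative} and then transport the meridional filling. This is legitimate: that theorem is proved in Section~\ref{sec:realization} without this lemma, and its hypothesis holds because an integral matrix invertible over $\Zh$ lies in $\GL_2(\Z)$ by \eqref{eq:integral-intersection}.

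\textbf{The paper's route.} The paper stays group-theoretic.
\begin{enumerate}
\item Since $\mu_K$ closed normally generates $\widehat{G_K}$, its image $s=\mu_L^a\lambda_L^c$ closed normally generates $\widehat{G_L}$.
\item Lemma~\ref{lem:quotient-completion} then makes the completion of $\pi_1(E_L(s))$ trivial.
\item Residual finiteness of closed $3$-manifold groups makes $\pi_1(E_L(s))$ itself trivial.
\item The Poincar\'e conjecture gives $E_L(s)\cong S^3$, and Gordon--Luecke finishes exactly as in your proof.
\end{enumerate}
So the ``fallback'' you set aside is precisely the paper's proof. The residual-finiteness input it needs is available (Hempel plus geometrization), and it is used together with Gordon--Luecke, not as a substitute for it.

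\textbf{What each buys.} Your route gets $E_L(s)\cong E_K(\mu_K)=S^3$ for free from $h$, so it avoids Poincar\'e and residual finiteness of the filled manifold. The cost is the entire realization machinery: Theorem~\ref{thm:xu-genuineness} through Lemma~\ref{lem:hyperbolic}, graph-block realization, and collar gluing. The paper's route is what lets it assert that Theorem~\ref{thm:one-recovery} is independent of Theorem~\ref{thm:xu-genuineness}. With your Step 2 that independence is lost, though Theorem~\ref{thm:main} is unaffected, since it invokes Theorem~\ref{thm:relative} anyway.

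Your ``main obstacle'' remark therefore overstates the difficulty. Realization is not needed to exclude shears: the single fact that $f(\mu_K)$ closed normally generates $\widehat{G_L}$ already does it.
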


\begin{proof}
normalize by the common carrying conjugation and write
$A\,=\,\left(\begin{smallmatrix}a&b\\c&d\end{smallmatrix}\right)
\,\in\,\GL_2(\Z)$.
An isomorphism of completed abelianizations is multiplication
by a unit $\alpha$. Compatibility with linking number makes
the first row of $A$ equal to $(\alpha,\,0)$.
Thus, $b\,=\,0$ and $a\,=\,\alpha$ is both an integer and a profinite
unit, so $a\,=\,\pm1$. The determinant condition gives $d\,=\,\pm 1$.
Consequently $f(\lambda_K)\,=\,\lambda_L^d$, and the longitude
axis is already preserved.

The other column is the primitive integer slope
$s\,=\,\mu_L^a\lambda_L^c$.
The source meridian closed normally generates
$\widehat{G_K}$, so its image $s$ closed normally generates
$\widehat{G_L}$. Let $E_L(s)$ denote the closed manifold obtained
by Dehn filling $E_L$ along the unoriented slope of $s$.
The Van Kampen theorem gives
\[
 \pi_1(E_L(s))\ =\
 G_L/\langle\!\langle s\rangle\!\rangle_{G_L}.
\]
Lemma~\ref{lem:quotient-completion} therefore gives
\[
 \widehat{\pi_1(E_L(s))}\ \cong\  \widehat{G_L}/
 \overline{\langle\!\langle s\rangle\!\rangle}_{\widehat{G_L}}\ =\ 1.
\]
Fundamental groups of compact $3$-manifolds are residually finite, using
Hempel's residual-finiteness results and geometrization;
see \cite{HempelRF,PerelmanSurgery} and the modern account
\cite{AFW}. Hence $\pi_1(E_L(s))\,=\,1$.
The Poincar\'e conjecture, proved by Perelman
\cite{PerelmanEntropy,PerelmanSurgery,PerelmanExtinction}, implies
$E_L(s)\,\cong\, S^3$.

Gordon--Luecke's filling theorem
\cite[Theorem~2]{GordonLuecke} says that a nontrivial knot
exterior has no nonmeridional Dehn filling homeomorphic to $S^3$.
Thus $s$ is the meridian slope, so $c\,=\,0$.
Its orientation is one of the two possible signs, giving
$f(\mu_K)\,=\,\mu_L^{\pm 1}$.
\end{proof}

\subsection{Proof of Theorem \ref{thm:one-recovery} of introduction}

\begin{proof}[Proof of Theorem~\ref{thm:one-recovery}]
Compose the given $\Phi$ with $\Inn(g^{-1})$ once.
The new map sends $w_K$ exactly to $w_L$.

If one knot is the unknot, its completion is the abelian group
$\Zh$. A nontrivial knot cannot have abelian completion:
residual finiteness would make its discrete group abelian,
then its cyclic abelianization would make the group $\Z$,
contradicting the fact that its embedded boundary group is $\Z^2$.
Thus, an isomorphism involving the unknot has the unknot on
both sides. Its longitude is $1$ and $w\,=\,\mu$, so the assertion
is immediate.

For nontrivial knots, peripheral recovery is
Corollary~\ref{cor:word-peripheral} with $n\,=\,1$.
Lemma~\ref{lem:word-integrality} makes the normalized boundary
map integral. Lemma~\ref{lem:integral-axes} then gives
\[
 \Phi(\mu_K)\ =\ \mu_L^{\epsilon_\mu},\qquad
 \Phi(\lambda_K)\ =\ \lambda_L^{\epsilon_\lambda}.
\]
The exact word equation is
$\mu_L^{\epsilon_\mu}\lambda_L^{\epsilon_\lambda}
\, =\, \mu_L\lambda_L$.
In the independent coordinates of
$\overline{P_L}\,\cong\,\Zh^2$, it says that
$\epsilon_\mu\,=\,1$ and $\epsilon_\lambda\,=\,1$.
Undoing the one initial inner automorphism gives both original
equalities with the original element $g$.
No new separate conjugators have been introduced.
\end{proof}

\begin{cor}\label{cor:other-word}
For any fixed integer $n\,\ne\,0$, a continuous knot-group
completion isomorphism taking $\mu_K\lambda_K^n$ to
$g\mu_L\lambda_L^ng^{-1}$ preserves $\mu_K,\,\lambda_K$
with the same $g$. Consequently, marking $\mu_K\lambda_K^n$
also recognizes oriented tame knots.
\end{cor}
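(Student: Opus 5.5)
The plan is to repeat the proof of Theorem~\ref{thm:one-recovery}, with $w_{K,n}=\mu_K\lambda_K^n$ in place of $w_K$. Then Theorem~\ref{thm:main} gives recognition.

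First compose $\Phi$ with $\Inn(g^{-1})$, so that $\Phi(w_{K,n})=w_{L,n}$ exactly.

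\emph{Unknot case.} A nontrivial knot has nonabelian completion, as shown in the proof of Theorem~\ref{thm:one-recovery}. So if one knot is the unknot, both are. Then $\lambda=1$ and $w_{K,n}=\mu_K$, and the claim is immediate.

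\emph{Nontrivial knots.} Corollary~\ref{cor:word-peripheral}, which is stated for every $n\neq0$, gives $\Phi(\overline{P_K})=\overline{P_L}$. Lemma~\ref{lem:word-integrality}, also stated for arbitrary $n\ne0$, makes the normalized boundary matrix lie in $\GL_2(\Z)$. Lemma~\ref{lem:integral-axes} then yields $\Phi(\mu_K)=\mu_L^{\epsilon_\mu}$ and $\Phi(\lambda_K)=\lambda_L^{\epsilon_\lambda}$ with signs $\epsilon_\mu,\epsilon_\lambda\in\{1,-1\}$.

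\emph{Fixing the signs.} The exact equation $\Phi(w_{K,n})=w_{L,n}$ reads $\mu_L^{\epsilon_\mu}\lambda_L^{n\epsilon_\lambda}=\mu_L\lambda_L^{n}$ in $\overline{P_L}\cong\Zh^2$. Comparing coordinates gives $\epsilon_\mu=1$ and $n\epsilon_\lambda=n$. Multiplication by the nonzero integer $n$ is injective on $\Zh$, so $\epsilon_\lambda=1$. Undoing $\Inn(g^{-1})$ recovers both equalities with the same $g$.

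\emph{Recognition.} Suppose the completions marked by $\mu\lambda^n$ are isomorphic. The first part gives $\Pk K\cong\Pk L$, and Corollary~\ref{cor:ordered-recognition} gives $K\simeq L$. The converse holds because an oriented equivalence induces an isomorphism carrying the marks.

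The only real work is already done in Lemmas~\ref{lem:word-centralizer} and \ref{lem:word-integrality}. The step I would double-check is that they are genuinely uniform in $n$. In particular, in the prime Seifert root case the vectors $(1,n)$ and $(c_K,1)$ must be independent, which uses $|c_K|\ge2$ so that $nc_K\ne1$. In the composing case, $n\beta=n$ must force $\beta=1$.
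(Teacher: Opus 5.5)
Your proposal is correct and follows the paper's proof essentially verbatim. You normalize by $\Inn(g^{-1})$, invoke the centralizer, integrality and axes lemmas for the given $n$, and fix the signs from $\mu_L^{\epsilon_\mu}\lambda_L^{n\epsilon_\lambda}=\mu_L\lambda_L^n$ using injectivity of multiplication by $n$ on $\Zh$; the unknot case and ordered recognition are handled as in the paper. The uniformity points you flag ($nc_K\neq 1$ because $|c_K|\ge 2$, and $n\beta=n\Rightarrow\beta=1$) are exactly where the paper's lemmas use $n\neq 0$.
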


\begin{proof}
The preceding centralizer and integrality lemmas were proved
for this $n$. After applying the axes lemma, the word equation
gives $\epsilon_\mu\,=\,1$ and
$n\epsilon_\lambda\,=\,n$. Injectivity of multiplication by
$n$ on $\Zh$ gives $\epsilon_\lambda\,=\,1$.
The unknot case is unchanged. Apply ordered recognition.
\end{proof}

\subsection{Proof of Theorem~\ref{thm:main} of introduction}

\begin{proof}[Proof of Theorem~\ref{thm:main}]
Corollary~\ref{cor:ordered-recognition} gives $(i) \Leftrightarrow (iii)$. Since an ordered marked isomorphism preserves the product and thus gives a one-element marked isomorphism, hence $(iii) \Rightarrow (ii)$. For $(ii) \Rightarrow (iii)$. Theorem~\ref{thm:one-recovery} shows that the single marked element recovers the exact ordered pair, giving $(ii)\Rightarrow(iii)$. Thus $(i)$, $(ii)$, and $(iii)$ are equivalent.
\end{proof}

\section{An exact meridian and a closed longitude subgroup}
\label{sec:reduced}

\begin{prop}\label{prop:unit-marking-regularity}
Let $K,\,L$ be nontrivial oriented tame knots. Suppose a continuous
isomorphism $\Phi\,:\,\widehat{G_K}\,\longrightarrow\,\widehat{G_L}$ satisfies, up
to simultaneous conjugacy, the following:
\[
 \Phi(\mu_K)\ =\ \mu_L^\alpha,\quad
 \Phi(\lambda_K)\ =\ \lambda_L^\beta,\quad
 \alpha,\beta\ \in\ \Zh^\times.
\]
Then $\beta\,=\,\epsilon\alpha$ for some $\epsilon\,\in\,\{1,\,-1\}$.
In particular its exterior matrix is unit-regular:
\[
 \diag(\alpha,\,\beta)\ =\ \alpha\diag(1,\,\epsilon).
\]
\end{prop}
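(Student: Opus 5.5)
The plan is to normalize once by the common conjugation, so that $\Phi(\mu_K)=\mu_L^\alpha$ and $\Phi(\lambda_K)=\lambda_L^\beta$ hold exactly. Since $\alpha,\beta$ are units, $\Phi(\overline{P_K})=\overline{P_L}$. Proposition~\ref{prop:jsj} then supplies the rooted JSJ correspondence, and Lemma~\ref{lem:normalization} gives the literal root restriction $\psi$ with exterior matrix $A_0=\diag(\alpha,\beta)$. I will argue by the type of the root, which is preserved by the correspondence, and in each case extract an integral relation between $\alpha$ and $\beta$.

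\emph{Hyperbolic root.} Theorem~\ref{thm:hyp-regular} gives $A_0=\eta B$ with $\eta\in\Zh^\times$ and $B\in\GL_2(\Z)$. Since $A_0$ is diagonal and $\eta$ is a unit, $B$ is diagonal in $\GL_2(\Z)$, so $B=\diag(\epsilon_1,\epsilon_2)$ with signs $\epsilon_i$. Hence $\alpha=\eta\epsilon_1$ and $\beta=\eta\epsilon_2$, and therefore $\beta=\epsilon_1\epsilon_2\alpha$.

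\emph{Prime Seifert root.} By Lemma~\ref{lem:fibre-slope} the fibres are $h_K=\mu_K^{c_K}\lambda_K$ and $h_L=\mu_L^{c_L}\lambda_L$, and the characteristic fibre closure is transported, so $\Phi(h_K)=h_L^u$ with $u\in\Zh^\times$. Comparing coordinates in $\overline{P_L}\cong\Zh^2$ gives $(\alpha c_K,\beta)=(uc_L,u)$. Thus $u=\beta$ and $\alpha c_K=\beta c_L$, that is, $c_K=(\beta\alpha^{-1})c_L$ with both $c$'s nonzero integers. Lemma~\ref{lem:scalar} then forces $\beta\alpha^{-1}=\epsilon\in\{\pm1\}$.

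\emph{Composing root.} Here the marked hypotheses are exactly those used in the proof of Theorem~\ref{thm:constituents}. That result transfers the pair $(\alpha,\beta)$ to every prime summand $K_i\to L_{\sigma(i)}$ with the same exponents. Each summand is nontrivial and prime, so its root is hyperbolic or prime Seifert. Applying the two previous cases to a single summand, say $K_1$, with its induced $\Phi_1$ (which has the same shape up to simultaneous conjugacy) gives $\beta=\epsilon\alpha$. To be careful I would rerun the prime-case arguments for $\Phi_1$ directly: the hypotheses of the proposition are satisfied by $\Phi_1$, and $K_1$ is prime, so there is no circularity.

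The factorization $\diag(\alpha,\beta)=\alpha\diag(1,\epsilon)$ is then immediate. The main obstacle I expect is the hyperbolic case: one needs Theorem~\ref{thm:hyp-regular} to apply to the root restriction with this exterior matrix, which requires the literal restriction from Lemma~\ref{lem:normalization} to agree with the normalized cusp map. The composing case is a reduction rather than a new argument. In the Seifert case the only point to check is that fibre transport acts by a unit power on the chosen representative, which follows from the procyclic generator remark in Section~\ref{subsec:powers}.
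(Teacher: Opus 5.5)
Your proposal is correct and follows the paper's proof essentially step for step: the same hyperbolic-root argument via Theorem~\ref{thm:hyp-regular}, the same fibre comparison $c_K=c_L\beta\alpha^{-1}$ closed by Lemma~\ref{lem:scalar} at a prime Seifert root, and the same reduction of the composite case to a single prime summand through Theorem~\ref{thm:constituents}. The only difference is minor: the paper first shows that $L$ is prime when $K$ is, by applying Theorem~\ref{thm:constituents} to $\Phi^{-1}$, whereas you match the root types directly through the rooted JSJ correspondence, which is equally valid because it preserves piece type and the number of boundary tori.
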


\begin{proof}
Normalize the common conjugation. First suppose that $K$ is prime.
Then $L$ is prime, because otherwise Theorem~\ref{thm:constituents}
applied to $\Phi^{-1}$, with exponents
$\alpha^{-1},\,\beta^{-1}$, would make $K$ composite.
Proposition~\ref{prop:jsj} and
Lemma~\ref{lem:normalization} match the root groups while
retaining the specified exterior matrix.

If the root is hyperbolic, note that
Theorem~\ref{thm:hyp-regular} gives the following:
\[
 \diag(\alpha,\,\beta)\ =\ \eta B,\quad
 \eta\ \in\ \Zh^\times,\quad B\ \in\ \GL_2(\Z).
\]
The off-diagonal entries of $B$ vanish, because $\eta$ is a unit.
A diagonal matrix in $\GL_2(\Z)$ has both diagonal entries in
$\{1,\,-1\}$. Thus
$\alpha=\eta\epsilon_1$ and $\beta=\eta\epsilon_2$, with
$\epsilon_1,\,\epsilon_2\,\in\,\{1,\,-1\}$, and $\beta/\alpha$ is a sign.

If the root is Seifert, then use Lemma~\ref{lem:fibre-slope}.
The characteristic fibre closures give
$\Phi(h_K)\,=\,h_L^u$ for a unit $u$, with
$h_K\,=\,\mu_K^{c_K}\lambda_K$ and
$h_L\,=\,\mu_L^{c_L}\lambda_L$.
Comparison in the embedded exterior boundary group yields
\[
 c_K\alpha\ =\ c_Lu,\qquad \beta\ =\ u,
 \]
 hence $c_K\,=\,c_L\beta\alpha^{-1}$.
The integers $c_K,\,c_L$ are nonzero. The integral scalar lemma
therefore gives $\beta\alpha^{-1}\,\in\,\{1,\,-1\}$.

If $K$ is composite, Theorem~\ref{thm:constituents} produces
matching prime-factor isomorphisms with the same $\alpha$ and
$\beta$. Apply the prime argument to any factor. The asserted
unit-regular matrix follows immediately.
\end{proof}

\subsection{Proof of Theorem~\ref{thm:reduced} of introduction}

\begin{proof}[Proof of the first assertion of Theorem~\ref{thm:reduced}]
Apply Proposition~\ref{prop:unit-marking-regularity} with
$\alpha\,=\,1$. It gives $\beta\,=\,\epsilon\alpha\,=\,\epsilon$ with
$\epsilon\in\{1,\,-1\}$. The argument uses only peripheral
regularity, characteristic Seifert fibres, and prime coherence.
\end{proof}

\begin{proof}[Proof of the second assertion of Theorem~\ref{thm:reduced}]
Let $\Rk K\,\cong\,\Rk L$. If one knot is the unknot,
its longitude subgroup is trivial, whereas a nontrivial knot
has a nontrivial embedded longitude closure. Both knots are
therefore unknots in that case.

For nontrivial knots the isomorphism sends the longitude
generator to a unit power $\lambda_L^\beta$, using the same
carrying conjugation that identifies the meridians. The first
assertion gives $\beta\,=\,1$ or $\beta\,=\,-1$. If $\beta\,=\,1$,
Theorem~\ref{thm:main} gives $K\,\simeq\, L$. If $\beta\,=\,-1$,
compose with the natural reversed-mirror map for $L$ described
in \eqref{eq:dagger-marks}. It fixes the positive meridian and
inverts the preferred longitude. The composite is consequently
an exact ordered marked isomorphism to $L^\dagger$, and
Theorem~\ref{thm:main} gives $K\,\simeq\, L^\dagger$.

Conversely an oriented equivalence preserves the reduced marking,
and the reversed-mirror map preserves the meridian and the
longitude subgroup. This proves both directions.

\end{proof}

The reversed-mirror ambiguity is necessary for the trefoil. For a
composite knot, Theorem~\ref{thm:constituents} forces the same sign on
every prime branch. Thus, the ambiguity is global and at most twofold;
it disappears for knots equivalent to their reversed mirrors.

\section{Finite representations and colouring consequences}
\label{sec:applications}

\begin{proof}[Proof of Corollary~\ref{cor:colouring-main}]
Theorem~\ref{thm:main} identifies oriented knot
equivalence with both $\Wk K\,\cong\,\Wk L$ and $\Pk K\,\cong\,\Pk L$.
Proposition~\ref{prop:finite} with one mark identifies the first
isomorphism with equality of $\Qwk K,\,\Qwk L$ and of all counts
$N_Q(z;K),\,N_Q(z;L)$. With two marks it identifies the second
isomorphism with equality of $\Qk K,\,\Qk L$ and of all
meridian--longitude counts. These are exactly the coefficients
of the colouring polynomials by
\eqref{eq:polynomial-coefficient}. This proves all five
equivalences.
\end{proof}

Equation~\eqref{eq:one-count-from-polynomial} expresses the
one-word counts as sums of polynomial coefficients. The marked
profinite dictionary proves their completeness across all finite groups without inverting that summation in one target. Abelian groups alone cannot see a nontrivial longitude image, since the longitude belongs to the commutator subgroup.

\begin{cor}\label{cor:finite-witness}
If oriented knots $K,\,L$ are inequivalent, a finite
quotient with one marked element occurs for one and not the other,
and there are a finite group $F$ and $z\,\in \,F$ such that
$N_F(z;K)\,\ne\, N_F(z;L)$. There is also a finite ordered
peripheral quotient occurring for just one knot, and a finite
group $Q$ and $x\,\in\, Q$ with $P_Q^x(K)\,\ne\, P_Q^x(L)$.
\end{cor}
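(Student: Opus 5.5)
This is a contrapositive of results already in place, so the plan is to combine Theorem~\ref{thm:main} with the marked dictionary of Proposition~\ref{prop:finite}.

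Suppose $K\not\simeq L$. By Theorem~\ref{thm:main}, (i) fails, so (ii) fails: $\Wk K\not\cong\Wk L$. It is important to use Theorem~\ref{thm:main} here and not only the ordered recognition of Corollary~\ref{cor:ordered-recognition}, because the one-element statement relies on Theorem~\ref{thm:one-recovery}. Similarly (iii) fails, so $\Pk K\not\cong\Pk L$.

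Next apply Proposition~\ref{prop:finite} to $(\widehat{G_K},w_K)$ and $(\widehat{G_L},w_L)$ with one mark. Both groups are topologically finitely generated, since knot groups are finitely presented. A common conjugation of marks is absorbed into the isomorphism, so one-element marked isomorphism in the paper's sense is exactly condition (i) of that proposition. Its failure therefore gives two consequences. First, $\mathcal Q(\widehat{G_K},w_K)\ne\mathcal Q(\widehat{G_L},w_L)$. Second, some count $H(F;z)$ differs.

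These must be translated into discrete language. Continuous maps from $\widehat{G_K}$ to a finite group correspond bijectively to homomorphisms from $G_K$ by the universal property of completion. Continuous surjections correspond to surjections, because $G_K$ is dense. Hence the marked quotient sets equal $\Qwk K$ and $\Qwk L$, so some marked quotient occurs for only one knot. Likewise $H(F;z)=N_F(z;\cdot)$, which gives $N_F(z;K)\ne N_F(z;L)$. This is precisely Corollary~\ref{cor:finite-witness-algebraic}, and I would simply cite it.

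The two-mark case runs the same way. Apply Proposition~\ref{prop:finite} to the ordered pairs $(\mu,\lambda)$. This gives an ordered peripheral quotient in $\Qk K\triangle\Qk L$, and it gives $(Q,x,y)$ whose prescribed-image counts differ. By \eqref{eq:polynomial-coefficient} that count is $\coeff_y(P_Q^x)$, so $P_Q^x(K)\ne P_Q^x(L)$.

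There is no real obstacle. The only point needing care is checking that the discrete sets and counts match the profinite ones, which is the density and universal-property remark above. Everything substantive is contained in Theorem~\ref{thm:main}.
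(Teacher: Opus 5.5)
Your proposal is correct and follows the same route as the paper, whose proof simply combines Theorem~\ref{thm:main} with Corollary~\ref{cor:finite-witness-algebraic} (the contrapositive of Proposition~\ref{prop:finite}) for the one-mark and two-mark cases. Your extra remarks, that dense images and the universal property identify the profinite marked quotients and counts with $\Qwk K$, $\Qk K$, $N_F(z;K)$ and the coefficients in \eqref{eq:polynomial-coefficient}, are exactly the identifications the paper uses implicitly.
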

\begin{proof}
Apply Theorem~\ref{thm:main} and
Corollary~\ref{cor:finite-witness-algebraic}.
\end{proof}

From a specified finite group, the differing  counts can be verified by enumerating generator assignments, checking the relators and evaluating the marked word. Thus enumeration of finite groups will eventually separate inequivalent knots. The argument gives no bound on the order of a distinguishing group. Agreement on finitely many finite groups does not provide a stopping criterion for equivalence. 

\subsection{Connected-sum multiplicativity}
The meridional amalgam in \eqref{eq:meridional-amalgam} explains
Eisermann's product formula
\begin{equation}\label{eq:polynomial-product}
 P_Q^x(K_1\#K_2)\ =\ P_Q^x(K_1)P_Q^x(K_2).
\end{equation}
A representation of the amalgam with meridian image $x$ is
exactly a pair of representations of the factors with that
meridian image. The longitude image is the ordered product of
their longitude images. Expanding the finite sums in $\Z[Q]$
gives the formula \cite[Proposition~2.5 and Corollary~2.6]{Eisermann}.
This identity does not give a unique factorization theorem in
the group ring, which may have zero divisors. Prime-summand
recovery instead comes from the composing-space geometry and
the common-exponent argument of Theorem~\ref{thm:constituents}.

\section{The square and granny knots in finite groups}
\label{sec:finite-example}

For this section, a finite peripheral pair is
$(Q,\,q(P_K))$ for an epimorphism $q\,:\,G_K\,\onto\, Q$.
An isomorphism of such pairs is a group isomorphism carrying one
displayed subgroup onto the other, without selected generators.

We use one trefoil calculation to obtain two finite witnesses:
a peripheral-subgroup image in $A_5\times C_5$, and a
prescribed-image count for $\mu\lambda$ in $A_5$. The full
verification program is included in Appendix~\ref{sec:script}.
Both calculations are elementary and independent of the
geometric realization theorems.

\subsection{A cyclic extension converts longitude order into subgroup order}

A group $F$ is \emph{perfect} if $[F,\,F]\,=\,F$; equivalently its
abelianization is trivial. Let $C_n\,=\,\langle z\,\,\big\vert\,\, z^n\,=\,1\rangle$
denote the cyclic group of order $n$.

\begin{prop}\label{prop:cyclic-lift}
Let $\rho\,:\,G_K\,\onto\, F$ be an epimorphism to a finite perfect group.
Put $a\,=\,\rho(\mu_K)$ and $b\,=\,\rho(\lambda_K)$, and let $n$ be a
positive multiple of the order of $a$. Define
\[
 q\ :\ G_K\ \longrightarrow\ F\times C_n,\quad
 g\ \longmapsto\ \bigl(\rho(g),\,z^{\chi_K(g)}\bigr).
\]
Then $q$ is surjective, and
\[
 q(P_K)\ =\ \langle(a,z),(b,1)\rangle
\ \cong\ C_n\times C_{\operatorname{ord}(b)}.
\]
In particular, $|q(P_K)|\,=\,n\,\operatorname{ord}(b)$.
\end{prop}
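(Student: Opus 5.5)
The plan is to verify three things: that $q$ is a homomorphism, that it is onto, and that the image of $P_K$ has the stated structure. The homomorphism property is immediate, since $\chi_K$ is a homomorphism to $\Z$ and $z^{(\cdot)}$ is a homomorphism $\Z\to C_n$. We also record the boundary values from \eqref{eq:chi-discrete}: $q(\mu_K)=(a,z)$ and $q(\lambda_K)=(b,1)$. Because $P_K=\langle\mu_K,\lambda_K\rangle$, the image $q(P_K)$ is the subgroup generated by these two elements.

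\emph{Surjectivity.} Let $S=q(G_K)\le F\times C_n$. Projection to $F$ is onto, because $\rho$ is onto. Projection to $C_n$ is onto, because $(a,z)\in S$. Let $m=\operatorname{ord}(a)$, which divides $n$ by hypothesis. Then $(a,z)^m=(1,z^m)\in S$. This gives $\{1\}\times\langle z^m\rangle\le S$, but it does not yet give all of $\{1\}\times C_n$.

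The perfectness of $F$ is what closes this gap, and I expect it to be the only real point. Consider the kernel $S_F=\{f\,:\,(f,1)\in S\}$. It is normal in $F$, because $S$ surjects onto $F$ and $C_n$ is abelian, so conjugating $(f,1)$ by any lift in $S$ keeps the second coordinate equal to $1$. The induced Goursat isomorphism $F/S_F\cong C_n/S_{C}$ has abelian target, so $[F,F]\le S_F$, and perfectness gives $S_F=F$. Hence $F\times\{1\}\le S$. Multiplying by $(a,z)$ then gives $(1,z)\in S$, so $S=F\times C_n$. More directly, every commutator $[(f,c),(f',c')]=([f,f'],1)$ lies in $S$, so $[F,F]\times 1\le S$.

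\emph{Structure of $q(P_K)$.} The elements $(a,z)$ and $(b,1)$ commute, since $\mu_K$ and $\lambda_K$ commute. So the group $H=\langle(a,z),(b,1)\rangle$ is abelian and is a quotient of $\langle(a,z)\rangle\times\langle(b,1)\rangle$. The order of $(a,z)$ is $\operatorname{lcm}(m,n)=n$, and the order of $(b,1)$ is $\operatorname{ord}(b)$. The product map is injective: if $(a,z)^i=(b,1)^{-j}$, then comparing second coordinates gives $z^i=1$, so $n\mid i$, and the left side is trivial. Hence $H\cong C_n\times C_{\operatorname{ord}(b)}$, and $|q(P_K)|=n\operatorname{ord}(b)$.
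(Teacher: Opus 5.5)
Your proof is correct and follows essentially the same route as the paper. Surjectivity comes from perfectness of $F$ together with the fact that all commutators in $F\times C_n$ lie in $F\times\{1\}$; your closing ``more directly'' remark is exactly the paper's argument, and the Goursat framing is an equivalent repackaging. The structure of $q(P_K)$ comes, as in the paper, from the trivial intersection of $\langle(a,z)\rangle$ and $\langle(b,1)\rangle$, detected in the $C_n$ coordinate.
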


\begin{proof}
Both coordinate maps are homomorphisms. Write $H\,=\,q(G_K)$.
Its projection to $F$ is surjective, so its commutator subgroup
projects onto $[F,\,F]\,=\,F$. But every commutator in $F\times C_n$
lies in $F\times\{1\}$. Thus $[H,\,H]\,=\,F\times\{1\}$.
The second projection of $H$ is also surjective because
$\chi_K(\mu_K)\,=\,1$, so $H\,=\,F\times C_n$.

The longitude has $\chi_K$-value zero, giving the displayed
peripheral generators. They commute because the original two
marks commute. The element $(a,\,z)$ has order $n$, and its cyclic
group has trivial intersection with
$\langle(b,\,1)\rangle$: an element of the former with second
coordinate $1$ has exponent divisible by $n$ and is the identity.
The two commuting cyclic groups therefore give their direct
product.
\end{proof}

This construction converts a difference in longitude orders into a difference in the orders of peripheral subgroup images.
To distinguish the knots using the subgroup image with no chosen generators, one must
consider all possible meridian images. We do so for the square and granny knots below.

\subsection{The six trefoil maps with one fixed meridian}

The $A_5$ trefoil polynomial and its six colourings are already
Eisermann's \cite[Example~1.3]{Eisermann}. We give the presentation calculation and complete list to derive the connected-sum counts and the counts with prescribed image of $\mu\lambda$.

Let $T$ be a right-handed trefoil and $T^\dagger$ its reversed
mirror. Put $K_g\,=\,T\#T$ and $K_s\,=\,T\#T^\dagger$, with the indicated
orientations; these are the granny and square knots. We use
\[
 G_T\ =\ \langle x,\,y\,\,\big\vert\,\, x^2\,=\,y^3\rangle,\quad
 \mu\ =\ xy^{-1},\quad \lambda\ =\ x^2\mu^{-6}.
\]
The normalization agrees with the positive trefoil fibre
$\mu^6\lambda\,=\,x^2$. For the reversed mirror we identify its
abstract group with this presentation while replacing the
longitude by $\lambda^{-1}$ and retaining the meridian.

The alternating group $A_5$ is the group of even permutations of
five symbols; it has order $60$, is simple and is perfect.
Cycle notation $(i_1\cdots i_k)$ sends each listed symbol to the
next and the last to the first; products of permutations act
from right to left. Fix $a\,=\,(1\,2\,3\,4\,5)$.
For a map with $\mu\,\longmapsto\, a$, its values satisfy
$x\,=\,ay$ and $(ay)^2\,=\,y^3$. There are exactly six choices, as follows.
\begin{center}
\begin{tabular}{@{}lll@{}}
\toprule
$x$ & $y$ & Image group\\
\midrule
$a^3$ & $a^2$ & $\langle a\rangle$\\
$(1\,2)(4\,5)$ & $(2\,5\,3)$ & $A_5$\\
$(1\,2)(3\,4)$ & $(2\,5\,4)$ & $A_5$\\
$(1\,5)(3\,4)$ & $(1\,4\,2)$ & $A_5$\\
$(1\,5)(2\,3)$ & $(1\,4\,3)$ & $A_5$\\
$(2\,3)(4\,5)$ & $(1\,5\,3)$ & $A_5$\\
\bottomrule
\end{tabular}
\end{center}
Here is a direct way to check completeness of the list. The
element $h\,=\,x^2\,=\,y^3$ centralizes $x,\,y$, hence $a$. The centralizer
of a $5$-cycle in $A_5$ is $\langle a\rangle$, so $h\,=\,a^e$.
If $e\,\ne\,0$ modulo $5$, both $x,\,y$ centralize a generator of
this cyclic group and belong to it. Solving $x\,=\,ay$ and
$x^2\,=\,y^3$ gives the single cyclic solution $x\,=\,a^3,\,y\,=\,a^2$.
If $e\,=\,0$, then $x^2\,=\,y^3\,=\,1$. Since $xy^{-1}$ has order $5$,
$x$ has order $2$ and $y$ order $3$. Checking the $20$ possible
$3$-cycles $y$ gives precisely the five noncyclic rows above.
They generate $A_5$: the generated group contains elements of
orders $2,3,5$, so its order is $30$ or $60$, and an index-two
subgroup of the simple group $A_5$ is impossible. The program
in Appendix~\ref{sec:script} checks all $60$ values of $y$ and
verifies the generated subgroup orders independently.

In the cyclic row $\lambda$ maps to $1$. In each of the other
five rows it maps to $a^{-6}=a^{-1}$. For $T^\dagger$ the
five corresponding longitude images are $a$.

\subsection{Connected-sum counts and peripheral images}
A connected-sum map with fixed meridian image $a$ is a choice
of one of the six maps on each factor. Exactly one of the $36$
choices has both factors cyclic. The remaining $35$ are
surjective to $A_5$, since at least one factor already surjects.
The longitude images are products, giving this table:
\begin{center}
\begin{tabular}{@{}>{\raggedright\arraybackslash}p{0.37\textwidth}cll@{}}
\toprule
Factor images & Number & Granny longitude & Square longitude\\
\midrule
First full, second cyclic & $5$ & $a^{-1}$ & $a^{-1}$\\
First cyclic, second full & $5$ & $a^{-1}$ & $a$\\
Both full & $25$ & $a^{-2}$ & $1$\\
\bottomrule
\end{tabular}
\end{center}
The table follows by multiplying the longitude images from the six trefoil maps.

Lift these epimorphisms to $Q\,=\,A_5\times C_5$ by
Proposition~\ref{prop:cyclic-lift}. All have meridian image
$(a,\,z)$. For the granny knot every such peripheral image has
order $25$. For the square knot, $25$ of the maps have longitude
image $1$ and peripheral image of order $5$.

\begin{thm}\label{thm:granny-square}
There is a quotient $q\,:\,G_{K_s}\,\onto\, A_5\times C_5$ whose
peripheral image has order $5$. There is no quotient
$G_{K_g}\onto A_5\times C_5$ with peripheral image of order $5$.
Thus the sets of finite quotient groups equipped with the image
of the peripheral subgroup distinguish the square and granny knots.
\end{thm}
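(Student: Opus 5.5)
The existence half follows from the tables above. Take any of the $25$ connected-sum maps $G_{K_s}\to A_5$ with meridian image $a$ in which both factors are full. Such a map is surjective because at least one factor already surjects, and its longitude image is $a^{-1}a=1$. Proposition~\ref{prop:cyclic-lift} with $n=5=\operatorname{ord}(a)$ lifts it to a surjection $q:G_{K_s}\onto A_5\times C_5$. Its peripheral image is $\langle(a,z),(1,1)\rangle$, of order $5\cdot 1=5$.

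For nonexistence in the granny knot, I would let $q:G_{K_g}\onto A_5\times C_5$ be arbitrary and analyse its two projections.

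\emph{Second projection.} The map $G_{K_g}\to C_5$ factors through abelianization, so it is $g\mapsto z^{k\chi(g)}$. Surjectivity forces $k\not\equiv0 \pmod 5$.

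\emph{First projection.} Call it $\rho:G_{K_g}\to A_5$. Its kernel-image argument as in Proposition~\ref{prop:cyclic-lift} shows that $\rho$ is onto: $[q(G),q(G)]$ projects onto $[A_5,A_5]=A_5$ and lies in $A_5\times 1$. Then $\rho(\mu)$ is nontrivial, since $\mu$ normally generates. So $\rho(\mu)$ has order $2$, $3$ or $5$.

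\emph{Main obstacle.} The tables only cover the case $\rho(\mu)$ a $5$-cycle, so the cases of order $2$ and $3$ must be ruled out or handled. I would handle them by order counting. The peripheral image contains $q(\mu)=(\rho(\mu),z^k)$, whose order is $\operatorname{lcm}(\operatorname{ord}\rho(\mu),5)$. This is $10$ or $15$ unless $\rho(\mu)$ has order $5$. So if the peripheral image has order $5$, then $\rho(\mu)$ is a $5$-cycle. Moreover $q(\lambda)=(\rho(\lambda),1)$ must lie in $\langle q(\mu)\rangle$ with trivial second coordinate, which forces $\rho(\lambda)=1$.

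\emph{Reduction to the table.} All $5$-cycles in $A_5$ lie in two conjugacy classes, those of $a$ and $a^2$. Conjugating $\rho$ in $A_5$ does not change the order of the image, so we may take $\rho(\mu)\in\{a,a^2\}$.
\begin{itemize}
\item If $\rho(\mu)=a$, the table says every surjection has granny longitude image $a^{-1}$ or $a^{-2}$, never $1$. This is a contradiction.
\item If $\rho(\mu)=a^2$, apply the outer automorphism of $A_5$ given by conjugation by a suitable element of $S_5$ carrying $a^2$ to $a$. Alternatively, redo the six-map analysis with $a$ replaced by $a^2$: the same computation gives longitude images $a^{-2}$ per full factor, hence $a^{-2}$ or $a^{-4}$, again nontrivial.
\end{itemize}
Since automorphisms preserve orders, in either case $\rho(\lambda)\ne1$, contradicting the previous step. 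Hence no quotient of $G_{K_g}$ onto $A_5\times C_5$ has peripheral image of order $5$.

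\emph{Conclusion.} The pair $(A_5\times C_5,\text{subgroup of order }5)$ occurs as a finite peripheral pair for $K_s$ but not for $K_g$, so the sets of finite quotient groups with peripheral image distinguish the two knots. The step I expect to need the most care is the reduction to $\rho(\mu)=a$ together with the surjectivity of the lifted cyclic factor; the rest is direct reading of the tables.
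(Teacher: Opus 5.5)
Your proposal is correct and follows essentially the paper's proof. It uses the same both-full square witness lifted via Proposition~\ref{prop:cyclic-lift}, and for the granny knot the same reduction to a $5$-cycle meridian normalized to $a$ by a (possibly odd) permutation, after which the table's longitude images $a^{-1},a^{-2}$ give the contradiction. The differences are only cosmetic: you split the $5$-cycles into their two $A_5$-classes before conjugating by $S_5$, and you derive $\rho(\lambda)=1$ directly instead of showing that the peripheral image would have order $25$.
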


\begin{proof}
The square witness is any lifted map for which both factors are
full. Suppose a granny epimorphism to $A_5\times C_5$ had
peripheral image of order $5$. Its $A_5$ projection is
surjective, so the meridian image there is nonidentity:
a meridian normally generates the knot group. Its order
divides $5$, hence it is a $5$-cycle. The $C_5$ projection
sends the meridian to a generator, since the map is surjective
and the knot abelianization is generated by that meridian.
Conjugation by a permutation of the five symbols carries any
$5$-cycle to $a$ and is an automorphism of $A_5$; an automorphism
of $C_5$ carries that generator to $z$.

After applying these target automorphisms we are in the normalized
situation of the table. The cyclic component is exactly
$\chi_{K_g}$ modulo $5$, and every full $A_5$ map has longitude
$a^{-1}$ or $a^{-2}$. Its peripheral image therefore has order
$25$, a contradiction. Subgroup order is preserved by every
isomorphism of the finite target, so the distinction does not
require any named generators.
\end{proof}

\subsection{A one-element count in \texorpdfstring{$A_5$}{A5}}

The same six-map calculation gives a direct example for the
new one-element marking.

\begin{prop}\label{prop:one-word-A5}
For the oriented granny and square knots fixed above,
\[
 N_{A_5}(1;K_g)\ =\ 241,\qquad N_{A_5}(1;K_s)\ =\ 121.
\]
Among epimorphisms to $A_5$, the respective counts with
$\rho(\mu\lambda)=1$ are $240$ and $120$.
\end{prop}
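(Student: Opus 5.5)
The plan is to count homomorphisms $\rho\colon G_{K}\to A_5$ by the meridian image $x=\rho(\mu)$, using the meridional amalgam \eqref{eq:meridional-amalgam}. A map from $G_{T\#T}$ or $G_{T\#T^\dagger}$ with meridian image $x$ is exactly a pair of trefoil maps with meridian image $x$. Its longitude image is the product $\lambda_1\lambda_2$ of the factor longitude images, with $\lambda_2$ replaced by its inverse for the reversed mirror factor. The condition $\rho(\mu\lambda)=1$ then reads $\lambda_1\lambda_2=x^{-1}$. So
\[
 N_{A_5}(1;K)\;=\;\sum_{x\in A_5}\#\bigl\{(\rho_1,\rho_2)\,:\,\rho_i(\mu)=x,\ \rho_1(\lambda)\rho_2(\lambda)^{\pm1}=x^{-1}\bigr\},
\]
and I split this sum by the order of $x$: it is $1$, $2$, $3$ or $5$.

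\emph{Order $1$.} Since the meridian normally generates $G_K$, the only map is the trivial one. It has $\lambda\mapsto 1=x^{-1}$, so it contributes $1$ to each knot. This map is the unique non-epimorphism counted, which explains the difference $241-240$ and $121-120$.

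\emph{Order $5$.} The $24$ five-cycles are all conjugate to $a=(1\,2\,3\,4\,5)$ under $S_5$. Conjugation by $S_5$ is an automorphism of $A_5$, and it preserves the count attached to $x$ (the condition transforms equivariantly). So each five-cycle contributes the same amount as $a$, and I read that amount off the connected-sum table.
\begin{itemize}
\item \emph{Granny knot.} We need longitude $a^{-1}$. This occurs in the rows ``first full, second cyclic'' and ``first cyclic, second full'', giving $5+5=10$; the case with both factors cyclic gives longitude $1\neq a^{-1}$.
\item \emph{Square knot.} Only the first row gives $a^{-1}$, giving $5$.
\end{itemize}
All these maps are surjective, since one factor is full. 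This yields $24\cdot10=240$ and $24\cdot5=120$ epimorphisms.

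\emph{Orders $2$ and $3$.} Here I must show that no pair satisfies $\lambda_1\lambda_2^{\pm1}=x^{-1}$; I expect this to be the main obstacle. The longitude images lie in $C_{A_5}(x)$ and in the commutator subgroup of the trefoil image. The trefoil image is generated by conjugates of $x$ satisfying the braid relation. For an involution this image is cyclic or a copy of $S_3$, where the centralizer of $x$ meets the commutator subgroup $C_3$ trivially. Hence $\lambda_i=1\neq x^{-1}$, and these $x$ contribute $0$.

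For a $3$-cycle $x$ we have $C_{A_5}(x)=\langle x\rangle$, so each $\lambda_i$ is a power of $x$, and a mere order count is insufficient. I would first classify the trefoil maps with $\mu\mapsto x$ exactly as in the six-map table: set $h=x_T^2=y_T^3$, which is central in the image, and solve $x_T=xy_T$ and $x_T^2=y_T^3$. The images are cyclic, $A_4$, or $A_5$. In the $A_4$ case the commutator subgroup $V_4$ meets $\langle x\rangle$ trivially, so $\lambda=1$. For any $A_5$-image solutions, I would compute $\lambda=x_T^2x^{-6}=x_T^2$ directly and check that it is $1$. Then the products never equal $x^{-1}$. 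Because this case analysis is finite but fiddly, I would confirm it, along with the totals $241$ and $121$, by the exhaustive enumeration in Appendix~\ref{sec:script}, which checks all $60^2$ assignments of $(x_T,y_T)$ on each factor.

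Summing the four classes gives $N_{A_5}(1;K_g)=1+240=241$ and $N_{A_5}(1;K_s)=1+120=121$. Discarding the trivial map gives the epimorphism counts $240$ and $120$.
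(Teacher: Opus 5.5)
Your proof is correct, and most of it matches the paper: you split by the order of $\rho(\mu)$, the trivial map accounts for order $1$, and the order-$5$ contribution is read off the connected-sum table and transported by $S_5$-conjugation, exactly as the paper does. The difference is in how you handle orders $2$ and $3$.

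The paper never identifies the trefoil image. It only uses that $h=x^2=y^3$ lies in $C_{A_5}(\rho(\mu))$.
\begin{itemize}
\item For an involution this centralizer is a Klein four group, so $h=1$, because $A_5$ has no element of order $4$.
\item For a $3$-cycle it is $\langle\rho(\mu)\rangle$, so either $h=1$ or the image is cyclic.
\end{itemize}
Then $\rho(\lambda)=h\,\rho(\mu)^{-6}=1$ whenever $h=1$, while a cyclic image kills the longitude.

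You instead use the Wirtinger braid relation to classify the image. Two involutions satisfying it have product of order dividing $3$, giving $C_2$ or $S_3$. Conjugate $3$-cycles give $C_3$, $A_4$ or $A_5$. You then intersect $C_{A_5}(x)$ with the commutator subgroup of the image. This is more structural, but it leaves an $A_5$-image case that you defer to the computer enumeration. That deferral is unnecessary, because you already observed that $h$ is central in the image. An $A_5$ image therefore forces $h\in Z(A_5)=1$, and hence $\rho(\lambda)=1$. In fact the case is empty: $h=1$ makes the image a quotient of the $(2,3,3)$ triangle group $A_4$.

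The paper's centralizer argument is shorter and treats both orders uniformly. It needs no subgroup classification and no appeal to the appendix program.
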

\begin{proof}
Fix a meridian image $a$ of order $5$.
Multiplying each longitude in the preceding table by $a$
shows that $w$ has image $1$ for $10$ granny maps and $5$
square maps: in the square case only the first-full,
second-cyclic row contributes. These maps are all surjective.
The one additional pair with both factors cyclic has longitude
$1$ and hence word image $a$, not $1$.
There are $24$ elements of order $5$ in $A_5$.
Conjugation by a permutation, possibly odd, induces an
automorphism of $A_5$ carrying any chosen $5$-cycle to $a$,
so the count is the same for all of them. This contributes
$24\cdot10=240$ and $24\cdot5=120$.

We check that the other meridian orders add only the trivial
representation. A meridian image $1$ makes the whole map
trivial by normal generation. Suppose its order is $2$ or $3$
and consider either trefoil factor. Put $h=x^2=y^3$ as above;
it centralizes the meridian image. For an element of order $2$,
its centralizer in $A_5$ is a Klein four group, namely
$C_2\times C_2$, whose nonidentity elements all have order $2$.
But no square in $A_5$ has order $2$, since $A_5$ has no
element of order $4$. Hence $h=1$.
For a meridian of order $3$, its centralizer is its cyclic
group of order $3$. If $h\ne1$, both $x,y$ centralize its
generator and the image is cyclic; otherwise $h=1$ already.
In the cyclic case the preferred longitude is trivial because
it belongs to the commutator subgroup. In the other cases
$\lambda=h\,\mu^{-6}=1$.
Thus both summand longitudes are trivial for these meridian
orders and $w$ has the nonidentity meridian image.
The only remaining contribution to $N_{A_5}(1;-)$ is therefore
the one trivial representation, giving $241$ and $121$.
\end{proof}

The complete program in Appendix~\ref{sec:script} enumerates all
meridian images as an independent check. Both knots have $2220$
homomorphisms to $A_5$ in total, but the distributions of the
image of $w$ differ. Thus forgetting all prescribed word images
loses the distinction in this target.

\appendix
\section{The cusped input and finite verification}
\subsection{Outline of Xu's cusped genuineness argument}\label{subsec:xu-cusped-proof}
The proof of \cite[Proposition~16.3]{Xu26} first matches integral cusp
curves $a_i,b_i$ by peripheral regularity. For large $n$, the quotients
\[
 \pi_1(M)/\langle\!\langle a_1^{n!},\ldots,a_k^{n!}\rangle\!\rangle,
 \qquad
 \pi_1(N)/\langle\!\langle b_1^{n!},\ldots,b_k^{n!}\rangle\!\rangle
\]
are closed hyperbolic orbifold filling groups, and the given profinite
map descends to their completions. These fillings impose powers of
slopes, giving finite cyclic local groups on the attached cores.
Xu's uniform-lattice theorem \cite[Theorem~15.3]{Xu26} makes the
descended maps genuine. Mostow rigidity gives orbifold isometries;
drilling their singular cores gives homeomorphisms $M\to N$.

Finiteness of their isotopy classes gives an infinite subsequence
inducing one discrete isomorphism $\alpha$, up to conjugacy. Every
finite quotient factors through all sufficiently late factorial
fillings, since its element orders divide $n!$. Thus the given map and
$\widehat\alpha$ agree up to conjugacy on cofinal finite characteristic
quotients. Compactness supplies one common conjugator
\cite[Lemma~6.3 and Corollary~6.4]{Xu26}. This recovers the particular
profinite isomorphism. Orbifold power relations, unlike ordinary
manifold filling relations, eventually vanish in every finite quotient
\cite[Remark~16.6]{Xu26}.

\subsection{Verification program}\label{sec:script}

The following standard-library Python program enumerates all trefoil
maps and connected-sum representations in $A_5$. Permutations are tuples
on $\{0,1,2,3,4\}$, and multiplication composes right to left. The
subgroup routine exhausts the finite subgroup generated by its inputs.
The assertions check the six fixed-meridian maps, both longitude
distributions, the peripheral orders from Proposition~\ref{prop:cyclic-lift},
and all one-word counts.
\begin{lstlisting}
from itertools import permutations
from collections import Counter

identity = tuple(range(5))

def multiply(p, q):
    return tuple(p[q[i]] for i in range(5))

\end{lstlisting}
\Needspace{17\baselineskip}
\begin{lstlisting}
def power(p, n):
    if n < 0:
        inverse = [0] * 5
        for i, j in enumerate(p):
            inverse[j] = i
        return power(tuple(inverse), -n)
    result = identity
    while n:
        if n & 1:
            result = multiply(result, p)
        p = multiply(p, p)
        n //= 2
    return result

def even(p):
    return sum(p[i] > p[j] for i in range(5)
               for j in range(i + 1, 5)) % 2 == 0

def generated(generators):
    subgroup, todo = {identity}, [identity]
    while todo:
        g = todo.pop()
        for h in generators:
            gh = multiply(g, h)
            if gh not in subgroup:
                subgroup.add(gh)
                todo.append(gh)
    return subgroup

def order(p):
    n, value = 1, p
    while value != identity:
        value = multiply(value, p)
        n += 1
    return n

A5 = [p for p in permutations(range(5)) if even(p)]
a = (1, 2, 3, 4, 0)  # (1 2 3 4 5)

def trefoil_maps(m):
    maps = []
    for y in A5:
        x = multiply(m, y)  # mu = x y^(-1)
        if power(x, 2) == power(y, 3):
            b = multiply(power(x, 2), power(m, -6))
            maps.append((x, y, b))
    return maps

fixed = trefoil_maps(a)
assert len(fixed) == 6
assert Counter(len(generated((x, y))) for x, y, b in fixed) == {5: 1, 60: 5}
assert Counter(b for x, y, b in fixed) == {identity: 1, power(a, -1): 5}

\end{lstlisting}
\Needspace{8\baselineskip}
\begin{lstlisting}
word = [Counter(), Counter()]
epi_word = [Counter(), Counter()]
fixed_longitude = [Counter(), Counter()]
for m in A5:
    maps = trefoil_maps(m)
    for x1, y1, b1 in maps:
        for x2, y2, b2 in maps:
            onto = len(generated((x1, y1, x2, y2))) == 60
            longitudes = (multiply(b1, b2), multiply(b1, power(b2, -1)))
            for i, b in enumerate(longitudes):
                w = multiply(m, b)
                word[i][w] += 1
                if onto:
                    epi_word[i][w] += 1
                if m == a:
                    fixed_longitude[i][b] += 1

g, s = fixed_longitude
assert g == Counter({power(a, -1): 10, power(a, -2): 25})
assert s == Counter({power(a, -1): 5, a: 5, identity: 25})
assert sum(g.values()) == sum(s.values()) == 35
assert {5 * order(b) for b in g} == {25}
assert {5 * order(b) for b in s} == {5, 25}
assert [c[identity] for c in word] == [241, 121]
assert [c[identity] for c in epi_word] == [240, 120]
assert [sum(c.values()) for c in word] == [2220, 2220]
print("Identity word counts:", [c[identity] for c in word])
print("Surjective identity counts:", [c[identity] for c in epi_word])
print("Total representations:", [sum(c.values()) for c in word])
\end{lstlisting}
The output is:
\begin{verbatim}
Identity word counts: [241, 121]
Surjective identity counts: [240, 120]
Total representations: [2220, 2220]
\end{verbatim}

\end{document}